\documentclass[11pt]{amsart}

\textwidth 16.00cm
\textheight 20cm
\topmargin 0.0cm
\oddsidemargin 0.0cm

\evensidemargin 0.0cm
\parskip 0.0cm

\usepackage{amssymb}
\usepackage{mathrsfs}
\usepackage{amsfonts}
\usepackage{latexsym,amsmath,amsthm,amssymb,amsfonts}
\usepackage[usenames]{color}
\usepackage{xspace,colortbl}
\usepackage{graphicx}
\usepackage{tipa}

\newcommand{\be}{\begin{equation}}
\newcommand{\ee}{\end{equation}}
\newcommand{\beq}{\begin{eqnarray}}
\newcommand{\eeq}{\end{eqnarray}}

\newtheorem{prop}{Proposition}[section]

\newtheorem{remark}[prop]{Remark}

\def\begeq{\begin{equation}}
\def\endeq{\end{equation}}

\def\tr{{\rm tr}}

\def\odot{\setbox0=\hbox{$\bigcirc$}\relax \mathbin {\hbox
to0pt{\raise.5pt\hbox to\wd0{\hfil $\wedge$\hfil}\hss}\box0 }}

\numberwithin{equation} {section}

\numberwithin{equation}{section}
\textheight=8.8in
\textwidth=6.28in
\topmargin=0mm
\oddsidemargin=0mm
\evensidemargin=0mm

\newtheorem{theorem}{\bf Theorem}[section]
\newtheorem{proposition}[theorem]{\bf Proposition}

\newtheorem{lemma}[theorem]{\bf Lemma}

\newtheorem{corollary}[theorem]{\bf Corollary}

\allowdisplaybreaks

\begin{document}

\title[Inverse Gauss curvature flow in a time cone of Lorentz-Minkowski space]
 {Inverse Gauss curvature flow in a time cone of Lorentz-Minkowski space $\mathbb{R}^{n+1}_{1}$}

\author{
 Ya Gao,\quad Jing Mao$^{\ast}$}

\address{
 Faculty of Mathematics and Statistics, Key Laboratory of
Applied Mathematics of Hubei Province, Hubei University, Wuhan
430062, China. }

\email{Echo-gaoya@outlook.com, jiner120@163.com}

\thanks{$\ast$ Corresponding author}

\date{}
\begin{abstract}
In this paper, we consider the evolution of spacelike graphic
hypersurfaces defined over a convex piece of hyperbolic plane
$\mathscr{H}^{n}(1)$, of center at origin and radius $1$, in the
$(n+1)$-dimensional Lorentz-Minkowski space $\mathbb{R}^{n+1}_{1}$
along the inverse Gauss curvature flow (i.e., the evolving speed
equals the $(-1/n)$-th power of the Gaussian curvature) with the
vanishing Neumann boundary condition, and prove that this flow
exists for all the time. Moreover, we can show that, after suitable
rescaling, the evolving spacelike graphic hypersurfaces converge
smoothly to a piece of the spacelike graph of a positive constant
function defined over the piece of $\mathscr{H}^{n}(1)$ as time
tends to infinity.
\end{abstract}

\maketitle {\it \small{{\bf Keywords}: Inverse Gauss curvature flow,
spacelike hypersurfaces, Lorentz-Minkowski space, Neumann boundary
condition.}

{{\bf MSC 2020}: Primary 53E10, Secondary 35K10.}}

\section{Introduction}

The changing shape of a tumbling stone subjected to collisions from
all directions with uniform frequency can be modeled by the motion
of convex surfaces by their Gauss curvature, which was firstly
introduced by Firey \cite{wjf}. Under the assumption of some
existence and regularity of solutions, Firey \cite{wjf} showed that
surfaces which are symmetric about the origin contract to points,
becoming spherical in shape in the evolution process, and he also
conjectured that the result should hold without any symmetry
assumption. 25 years later, this Firey's conjecture was completely
solved by Andrews \cite{ba}. This motion is called \emph{Gauss
curvature flow} (GCF for short) and its importance can be seen from
this well-known history. The study of GCF was intensively carried
out and many other interesting results have been obtained -- see,
e.g., \cite{ba2,ksc,bc,bc2,rsh,lo} and references therein.

Gerhardt \cite{Ge90} (or Urbas \cite{Ur}) firstly considered the
evolution of compact, star-shaped $C^{2,\alpha}$-hypersurfaces
$\mathcal{W}_{0}^{n}$ in the $(n+1)$-dimensional ($n\geq2$)
Euclidean space $\mathbb{R}^{n+1}$ given by
$X_{0}:\mathbb{S}^{n}\rightarrow\mathbb{R}^{n+1}$ along the flow
equation
\begin{eqnarray} \label{ICF-1}
\frac{\partial}{\partial t}X=\frac{1}{F}\nu,
\end{eqnarray}
where $F$ is a positive, symmetric, monotone, homogeneous of degree
one, concave function w.r.t. principal curvatures of the evolving
hypersurfaces
$\mathcal{W}^{n}_{t}=X(\mathbb{S}^{n},t)=X_{t}(\mathbb{S}^{n})$,
\footnote{In this paper, $\mathbb{S}^{n}$ stands for the unit
Euclidean $n$-sphere.}and $\nu$ is the outward unit normal vector of
$\mathcal{W}^{n}_{t}$. They separately proved that the flow exists
for all the time, and, after suitable rescaling, converge
exponentially fast to a uniquely determined sphere of prescribed
radius. This flow is called inverse curvature flow (ICF for short),
and clearly, $F=H$ (the mean curvature) and $F=K^{1/n}$ (the $n$-th
root of the Gaussian curvature) are allowed in this setting, the
flow equation separately become $\partial X/\partial t=\nu/H$ and
$\partial X/\partial t=\nu/K^{1/n}$, which are exactly the IMCF
equation and the inverse Gauss curvature flow (IGCF) equation,
respectively.

The reason why geometers are interested in the study of the theory
of ICFs is that it has important applications in Physics and
Mathematics. For instance, by defining a notion of weak solutions to
IMCF, Huisken and Ilmanen \cite{hi1,hi2} proved the Riemannian
Penrose inequality by using the IMCF approach, which makes an
important step to possibly and completely solve the famous Penrose
conjecture in the General Relativity.  Also using the method of
IMCF, Brendle, Hung and Wang \cite{bhw} proved a sharp Minkowski
inequality for mean convex and star-shaped hypersurfaces in the
$n$-dimensional ($n\geq3$) anti-de Sitter-Schwarzschild manifold,
which generalized the related conclusions in $\mathbb{R}^n$. The
conclusion about the long-time existence and the convergence of IMCF
in the anti-de Sitter-Schwarzschild $n$-manifold ($n\geq3$) obtained
 in \cite{bhw} was
successfully improved to a more general ICF (\ref{ICF-1}) by Chen
and Mao \cite{Ch16}.
 Besides,
applying ICFs, Alexandrov-Fenchel type and other types inequalities
in space forms and even in some warped products can be obtained --
see, e.g., \cite{Gw1,Gw3,Li14,Li16,Mak}. Except \cite{Ch16}, J. Mao
also has some works on ICFs of star-shaped closed hypersurfaces in
Riemannian manifolds (see, e.g., \cite{cmtw,chmw}).

From the above brief introduction, it should be reasonable and
meaningful to study the GCF and its inverse version -- IGCF
(included in ICFs of course).

The examples on ICFs introduced before are only the case that the
initial hypersurface is closed. What about the case that the initial
hypersurfaces have boundary? Can one consider the evolution of
hypersurfaces with boundary along ICFs? The answer is affirmative.
In fact, given a smooth convex open cone in $\mathbb{R}^{n+1}$
($n\geq2$), Marquardt \cite[Theorem 1]{Mar} considered the evolution
of strictly mean convex hypersurfaces with boundary (which are
star-shaped w.r.t. the center of the cone, which meet the cone
perpendicularly and which are contained inside the cone) along the
IMCF, and then, by using the convexity of the cone in the derivation
of the gradient and H\"{o}lder estimates, he proved that this
evolution exists for all the time and the evolving hypersurfaces
converge smoothly to a piece of a round sphere as time tends to
infinity. Inspired by the previous work \cite{cmtw}, Mao and his
collaborator \cite{mt} considered the situation that IMCF equation
in \cite{Mar} was replaced by $\partial X /\partial
t=\nu/|X|^{\beta}H(X)$, $\beta\geq0$ (i.e., the homogeneous
anisotropic factor $|X|^{-\beta}$ was added to the IMCF equation),
and can obtain the long-time existence and a similar asymptotical
behavior of the new flow. This clearly covers Marquardt's result
\cite[Theorem 1]{Mar} as a special case (corresponding to
$\beta=0$). The evolution of strictly convex graphic hypersurfaces
contained in a convex cone in $\mathbb{R}^{n+1}$ ($n\geq2$) along
the IGCF with zero Neumann boundary condition (NBC for short) has
been studied by Sani in 2017, and the long-time existence and the
asymptotical behavior of the flow have been obtained -- see
\cite[Theorems 1.1 and 1.2]{SMG} for details.

In order to state our main conclusion clearly, we need to give
several notions first.

Throughout this paper, let $\mathbb{R}^{n+1}_{1}$ be the
$(n+1)$-dimensional ($n\geq2$) Lorentz-Minkowski space with the
following Lorentzian metric
\begin{eqnarray*}
\langle\cdot,\cdot\rangle_{L}=dx_{1}^{2}+dx_{2}^{2}+\cdots+dx_{n}^{2}-dx_{n+1}^{2}.
\end{eqnarray*}
In fact, $\mathbb{R}^{n+1}_{1}$ is an $(n+1)$-dimensional Lorentz
manifold with index $1$. Denote by
\begin{eqnarray*}
\mathscr{H}^{n}(1)=\{(x_{1},x_{2},\cdots,x_{n+1})\in\mathbb{R}^{n+1}_{1}|x_{1}^{2}+x_{2}^{2}+\cdots+x_{n}^{2}-x_{n+1}^{2}=-1~\mathrm{and}~x_{n+1}>0\},
\end{eqnarray*}
which is exactly the hyperbolic plane of center $(0,0,\ldots,0)$
(i.e., the origin of $\mathbb{R}^{n+1}$) and radius $1$ in
$\mathbb{R}^{n+1}_{1}$. Clearly, from the Euclidean viewpoint,
$\mathscr{H}^{2}(1)$ is one component of a hyperboloid of two
sheets.

As we know, the Lorentz-Minkowski space $\mathbb{R}^{4}_{1}$ is very
important in the study of General Relativity, and it is very
interesting to know whether classical results holding in Euclidean
spaces (or more general, Riemannian manifolds) can be transplanted
to Lorentz-Minkowski spaces (or more general, pesudo-Riemannian
manifolds) or not.\footnote{~One can see the 2$^{nd}$ page of
\cite{GaoY} for a short explanation of the meaningfulness of
considering geometric problems in pesudo-Riemannian manifolds --
using the famous Bernstein theorem in $\mathbb{R}^{n+1}_{1}$ as
example.} Based on this reason, Gao and Mao tried to consider ICFs
in Lorentz-Minkowski spaces and luckily they were successful -- in
fact, the evolution of  (strictly mean convex) spacelike graphic
hypersurface, defined over a convex piece of $\mathscr{H}^{n}(1)$
and contained in a time cone, along the IMCF with zero NBC in
$\mathbb{R}^{n+1}_{1}$ ($n\geq2$) was firstly investigated by them,
and it was shown that the flow exists for all time and, after proper
rescaling, the evolving spacelike graphic hypersurfaces converge
smoothly to a piece of hyperbolic plane of center at origin and
prescribed radius, which actually corresponds to a constant function
defined over the piece of $\mathscr{H}^{n}(1)$, as time tends to
infinity (see \cite[Theorem 1.1]{GaoY2} for details). This result
has already been generalized to its anisotropic version both in
$\mathbb{R}^{n+1}_{1}$ and in the $(n+1)$-dimensional Lorentz
manifold $M^{n}\times\mathbb{R}$, where $M^n$ is a complete
Riemannian $n$-manifold with suitable Ricci curvature constraint
(see \cite{gm1,gm2}). As pointed out in (4) of \cite[Remark
1.1]{gm1}, different from the Euclidean setting made in
\cite[Theorem
 1.1]{mt}, the geometry of
spacelike graphic hypersurfaces in $\mathbb{R}^{n+1}_{1}$ leads to
the fact that
 if one wants to extend the main conclusion in \cite[Theorem
 1.1]{GaoY2} for the IMCF with zero NBC to its anisotropic version in
$\mathbb{R}^{n+1}_{1}$, a totally opposite range for the power of
the homogenous anisotropic factor should be imposed (see
\cite[Theorem 1.1]{gm1}). The lower dimensional version of
\cite{gm1}, i.e., the evolution of spacelike graphic curves defined
over a connected piece of $\mathscr{H}^{2}(1)$ along an anisotropic
IMCF with zero NBC in $\mathbb{R}^{2}_{1}$, has also been considered
and solved completely (see \cite{glm}).

Motivated by Sani's work \cite{SMG} and our previous works
\cite{glm,GaoY2,gm1,gm2}, in this paper, we consider the evolution
of spacelike graphs (contained in a prescribed convex domain) along
the IGCF with zero NBC in $\mathbb{R}^{n+1}_{1}$, and can prove the
following main conclusion.

\begin{theorem}\label{main1.1}
Let $M^n\subset\mathscr{H}^{n}(1)$ be some convex piece of the
hyperbolic plane $\mathscr{H}^{n}(1)\subset\mathbb{R}^{n+1}_{1}$,
and $\Sigma^n:=\{rx\in \mathbb{R}^{n+1}_{1}| r>0, x\in
\partial M^n\}$. Let
$X_{0}:M^{n}\rightarrow\mathbb{R}^{n+1}_{1}$ such that
$M_{0}^{n}:=X_{0}(M^{n})$ is a compact, strictly convex spacelike
$C^{2,\alpha}$-hypersurface ($0<\alpha<1$) which can be written as a
graph over $M^n$.
 Assume that
 \begin{eqnarray*}
M_{0}^{n}=\mathrm{graph}_{M^n}u_{0}
 \end{eqnarray*}
 is a graph over $M^n$ for a positive
map $u_0: M^n\rightarrow \mathbb{R}$, with
$\frac{|Du_{0}|}{u_{0}}\leq\rho<1$ for some nonnegative constant
$\rho$, and
 \begin{eqnarray*}
\partial M_{0}^{n}\subset \Sigma^n, \qquad
\langle\mu\circ X_{0}, \nu_0\circ X_{0} \rangle_{L}|_{\partial
M^n}=0,
 \end{eqnarray*}
 where $\nu_0$ is the past-directed timelike unit normal vector of $M_{0}^{n}$, $\mu$ is a spacelike vector field
defined along $\Sigma^{n}\cap \partial M^{n}=\partial M^{n}$
satisfying the following property:
\begin{itemize}
\item For any $x\in\partial M^{n}$, $\mu(x)\in T_{x}M^{n}$, $\mu(x)\notin T_{x}\partial
M^{n}$, and moreover\footnote{As usual, $T_{x}M^{n}$, $T_{x}\partial
M^{n}$ denote the tangent spaces (at $x$) of $M^{n}$ and $\partial
M^{n}$, respectively. In fact, by the definition of $\Sigma^{n}$
(i.e., a time cone), it is easy to have $\Sigma^{n}\cap\partial
M^{n}=\partial M^{n}$, and we insist on writing as
$\Sigma^{n}\cap\partial M^{n}$ here is just to emphasize the
relation between $\Sigma^{n}$ and $\mu$. Since $\mu$ is a vector
field defined along $\partial M^{n}$, which satisfies $\mu(x)\in
T_{x}M^{n}$, $\mu(x)\notin T_{x}\partial M^{n}$ for any
$x\in\partial M^{n}$, together with the construction of
$\Sigma^{n}$, it is feasible to require $\mu(x)=\mu(rx)$. The
requirement $\mu(x)=\mu(rx)$ makes the assumptions $\langle\mu\circ
X_{0}, \nu_0\circ X_{0} \rangle_{L}|_{\partial M^n}=0$, $\langle \mu
\circ X, \nu\circ X\rangle_{L}=0$ on $\partial M^n \times(0,\infty)$
are reasonable, which can be seen from Lemma \ref{lemma2-1} below in
details. Besides, since $\nu$ is timelike, the vanishing Lorentzian
inner product assumptions on $\mu,\nu$ implies that $\mu$ is
spacelike.}, $\mu(x)=\mu(rx)$.
\end{itemize}
Then we have:

(i) There exists a family of strictly convex spacelike hypersurfaces
$M_{t}^{n}$ given by the unique embedding
\begin{eqnarray*}
X\in C^{2+\alpha,1+\frac{\alpha}{2}} (M^n\times [0,\infty),
\mathbb{R}^{n+1}_{1}) \cap C^{\infty} (M^n\times (0,\infty),
\mathbb{R}^{n+1}_{1})
\end{eqnarray*}
with $X(\partial M^n, t) \subset \Sigma^n$ for $t\geq 0$, satisfying
the following system
\begin{equation}\label{Eq}
\left\{
\begin{aligned}
&\frac{\partial }{\partial t}X=\frac{1}{K^{1/n}}\nu ~~&&in~
M^n \times(0,\infty)\\
&\langle \mu \circ X, \nu\circ X\rangle_{L}=0~~&&on~ \partial M^n \times(0,\infty)\\
&X(\cdot,0)=M_{0}^{n}  ~~&& in~M^{n},
\end{aligned}
\right.
\end{equation}
where $K$ is the Gaussian curvature of
$M_{t}^{n}:=X(M^n,t)=X_{t}(M^{n})$, $\nu$ is the past-directed
timelike unit normal vector of $M_{t}^{n}$.

(ii) The leaves $M_{t}^{n}$ are spacelike graphs over $M^n$, i.e.,
 \begin{eqnarray*}
M_{t}^{n}=\mathrm{graph}_{M^n}u(\cdot, t).
\end{eqnarray*}

(iii) Moreover, the evolving spacelike hypersurfaces converge
smoothly after rescaling to a piece of the spacelike graph of some
positive constant function defined over $M^{n}$, i.e. a piece of
hyperbolic plane of center at origin and prescribed radius.

\end{theorem}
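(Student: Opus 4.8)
The strategy follows the now-standard scheme for inverse curvature flows of graphs with Neumann boundary conditions, adapted to the Lorentzian setting as in \cite{GaoY2,gm1,gm2}. The first step is to reduce the geometric flow \eqref{Eq} to a scalar parabolic PDE. Writing $M_t^n = \mathrm{graph}_{M^n} u(\cdot,t)$ and using geodesic polar-type coordinates on $\mathscr{H}^n(1)$, one parametrizes the spacelike graph and computes the induced metric, its inverse, the past-directed timelike unit normal $\nu$, and the second fundamental form in terms of $u$, $Du$, $D^2 u$. The condition $|Du_0|/u_0\le\rho<1$ is exactly what guarantees spacelikeness; the flow $\partial_t X = K^{-1/n}\nu$ then becomes, after projecting onto the normal direction and accounting for the tangential reparametrization, a scalar equation of the form $\partial_t u = F(x,u,Du,D^2u)$ which is parabolic on the cone of functions with strictly convex graph, while the boundary condition $\langle\mu\circ X,\nu\circ X\rangle_L=0$ becomes a homogeneous Neumann condition $D_\mu u = 0$ on $\partial M^n$ (this is where the homogeneity requirement $\mu(x)=\mu(rx)$ and Lemma~\ref{lemma2-1} are used). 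Short-time existence then follows from standard linear parabolic theory on manifolds with boundary (Schauder estimates), so the real work is establishing a priori estimates on a maximal time interval $[0,T^*)$ and showing $T^*=\infty$.

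The a priori estimates proceed in the usual order. First, $C^0$ bounds: one shows the function $\varphi:=u$ (or a suitable power/logarithm of $u$, adapted to the scaling of the flow) satisfies two-sided bounds, using the maximum principle together with the convexity of the cone $\Sigma^n$ to control the boundary terms — specifically, convexity of $M^n$ forces the right sign in the Hopf-type boundary argument, ruling out new interior extrema migrating to the boundary. Because the speed is $K^{-1/n}$, which scales like a length, one expects $u$ to grow linearly in $t$, so it is cleaner to rescale early: set $\tilde X = e^{-t/n}X$ (or the analogous factor dictated by the homogeneity of $K^{-1/n}$) and prove uniform estimates for the rescaled graph function $\tilde u$. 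Second, gradient estimates: one shows $|Du|/u \le \rho < 1$ is preserved, i.e. spacelikeness does not degenerate. This is the step I expect to be the main obstacle, because in the Lorentzian setting the quantity controlling the gradient is $v=(1-|Du|^2/u^2)^{-1/2}$ (the "tilt" of the spacelike graph against the time direction), and one must derive a parabolic inequality for $v$ — or for $\langle X,\nu\rangle_L$, or for an auxiliary function combining them — whose boundary term is controlled by the convexity of $M^n$ and the Neumann condition; the structure of $K^{-1/n}$ (a product of principal curvatures rather than a sum) makes the evolution of $v$ more delicate than in the IMCF case of \cite{GaoY2}, and one likely needs the strict convexity of $M_t^n$ as an input here, creating a coupling between the gradient and curvature estimates.

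Third, $C^2$ estimates: with $C^0$ and $C^1$ bounds in hand, one differentiates the flow equation twice and applies the maximum principle to the largest principal curvature (equivalently, to a support-function-type quantity), again carefully handling the boundary contribution via the Neumann condition and the convexity of $\Sigma^n$; here one also needs a lower bound on $K$ (preservation of strict convexity), which follows from a maximum principle applied to the smallest principal curvature or to $\det(h_{ij})$, possibly combined with the $C^0$ estimate to get a definite positive lower bound after rescaling. Once uniform $C^2$ bounds are established, the equation becomes uniformly parabolic with concave (after the standard change of variables, $K^{1/n}$ is concave in the Hessian on the positive cone) structure, so Krylov--Safonov and Schauder estimates upgrade this to uniform $C^{2,\alpha}$ bounds, and a bootstrap gives uniform $C^\infty$ bounds on $[\tau,\infty)$ for every $\tau>0$; continuation then yields $T^*=\infty$, proving (i) and (ii). Finally, for (iii), one shows that after rescaling the graph function $\tilde u$ converges: one extracts from the uniform estimates a subsequential smooth limit and identifies it by showing the oscillation of $\tilde u$ decays, e.g. via a Harnack-type argument or by showing that $\max \tilde u - \min\tilde u$ is monotone decreasing and tends to $0$ (using that the limit must be a static solution of the rescaled flow with zero Neumann data, hence a constant, since on a convex piece of $\mathscr H^n(1)$ the only such solutions are the slices $u\equiv\mathrm{const}$); an interpolation/ODE argument then promotes subsequential convergence to full smooth convergence of $\tilde u$ to a positive constant, which geometrically is a piece of a hyperbolic plane of center at the origin and prescribed radius.
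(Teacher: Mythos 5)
Your overall scheme (reduction to a scalar Neumann problem for $\varphi=\log u$, then $C^{0}$, gradient and $C^{2}$ estimates, Krylov--Safonov and bootstrapping, finally convergence of a rescaled flow) is the same as the paper's, but two concrete points go wrong. First, the asymptotics and the rescaling. Since $K^{-1/n}$ is homogeneous of degree one in the scale of the hypersurface, the model ODE is $\dot r=\pm r$, so the behaviour is exponential, not linear; your statement that ``one expects $u$ to grow linearly in $t$'' contradicts your own observation that the speed scales like a length. Moreover, with the past-directed normal the scalar equation is $\partial_t u=-v\,K^{-1/n}$, so $u$ in fact \emph{decays}: the paper's $C^{0}$ estimate is $c_1\le u\,e^{t}\le c_2$, obtained by comparison with the explicit solution $\varphi(t)=-t+c$, and the correct rescaling is $\widetilde u=u\,e^{t}$ (i.e. $\Lambda=e^{-t}$), not $\widetilde X=e^{-t/n}X$. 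With your normalization the rescaled graphs collapse (the rescaled height decays exponentially), so part (iii) cannot be proved as you set it up; the hedge ``the factor dictated by the homogeneity'' does not repair the explicit claim of linear growth. The paper also does not ``rescale early'': all a priori estimates are proved for $u$ itself, and the rescaling enters only in the final convergence section.

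Second, and more seriously, the boundary $C^{2}$ estimate. The Neumann condition $\nabla_\mu\varphi=0$ gives no direct control of the double normal derivative $\varphi_{\mu\mu}$; differentiating the boundary condition tangentially only controls mixed tangential--normal second derivatives. A maximum-principle argument for the largest principal curvature (or for $\iota_{11}$) therefore only works when the maximum is attained in the interior, and your plan contains no mechanism for the case when it sits on $\partial M^{n}$. This is exactly why the paper proves a separate double-normal estimate for $\varphi_{\mu\mu}$ via the barrier $P=\varphi_i q^{i}-Bq$ built from the distance function to $\partial M^{n}$ (Lemma \ref{lemma 4.5}, Lemma \ref{lemma 4.6}, Proposition \ref{proposition 4.7}), and then handles boundary maxima in non-normal directions by the Lions--Trudinger--Urbas device \cite{PL} (Proposition \ref{proposition 4.8}), for which the auxiliary tensor $\eta_{ij}$ must already be inserted into the test quantity $S$ used in the interior estimate (Proposition \ref{C2 interior estimate}). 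Without these ingredients the scheme stalls at its hardest step. Two smaller remarks: the positive lower bound on the curvature (equivalently on $\det\iota_{ij}$) does not require a maximum principle for the smallest principal curvature --- it follows immediately from the $\dot\varphi$ (speed) estimate combined with the gradient bound, see \eqref{det}; and the gradient estimate is not coupled to a convexity-preservation argument: for admissible solutions $\iota_{ij}>0$, hence $Q^{ij}>0$, the evolution of $\tfrac12|D\varphi|^{2}$ closes by itself, the boundary sign coming from the convexity of $\partial M^{n}$ as in \eqref{Gra-est}.
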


\begin{remark}
\rm{ (1) In fact, $M^{n}$ is some \emph{convex} piece of the
spacelike hypersurface $\mathscr{H}^{n}(1)$ implies that the second
fundamental form of $\partial M^{n}$ is positive definite w.r.t. the
vector field $\mu$ (provided its direction is suitably chosen). \\
 (2) Clearly, the main conclusion in \cite{glm}, i.e. the evolution of spacelike graphic curves defined over a
connected piece of $\mathscr{H}^{2}(1)$ along an anisotropic IMCF
with zero NBC in $\mathbb{R}^{2}_{1}$, can also be seen as the
anisotropic, lower dimensional version of Theorem \ref{main1.1}
here. Based on this reason and our previous experience, it is also
natural to add the homogenous anisotropic factor $|X|^{-\beta}$ to
the RHS of the evolution equation in (\ref{Eq}) and similar
conclusions can be expected under some suitable constraint. We
prefer to leave this as an exercise for readers who are interested
in this topic.}
\end{remark}

This paper is organized as follows. In Section \ref{se2}, we will
recall some useful formulae (such as, the Gauss formula, the
Weingarten formula, several fundamental structure equations, etc) of
spacelike hypersurfaces in $\mathbb{R}^{n+1}_{1}$. In Section
\ref{se3}, we will show that using the spacelike graphic assumption,
the flow equation (which generally is a system of PDEs) changes into
a single scalar second-order parabolic PDE. Several estimates,
including $C^0$, time-derivative and gradient estimates, of
solutions to the flow equation will be shown in Section \ref{se4}.
The most difficult part, $C^{2}$-estimates, will be investigated in
Section \ref{sec5}. This, together with the standard theory of
second-order parabolic PDEs (i.e., Krylov-Safanov theory), can be
used to get the estimates of higher-order derivatives of solutions
to the flow equation and then the long-time existence of the flow
naturally follows. In the end, we will clearly show the convergence
of the rescaled flow in Section \ref{se6}.

\section{The geometry of spacelike hypersurfaces in
$\mathbb{R}^{n+1}_{1}$} \label{se2}

In this section, we prefer to give a brief introduction to some
useful formulae of spacelike graphic hypersurfaces in
$\mathbb{R}^{n+1}_{1}$. One can easily find that the first part of
this section was firstly used in our previous work \cite{GaoY}, and
later almost the whole part was used in \cite{GaoY2}. Readers can
find that the analysis for IGCF shown here is much more complicated
than the one used in \cite{GaoY2} for IMCF, especially in the
$C^2$-estimates part, so for convenience and completeness, we insist
on repeating this content here.

As shown in \cite[Section 2]{GaoY}, we know the following fact:

\textbf{FACT}. Given an $(n+1)$-dimensional Lorentz manifold
$(\overline{M}^{n+1},\overline{g})$, with the metric $\overline{g}$,
and its spacelike hypersurface $M^{n}$. For any $p\in M^{n}$, one
can choose a local  Lorentzian orthonormal frame field
$\{e_{0},e_{1},e_{2},\ldots,e_{n}\}$ around $p$ such that,
restricted to $M^{n}$, $e_{1},e_{2},\ldots,e_{n}$ form orthonormal
frames tangent to $M^{n}$. Taking the dual coframe fields
$\{w_{0},w_{1},w_{2},\ldots,w_{n}\}$ such that the Lorentzian metric
$\overline{g}$ can be written as
$\overline{g}=-w_{0}^{2}+\sum_{i=1}^{n}w_{i}^{2}$. Making the
convention on the range of indices
\begin{eqnarray*}
0\leq I,J,K,\ldots\leq n; \qquad\qquad 1\leq i,j,k\ldots\leq n,
\end{eqnarray*}
and doing differentials to forms $w_{I}$, one can easily get the
following structure equations
\begin{eqnarray}
&&(\mathrm{Gauss~ equation})\qquad \qquad R_{ijkl}=\overline{R}_{ijkl}-(h_{ik}h_{jl}-h_{il}h_{jk}), \label{Gauss}\\
&&(\mathrm{Codazzi~ equation})\qquad \qquad h_{ij,k}-h_{ik,j}=\overline{R}_{0ijk},  \label{Codazzi}\\
&&(\mathrm{Ricci~ identity})\qquad \qquad
h_{ij,kl}-h_{ij,lk}=\sum\limits_{m=1}^{n}h_{mj}R_{mikl}+\sum\limits_{m=1}^{n}h_{im}R_{mjkl},
\label{Ricci}
\end{eqnarray}
and the Laplacian of the second fundamental form $h_{ij}$ of $M^{n}$
as follows
\begin{eqnarray} \label{LF}
&&\Delta
h_{ij}=\sum\limits_{k=1}^{n}\left(h_{kk,ij}+\overline{R}_{0kik,j}+\overline{R}_{0ijk,k}\right)+
\sum\limits_{k=1}^{n}\left(h_{kk}\overline{R}_{0ij0}+h_{ij}\overline{R}_{0k0k}\right)+\nonumber\\
&&\qquad\qquad
\sum\limits_{m,k=1}^{n}\left(h_{mj}\overline{R}_{mkik}+2h_{mk}\overline{R}_{mijk}+h_{mi}\overline{R}_{mkjk}\right)\nonumber\\
&&\qquad\quad
-\sum\limits_{m,k=1}^{n}\left(h_{mi}h_{mj}h_{kk}+h_{km}h_{mj}h_{ik}-h_{km}h_{mk}h_{ij}-h_{mi}h_{mk}h_{kj}\right),
\end{eqnarray}
where $R$ and $\overline{R}$ are the curvature tensors of $M^{n}$
and $\overline{M}^{n+1}$ respectively, $A:=h_{ij}w_{i}w_{j}$ is the
second fundamental form with $h_{ij}$ the coefficient components of
the tensor $A$, $\Delta$ is the Laplacian on the hypersurface
$M^{n}$, and, as usual, the comma ``," in subscript of a given
tensor means doing covariant derivatives. For detailed derivation of
the above formulae, we refer readers to, e.g., \cite[Section
2]{hzl}.

\begin{remark}
\rm{ There is one thing we prefer to mention here, that is, by using
the symmetry of the second fundamental form, we have
$\sum_{m,k=1}^{n}\left(h_{km}h_{mj}h_{ik}-h_{mi}h_{mk}h_{kj}\right)=0$,
which implies that the last term in the RHS of (\ref{LF}) becomes
\begin{eqnarray*}
-\sum\limits_{m,k=1}^{n}\left(h_{mi}h_{mj}h_{kk}-h_{km}h_{mk}h_{ij}\right).
\end{eqnarray*}
Here we insist on writing the Laplacian of $h_{ij}$ as (\ref{LF}) in
order to emphasize the origin of this formula.
 }
\end{remark}

Clearly, in our setting here, all formulae mentioned above can be
used directly with $\overline{M}^{n+1}=\mathbb{R}^{n+1}_{1}$ and
$\overline{g}=\langle\cdot,\cdot\rangle_{L}$.

For convenience, in the sequel we will use the Einstein summation
convention -- repeated superscripts and subscripts should be made
summation from $1$ to $n$. Given an $n$-dimensional Riemannian
manifold $M^{n}$ with the metric $g$, denote by
$\{y^{i}\}_{i=1}^{n}$ the local coordinate of $M^{n}$, and
$\frac{\partial}{\partial y^{i}}$, $i=1,2,\cdots,n$, the
corresponding coordinate vector fields ($\partial_{i}$ for short).
The Riemannian curvature $(1,3)$-tensor $R$ of $M^{n}$ can be
defined by
\begin{equation*}
R(X, Y)Z=-\nabla_{X}\nabla_{Y}Z+\nabla_{Y}\nabla_{X}Z+\nabla_{[X,
Y]}Z,
\end{equation*}
where $X,Y,Z\in\mathscr{X}(M)$ are tangent vector fields in the
tangent bundle $\mathscr{X}(M)$ of $M^{n}$, $\nabla$ is the gradient
operator on $M^{n}$, and, as usual, $[\cdot,\cdot]$ stands for the
Lie bracket. The component of the curvature tensor $R$ is defined by
\begin{equation*}
R\bigg({\frac{\partial}{\partial y^{i}}}, {\frac{\partial}{\partial
y^{j}}}\bigg){\frac{\partial}{\partial y^{k}}} \doteq
R_{ijk}^{l}{\frac{\partial}{\partial y^{l}}}
\end{equation*}
and $R_{ijkl}:= g_{mi}R_{jkl}^{m}$. Now, let us go back to our
setting -- the evolution of strictly convex spacelike graphs in
$\mathbb{R}^{n+1}_{1}$ along the IGCF with zero NBC. The second
fundamental form of the hypersurface $M_{t}^{n}=X(M^{n},t)$ w.r.t.
$\nu$ is given by\footnote{~The definition for the second
fundamental form $h_{ij}$ is different from the one used in
\cite[Section 2]{GaoY2}, although their expressions look like the
same but in fact we have used opposite orientations for the timelike
unit normal vector $\nu$.

One might find that in our works
\cite{glm-1,glm,GaoY,GaoY2,gm1,gm2}, we have used two definitions
for $h_{ij}$, which have exactly opposite sign. However, this would
not create chaos in the analysis of those papers provided other
related settings have been made. In fact, this kind of phenomenons
 happens in the research of Differential Geometry. For instance, one might find that
 there at least exist two definitions for the $(1,3)$-type curvature tensor on
 Riemannian manifolds, which have opposite sign, but \emph{essentially} same
 fundamental equations (such as the Gauss equation, the Codazzi equation, the Ricci identity,
 etc) can be derived provided necessary settings have been made.

  Readers can  check \cite[Remark 1.1]{glm-1} for a very detailed explanation
about the usage of definitions for $h_{ij}$ and other settings made
such that essentially there is no affection to the analysis in our
works mentioned above.
 }
\begin{eqnarray*}
h_{ij}=\left\langle X_{,ij}, \nu\right\rangle_{L},
 \end{eqnarray*}
where $\langle\nu,\nu\rangle_{L}=-1$, $X_{,ij}:=\partial_i
\partial_j X-\Gamma_{ij}^{k}X_k$ with $\Gamma_{ij}^{k}$ the
Christoffel symbols of the metric on $M^{n}_{t}$. Here we would like
to emphasize one thing, that is,
$X_{k}=(X_{t})_{\ast}(\partial_{k})$ with $(X_{t})_{\ast}$ the
tangential mapping induced by the map $X_{t}$. It is easy to have
the following identities
\begin{equation}\label{Gauss for}
X_{,ij}=-h_{ij}\nu, \qquad (\mathrm{Gauss~formula})
\end{equation}
\begin{equation}\label{Wein for}
\nu_{,i}=-h_{ij}X^j, \qquad (\mathrm{Weingarten~formula})
\end{equation}
Besides, using (\ref{Gauss}), (\ref{Codazzi}), (\ref{Ricci}) and
(\ref{LF}) with the fact $\overline{R}=0$ in our setting, we have
\begin{equation}\label{Gauss-1}
R_{ijkl}=h_{il}h_{jk}-h_{ik}h_{jl},
\end{equation}
\begin{equation}\label{Codazzi-1}
\nabla_{k}h_{ij}=\nabla_{j}h_{ik}, \qquad (i.e.,~h_{ij,k}=h_{ik,j})
\end{equation}
and
\begin{eqnarray}\label{Laplace}
\Delta h_{ij}=H_{,ij}-H h_{ik}h^{k}_{j}+h_{ij}|A|^2.
\end{eqnarray}

\begin{remark}
\rm{ Similar to the Riemannian case, the derivation of the formula
(\ref{Laplace}) depends on equations (\ref{Gauss-1}) and
(\ref{Codazzi-1}).}
\end{remark}

We make an agreement that, for simplicity, in the sequel the comma
``," in subscripts will be omitted unless necessary.

\section{The scalar version of the flow equation} \label{se3}

Since the spacelike $C^{2,\alpha}$-hypersurface $M^{n}_{0}$ can be
written as a graph over $M^{n}\subset\mathscr{H}^{n}(1)$, there
exists a function $u_0\in C^{2,\alpha} (M^{n})$ such that
 $X_0: M^{n} \rightarrow \mathbb{R}^{n+1}_{1}$ has the form $x \mapsto
G_{0}:=(x,u_0(x))$. The hypersurface $M_{t}^{n}$ given by the
embedding
\begin{eqnarray*}
X(\cdot, t): M^{n}\rightarrow \mathbb{R}^{n+1}_{1}
\end{eqnarray*}
at time $t$ may be represented as a graph over $M^n\subset
\mathscr{H}^{n}(1)$, and then we can make ansatz
\begin{eqnarray*}
X(x,t)=\left(x,u(x,t)\right)
\end{eqnarray*}
for some function $u: M^{n} \times [0,T) \rightarrow \mathbb{R}$.
The following formulae are needed.

\begin{lemma} \label{lemma2-1}
Define $p:=X(x,t)$ and assume that a point on $\mathscr{H}^{n}(1)$
is described by local coordinates $\xi^{1},\ldots,\xi^{n}$, that is,
$x=x(\xi^{1},\ldots,\xi^{n})$. By the abuse of notations, let
$\partial_i$ be the corresponding coordinate fields on
$\mathscr{H}^{n}(1)$ and
$\sigma_{ij}=g_{\mathscr{H}^{n}(1)}(\partial_i,\partial_j)$ be the
Riemannian metric on $\mathscr{H}^{n}(1)$. Of course,
$\{\sigma_{ij}\}_{i,j=1,2,\ldots,n}$ is also the metric on
$M^{n}\subset\mathscr{H}^{n}(1)$. Following the agreement before,
denote by $u_{i}:=D_{i}u$, $u_{ij}:=D_{j}D_{i}u$, and
$u_{ijk}:=D_{k}D_{j}D_{i}u$ the covariant derivatives of $u$ w.r.t.
the metric $g_{\mathscr{H}^{n}(1)}$, where $D$ is the covariant
connection on $\mathscr{H}^{n}(1)$. Let $\nabla$ be the Levi-Civita
connection of $M_{t}^{n}$ w.r.t. the metric
$g:=u^{2}g_{\mathscr{H}^{n}(1)}-dr^{2}$ induced from the Lorentzian
metric $\langle\cdot,\cdot\rangle_{L}$ of $\mathbb{R}^{n+1}_{1}$.
Then, the following formulae hold:\footnote{~In fact, here we can
treat like what has been done in \cite[Lemma 2.1]{gm1} --
simplifying this whole paragraph as ``\emph{Under the same setting
as \cite[Lemma 3.1]{GaoY2}, we have the following formulas}".
However, for the convenience to readers, we prefer to give the
complete information to the setting of local coordinates and the
induced metric. Besides, one can also easily find that formulas in
\cite[Lemma 3.1]{GaoY2} can be directly used here except the minus
sign should be added to $h_{ij}$ because of the usage of different
definitions.}

(i) The tangential vector on $M_{t}^{n}$ is
\begin{eqnarray*}
X_{i}=\partial_{i}+u_i\partial_{r},
\end{eqnarray*}
and the corresponding past-directed timelike unit normal vector is
given by
\begin{eqnarray*}
\nu=-\frac{1}{v}\left(\partial_r+\frac{1}{u^2}u^j\partial_j\right),
\end{eqnarray*}
where $u^{j}:=\sigma^{ij}u_{i}$, and $v:=\sqrt{1-u^{-2}|D u|^2}$
with $D u$ the gradient of $u$.

(ii) The induced metric $g$ on $M_{t}^{n}$ has the form
\begin{equation*}
g_{ij}=u^2\sigma_{ij}-u_{i} u_{j},
\end{equation*}
and its inverse is given by
\begin{equation*}
g^{ij}=\frac{1}{u^2}\left(\sigma^{ij}+\frac{u^i  u^j
}{u^2v^{2}}\right).
\end{equation*}

(iii) The second fundamental form of $M_{t}^{n}$ is given by
\begin{eqnarray*}
h_{ij}=\frac{1}{v}\left(u_{ij}+u \sigma_{ij}-\frac{2}{u}{u_i  u_j
}\right),
\end{eqnarray*}

(iv) The Gaussian curvature has the form
\begin{eqnarray*}
K=\frac{\det(h_{ij})}{\det(g_{ij})}=\frac{1}{v^{n}}\frac{\det\left(u_{ij}+u
\sigma_{ij}-\frac{2}{u}{u_i  u_j
}\right)}{\det\left(u^2\sigma_{ij}-u_{i} u_{j}\right)}.
\end{eqnarray*}

(v) Let $p=X(x,t)\in \Sigma^n$ with $x\in\partial M^{n}$,
$\hat{\mu}(p)\in T_{p}M^{n}_{t}$, $\hat{\mu}(p)\notin T_{p}\partial
M^{n}_{t}$, $\mu=\mu^i(x_{p})
\partial_{i}(x)$ at $x$, with $\partial _{i}$ the basis vectors of $T_{x}M^{n}$. Then
\begin{eqnarray*}
\langle\hat{\mu}(p), \nu(p) \rangle_{L}=0 \Leftrightarrow \mu^i(x)
u_i(x,t)=0.
\end{eqnarray*}

\end{lemma}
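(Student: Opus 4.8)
The plan is purely computational: the whole lemma follows by passing to the right ambient coordinates and computing, and it runs parallel to \cite[Lemma 3.1]{GaoY2} for the IMCF, the only genuine difference being the sign of $h_{ij}$ caused by the opposite choice of timelike normal. First I would fix ``Lorentzian cone coordinates'': every point $P$ of the future time cone of $\mathbb{R}^{n+1}_{1}$ can be written uniquely as $P=rx$ with $r>0$ and $x\in\mathscr{H}^{n}(1)$, and differentiating $P=rx$ while using $\langle x,x\rangle_{L}=-1$ (hence $\langle x,dx\rangle_{L}=0$) shows that in coordinates $(r,\xi^{1},\ldots,\xi^{n})$ the Lorentzian metric becomes $\langle\cdot,\cdot\rangle_{L}=-dr^{2}+r^{2}\sigma$ with $\sigma:=g_{\mathscr{H}^{n}(1)}$. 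In these coordinates the graph ansatz $X=(x,u(x,t))$ is just $r=u(\xi,t)$, and a short computation shows that the only nonzero Christoffel symbols of $-dr^{2}+r^{2}\sigma$ are $\overline{\Gamma}^{r}_{ij}=r\sigma_{ij}$, $\overline{\Gamma}^{k}_{ri}=r^{-1}\delta^{k}_{i}$ and $\overline{\Gamma}^{k}_{ij}=\widehat{\Gamma}^{k}_{ij}$, where $\widehat{\Gamma}^{k}_{ij}$ are the Christoffel symbols of $\sigma$.

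For (i)--(ii): differentiating the parametrization immediately gives $X_{i}=\partial_{i}+u_{i}\partial_{r}$. Writing $\nu=a\partial_{r}+b^{j}\partial_{j}$ and imposing $\langle\nu,X_{i}\rangle_{L}=0$ --- using $\langle\partial_{r},\partial_{r}\rangle_{L}=-1$, $\langle\partial_{r},\partial_{j}\rangle_{L}=0$ and $\langle\partial_{i},\partial_{j}\rangle_{L}=u^{2}\sigma_{ij}$ on the graph --- forces $b^{j}=au^{j}/u^{2}$; then $\langle\nu,\nu\rangle_{L}=-1$ gives $-a^{2}(1-u^{-2}|Du|^{2})=-a^{2}v^{2}=-1$, and the past-directed orientation (note $\partial_{r}$ is future-directed timelike since $x_{n+1}>0$) pins down $a=-1/v$, i.e. $\nu=-\tfrac{1}{v}\big(\partial_{r}+u^{-2}u^{j}\partial_{j}\big)$. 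The induced metric is read off at once, $g_{ij}=\langle X_{i},X_{j}\rangle_{L}=u^{2}\sigma_{ij}-u_{i}u_{j}$, and the claimed inverse is checked by forming $g^{ik}g_{kj}$ and simplifying with $\sigma^{ik}u_{k}=u^{i}$, $u^{k}u_{k}=|Du|^{2}$ and the identity $1-u^{-2}|Du|^{2}=v^{2}$.

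For (iii)--(iv): the key observation is that $h_{ij}=\langle X_{,ij},\nu\rangle_{L}=\langle\overline{\nabla}_{X_{i}}X_{j},\nu\rangle_{L}$, where $\overline{\nabla}$ is the Levi-Civita connection of $\mathbb{R}^{n+1}_{1}$, since $X_{,ij}$ is precisely the $\nu$-component of $\overline{\nabla}_{X_{i}}X_{j}$. Plugging $X_{i}=\partial_{i}+u_{i}\partial_{r}$ into $\overline{\nabla}$ with the Christoffel symbols above, on the graph $r=u$ one finds
\begin{equation*}
\overline{\nabla}_{X_{i}}X_{j}=\widehat{\Gamma}^{k}_{ij}X_{k}+\big(\partial_{i}u_{j}-\widehat{\Gamma}^{k}_{ij}u_{k}+u\sigma_{ij}\big)\partial_{r}+\tfrac{u_{j}}{u}\partial_{i}+\tfrac{u_{i}}{u}\partial_{j}.
\end{equation*}
Converting the ordinary second derivative into the covariant Hessian on $\mathscr{H}^{n}(1)$ via $\partial_{i}u_{j}-\widehat{\Gamma}^{k}_{ij}u_{k}=u_{ij}$, and pairing with $\nu$ --- using $\langle X_{k},\nu\rangle_{L}=0$, $\langle\partial_{r},\nu\rangle_{L}=\tfrac{1}{v}$, $\langle\partial_{i},\nu\rangle_{L}=-\tfrac{u_{i}}{v}$ --- gives $h_{ij}=\tfrac{1}{v}\big(u_{ij}+u\sigma_{ij}-\tfrac{2}{u}u_{i}u_{j}\big)$. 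Since $M^{n}_{t}$ is spacelike, $g$ is positive definite, so $K=\det(h^{i}_{j})=\det(h_{ij})/\det(g_{ij})$, and pulling $v^{-1}$ out of each of the $n$ rows of $(h_{ij})$ yields the stated formula for $K$.

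For (v): by ray-invariance $\mu(rx)=\mu(x)$, the ambient vector $\mu$ at $p=X(x,t)=u(x,t)x$ is the ambient vector $\mu(x)=\mu^{i}\partial_{i}|_{x}$; since the coordinate field $\partial_{i}|_{p}$ equals $u$ times the same Euclidean vector as $\partial_{i}|_{x}$, we have $\mu(p)=(\mu^{i}/u)\,\partial_{i}|_{p}$ in the cone frame at $p$, which has no $\partial_{r}$-component because $\mu$ is tangent to $\mathscr{H}^{n}(1)$. Hence, using $\langle\partial_{i},\partial_{j}\rangle_{L}=u^{2}\sigma_{ij}$ and $\langle\partial_{i},\partial_{r}\rangle_{L}=0$ on the graph, $\langle\mu\circ X,\nu\circ X\rangle_{L}=\langle\hat{\mu}(p),\nu(p)\rangle_{L}=-\mu^{i}u_{i}/(uv)$, which vanishes iff $\mu^{i}u_{i}=0$ since $u,v>0$. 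The only delicate point I expect is (iii): one must use the ambient (cone) connection rather than the flat one, correctly turn $\partial_{i}u_{j}$ into the covariant Hessian on $\mathscr{H}^{n}(1)$ (which is what produces the $-\tfrac{2}{u}u_{i}u_{j}$ term), and stay consistent with the orientation and sign conventions fixed for $\nu$ and for $h_{ij}=\langle X_{,ij},\nu\rangle_{L}$; the rest is bookkeeping of the three relevant metrics (the ambient cone metric, the metric $\sigma$ on $\mathscr{H}^{n}(1)$, and the induced metric $g$ on $M^{n}_{t}$).
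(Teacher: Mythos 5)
Your computation is correct and is exactly the standard argument the paper omits by referring to \cite[Lemma 3.1]{GaoY2}: writing the cone metric as $-dr^{2}+r^{2}\sigma$, reading off $X_{i}$, $\nu$, $g_{ij}$, computing $h_{ij}=\langle\overline{\nabla}_{X_{i}}X_{j},\nu\rangle_{L}$ with the cone Christoffel symbols, and using the ray-invariance of $\mu$ for (v). The orientation choice ($\partial_{r}$ future-directed, hence $a=-1/v$ for the past-directed normal) and the resulting sign of $h_{ij}$ agree with the conventions fixed in Section 2, so no gap remains.
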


\begin{proof}
The proof is almost the same as that of \cite[Lemma 3.1]{GaoY2}, and
we prefer to omit here.
\end{proof}

Using techniques as in Ecker \cite{Eck} (see also \cite{Ge90, Ge3,
Mar}), the problem \eqref{Eq} is reduced to solve the following
scalar equation with the corresponding initial data and the
corresponding NBC
\begin{equation}\label{Eq-}
\left\{
\begin{aligned}
&\frac{\partial u}{\partial t}=-\frac{v}{K^{1/n}} \qquad
&&~\mathrm{in}~
M^n\times(0,\infty)\\
&\nabla_{\mu} u=0  \qquad&&~\mathrm{on}~ \partial M^n\times(0,\infty)\\
&u(\cdot,0)=u_{0} \qquad &&~\mathrm{in}~M^n.
\end{aligned}
\right.
\end{equation}
By Lemma \ref{lemma2-1}, define a new function $\varphi(x,t)=\log
u(x, t)$ and let $\iota^{ij}(x,t)$ indicate the inverse of
$\iota_{ij}(x,t)=\varphi_{ij}(x,t)+\sigma_{ij}(x)-\varphi_{i}(x,t)\varphi_{j}(x,t)$.
Then the Gaussian curvature can be rewritten as
\begin{eqnarray*}
K=\frac{e^{-n\varphi}}{\left(1-|D\varphi|^{2}\right)^{(n+2)/2}}\frac{\det(\iota_{ij})}{\det(\sigma_{ij})}.
\end{eqnarray*}
Hence, the evolution equation in \eqref{Eq-} can be rewritten as
\begin{eqnarray*}
\frac{\partial}{\partial
t}\varphi=-(1-|D\varphi|^2)^{\frac{n+1}{n}}\frac{\det^{\frac{1}{n}}(\sigma_{ij})}
{\det^{\frac{1}{n}}(\iota_{ij})}:=Q(D\varphi, D^2\varphi).
\end{eqnarray*}

Thus, the problem \eqref{Eq} is again reduced to solve the following
scalar equation with the NBC and the initial data
\begin{equation}\label{Evo-1}
\left\{
\begin{aligned}
&\frac{\partial \varphi}{\partial t}=Q(D\varphi, D^{2}\varphi) \quad
&& \mathrm{in} ~M^n\times(0,T)\\
&\nabla_{\mu} \varphi =0  \quad && \mathrm{on} ~ \partial M^n\times(0,T)\\
&\varphi(\cdot,0)=\varphi_{0} \quad && \mathrm{in} ~ M^n,
\end{aligned}
\right.
\end{equation}
with the matrix
$$\iota_{ij}(x,0)=\varphi_{ij}(x,0)+\sigma_{ij}(x)-\varphi_{i}(x,0)\varphi_{j}(x,0)$$
positive definite up to the boundary $\partial M^n$, since $M_0$ is
strictly convex. Clearly, for the initial spacelike graphic
hypersurface $M_{0}^{n}$,
$$\frac{\partial Q}{\partial \varphi_{ij}}\Big{|}_{\varphi_0}=-\frac{1}{n}\varphi_{t}(x,0)\iota^{ij}(x,0)$$ is positive on $M^n$. Based on the above
facts, as in \cite{Ge90, Ge3, Mar}, we can get the following
short-time existence and uniqueness for the parabolic system
\eqref{Eq}.

\begin{lemma}
Let $X_0(M^n)=M_{0}^{n}$ be as in Theorem \ref{main1.1}. Then there
exist some $T>0$, a unique solution  $u \in
C^{2+\alpha,1+\frac{\alpha}{2}}(M^n\times [0,T]) \cap C^{\infty}(M^n
\times (0,T])$, where $\varphi(x,t)=\log u(x,t)$, to the parabolic
system \eqref{Evo-1} with the matrix
 \begin{eqnarray*}
\iota_{ij}(x,t)=\varphi_{ij}(x,t)+\sigma_{ij}(x)-\varphi_{i}(x,t)\varphi_{j}(x,t)
 \end{eqnarray*}
positive on $M^n$. Thus there exists a unique map $\pi:
M^n\times[0,T]\rightarrow M^n$ such that $\pi(\partial M^n
,t)=\partial M^n$ and the map $\widehat{X}$ defined by
\begin{eqnarray*}
\widehat{X}: M^n\times[0,T)\rightarrow \mathbb{R}^{n+1}_{1}:
(x,t)\mapsto X(\pi(x,t),t)
\end{eqnarray*}
has the same regularity as stated in Theorem \ref{main1.1} and is
the unique solution to the parabolic system \eqref{Eq}.
\end{lemma}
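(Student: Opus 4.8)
The plan is to read \eqref{Evo-1} as a scalar, fully nonlinear, parabolic Cauchy--Neumann problem on the compact manifold-with-boundary $M^n$, and to invoke the classical short-time existence theory for such problems, exactly in the spirit of \cite{Ge90,Ge3,Mar}. Three structural facts have to be recorded. First, the problem is parabolic at the initial datum: this is precisely the computation displayed just above the statement, namely the matrix $\partial Q/\partial\varphi_{ij}|_{\varphi_0}=-\tfrac1n\varphi_t(x,0)\,\iota^{ij}(x,0)$ is positive definite, since $\iota_{ij}(x,0)$ is positive definite ($M_0^n$ strictly convex) and $\varphi_t(x,0)=Q(D\varphi_0,D^2\varphi_0)<0$ (as $|D\varphi_0|\le\rho<1$). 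Second, the boundary operator $\varphi\mapsto\nabla_\mu\varphi$ is a \emph{uniformly oblique} linear first-order operator, because by hypothesis $\mu(x)\in T_xM^n\setminus T_x\partial M^n$ for every $x\in\partial M^n$, so $\mu$ is nowhere tangent to $\partial M^n$. Third, the zeroth-order compatibility condition at the corner $\partial M^n\times\{0\}$ holds because the initial graph already satisfies the Neumann condition: by assumption $\langle\mu\circ X_0,\nu_0\circ X_0\rangle_L|_{\partial M^n}=0$, which by Lemma \ref{lemma2-1}(v) is equivalent to $\mu^i(u_0)_i=0$, i.e. $\nabla_\mu\varphi_0=0$.

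Granting these, I would linearize \eqref{Evo-1} about $\varphi_0$, solve the resulting linear oblique-derivative parabolic problem by the Schauder theory for such equations, and close a contraction-mapping argument in the parabolic H\"older space $C^{2+\alpha,1+\frac{\alpha}{2}}(M^n\times[0,T])$ to obtain, for some $T>0$, a unique solution $\varphi$ there. Shrinking $T$ if necessary and using continuity in $t$, the symmetric matrix $\iota_{ij}(x,t)$ stays positive definite on $M^n\times[0,T]$, so the equation is genuinely (uniformly) parabolic on that slab; differentiating the equation and iterating interior-in-time Schauder estimates then upgrades $\varphi$ to $C^\infty(M^n\times(0,T])$. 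Setting $u=e^\varphi$ yields the asserted solution of the graphical flow \eqref{Eq-}.

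It remains to pass from the graphical solution, for which the map $x\mapsto X(x,t)=(x,u(x,t))$ moves points only in the radial direction $\partial_r$, to a genuine solution of the geometric system \eqref{Eq}, for which points move along $\nu$. I would introduce the tangential correction $\pi(\cdot,t):M^n\to M^n$ as the flow of the time-dependent vector field accounting for the difference of the two velocities. A short computation with Lemma \ref{lemma2-1}(i) gives $\partial_tX=-\frac{v}{K^{1/n}}\partial_r$ while $\frac{1}{K^{1/n}}\nu=-\frac{1}{vK^{1/n}}\big(\partial_r+\frac{1}{u^2}u^j\partial_j\big)$, and since $X_i=\partial_i+u_i\partial_r$ the required reparametrization is the solution of the ODE system $\partial_t\pi^j(x,t)=-\frac{u^j}{u^2vK^{1/n}}\big|_{(\pi(x,t),t)}$, $\pi(\cdot,0)=\mathrm{id}_{M^n}$. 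This system is uniquely solvable with the regularity of its data; crucially, the Neumann condition $\mu^iu_i=0$ on $\partial M^n$ forces the driving field (which is parallel to the $\sigma$-gradient of $u$) to be tangent to $\partial M^n$ there, so each $\pi(\cdot,t)$ is a diffeomorphism of $M^n$ with $\pi(\partial M^n,t)=\partial M^n$. Then $\widehat X(x,t):=X(\pi(x,t),t)$ satisfies $\partial_t\widehat X=\frac{1}{K^{1/n}}\nu$, keeps $\partial M^n$ on the cone $\Sigma^n$, and has the regularity claimed in Theorem \ref{main1.1}; uniqueness for \eqref{Eq} follows from the uniqueness for \eqref{Evo-1}, since any solution of \eqref{Eq} projects to a solution of \eqref{Evo-1} and $\pi$ is then determined.

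The only mildly delicate points are the verification of the corner compatibility conditions needed for regularity up to $t=0$, and the check that the reparametrization field is tangent to $\partial M^n$; both are forced by the Neumann datum and involve no real difficulty. There is no deep obstacle at this stage: the serious analysis — the a priori $C^0$, time-derivative, gradient and (especially) $C^2$ estimates that promote short-time to long-time existence — is carried out in the later sections.
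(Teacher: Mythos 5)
Your proposal is correct and follows essentially the same route as the paper, which likewise reduces \eqref{Eq} to the scalar Neumann problem \eqref{Evo-1}, notes that $\partial Q/\partial\varphi_{ij}|_{\varphi_0}=-\tfrac1n\varphi_t(x,0)\iota^{ij}(x,0)$ is positive definite by the strict convexity of $M_0^n$, and then invokes the standard short-time existence theory of Gerhardt, Ecker and Marquardt together with the tangential reparametrization $\pi$ to recover a solution of \eqref{Eq}. Your explicit verification of obliqueness, corner compatibility, and the ODE $\partial_t\pi^j=-\frac{u^j}{u^2vK^{1/n}}$ simply spells out the details the paper delegates to those references.
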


Let $T^{\ast}$ be the maximal time such that there exists some
 \begin{eqnarray*}
u\in C^{2+\alpha,1+\frac{\alpha}{2}}(M^n\times[0,T^{\ast}))\cap
C^{\infty}(M^n\times(0,T^{\ast}))
 \end{eqnarray*}
  which solves \eqref{Evo-1}. In the
sequel, we shall prove a priori estimates for those admissible
solutions on $[0,T]$ where $T<T^{\ast}$.


\section{$C^0$, $\dot{\varphi}$ and gradient estimates} \label{se4}

In order to obtain $C^{0}$-estimate, we prove a comparison principle
by using a standard maximum principle argument.

\begin{lemma}\label{lemma3.1}
Let $\varphi$ and $\psi$ be the two solutions of (\ref{Evo-1}) with
$\varphi(x,0)\leq\psi(x,0)$ for all $x\in M^{n}$, we have
$$\varphi(x,t)\leq\psi(x,t)$$
holds for all $(x,t)\in M^{n}\times[0,T]$.
\end{lemma}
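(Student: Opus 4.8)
The plan is to prove this comparison principle via the standard parabolic maximum principle applied to the difference $w := \varphi - \psi$. Since both $\varphi$ and $\psi$ solve the same equation $\partial_t \varphi = Q(D\varphi, D^2\varphi)$, I would subtract the two equations and rewrite the difference of the nonlinear operator $Q$ using the mean-value theorem along the segment joining $(D\psi, D^2\psi)$ to $(D\varphi, D^2\varphi)$. This produces a linear parabolic equation for $w$ of the form
\[
\partial_t w = a^{ij}(x,t)\, w_{ij} + b^i(x,t)\, w_i,
\]
where $a^{ij} = \int_0^1 \frac{\partial Q}{\partial \varphi_{ij}}\big(D\psi + sDw, D^2\psi + sD^2 w\big)\,ds$ and similarly for $b^i$ with $\frac{\partial Q}{\partial \varphi_i}$. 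The crucial structural input is that $\frac{\partial Q}{\partial \varphi_{ij}}$ is positive definite along admissible solutions — this was already noted in the excerpt, where $\frac{\partial Q}{\partial \varphi_{ij}} = -\frac{1}{n}\varphi_t\, \iota^{ij}$ with $\iota^{ij}$ positive definite and $\varphi_t < 0$ (the speed $-v/K^{1/n}$ is negative). Hence $a^{ij}$ is positive definite on $[0,T]$, so the operator $\mathcal{L} w := a^{ij} w_{ij} + b^i w_i - \partial_t w$ is uniformly parabolic on the compact time interval, with bounded (indeed smooth) coefficients.

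Next I would run the maximum principle argument. Suppose, for contradiction, that $\max_{M^n \times [0,T]} w > 0$; let this maximum be attained at some point $(x_0, t_0)$ with $t_0 > 0$ (it cannot be attained at $t=0$ by hypothesis, after first reducing to the strict case or arguing directly). If $x_0$ is an interior point of $M^n$, then at $(x_0,t_0)$ we have $Dw = 0$, $D^2 w \le 0$, and $\partial_t w \ge 0$, which combined with $\partial_t w = a^{ij} w_{ij} + b^i w_i \le 0$ forces $\partial_t w = 0$ and contradicts the strong maximum principle / Hopf-type argument (or, after adding a standard auxiliary term $\epsilon e^{\lambda t}$ or $-\epsilon t$ to $w$ to make the inequality strict, yields an immediate contradiction). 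The genuinely delicate case is the boundary: if $x_0 \in \partial M^n$, the Neumann condition $\nabla_\mu w = \nabla_\mu \varphi - \nabla_\mu \psi = 0$ is available, and one invokes the Hopf boundary point lemma. Since $\mu(x_0)$ is transversal to $\partial M^n$ and points into $M^n$ (by the assumption $\mu(x) \in T_x M^n$, $\mu(x) \notin T_x \partial M^n$), a positive interior maximum at a boundary point would force $\nabla_\mu w < 0$ there by Hopf's lemma, contradicting $\nabla_\mu w = 0$.

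The main obstacle I anticipate is handling the boundary case cleanly: one must verify that $\mu$ is indeed an inward-pointing (or at least strictly oblique, non-tangential) direction so that the Hopf boundary point lemma applies, and that the oblique-derivative version of the lemma is legitimate for the linearized operator with its particular coefficients. This is essentially a matter of bookkeeping with the geometry of $M^n$ and the frame, but it is where the argument has real content; the interior maximum principle is routine. A minor technical point is the reduction to strict inequalities — I would add a perturbation term such as $w_\epsilon := w - \epsilon(1+t)$ (or $w - \epsilon e^{Ct}$ for a large constant $C$ absorbing the lower-order terms), show $w_\epsilon$ satisfies a strict differential inequality, conclude $w_\epsilon \le 0$ on $M^n \times [0,T]$, and let $\epsilon \to 0^+$. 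Once $w \le 0$ everywhere on $[0,T]$ is established, the statement $\varphi(x,t) \le \psi(x,t)$ for all $(x,t) \in M^n \times [0,T]$ follows immediately.
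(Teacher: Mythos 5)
Your proposal is correct and takes essentially the same route as the paper: there the difference $F=\varphi-\psi$ is also shown to satisfy a linear uniformly parabolic equation $\partial_t F=a^{ij}F_{ij}+b^kF_k$ with zero Neumann condition, obtained by writing $\partial_t F$ as an integral of the $s$-derivative of $Q$ along the segment $s\varphi+(1-s)\psi$, and the conclusion follows from the parabolic maximum principle together with Hopf's lemma. The only point worth noting is that positivity of $a^{ij}$ must hold along the whole interpolation, not just at the two solutions; this is exactly the paper's observation that $f_{ij}[s]=s\bigl(\varphi_{ij}-\varphi_i\varphi_j\bigr)+(1-s)\bigl(\psi_{ij}-\psi_i\psi_j\bigr)+\sigma_{ij}$ is a convex combination of the positive definite matrices $\iota_{ij}[\varphi]$ and $\iota_{ij}[\psi]$, which your mean-value formulation uses implicitly.
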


\begin{proof}
Let
$$F(x,t):=\varphi(x,t)-\psi(x,t).$$
It is easy to know
$$F(x,0)\leq 0,$$
since $\varphi$ and $\psi$ are the solution of the scalar equation
(\ref{Evo-1}), so
$$\nabla_{\mu}F=\nabla_{\mu}\varphi-\nabla_{\mu}\psi=0.$$
For a real number $s\in[0,1]$, we set
$$f_{ij}(x,t)[s]:=\sigma_{ij}+s\varphi_{ij}-s\varphi_{i}\varphi_{j}+(1-s)\psi_{ij}-(1-s)\psi_{i}\psi_{j}.$$
Since the set of positive definite matrices is convex, we
have\footnote{~ In (\ref{3.1}), $\det^{\frac{1}{n}}(\sigma_{ij})$
should be $\left(\det(\sigma_{ij})\right)^{1/n}$, and this treatment
also happens to $\det^{\frac{1}{n}}(f_{ij})$. In the sequel, for
convenience and simplicity, sometimes we will directly use
$\det^{\frac{1}{n}}(\cdot)$ to represent the $n$-th root of a
prescribed determinant, i.e. $\left(\det(\cdot)\right)^{1/n}$. }
\begin{equation}\label{3.1}
\begin{split}
\frac{\partial}{\partial t}F(x,t)&=\frac{\partial}{\partial t}\varphi(x,t)-\frac{\partial}{\partial t}\psi(x,t)\\
&= \int_{0}^{1}\frac{d}{ds}\left(
-\frac{\left(1-|D(s\varphi+(1-s)\psi)|^{2}\right)^{\frac{n+1}{n}}\det^{\frac{1}{n}}(\sigma_{ij})}{\det^{\frac{1}{n}}(f_{ij})}\right)ds.
\end{split}
\end{equation}
We do some calculations to get the derivative in the integral of
(\ref{3.1}). Using the formula for the derivative of determinant, we
have
\begin{equation*}
\begin{split}
\frac{d}{ds}{\det}^{\frac{1}{n}}(f_{ij})&=
\frac{1}{n}\left(\det(f_{ij})\right)^{\frac{1}{n}-1}\cdot \det(f_{ij})\cdot f^{ij}\cdot\frac{d}{ds}f_{ij}\\
&=
\frac{1}{n}{\det}^{\frac{1}{n}}(f_{ij})\cdot f^{ij}\cdot\left(\varphi_{ij}-\varphi_{i}\varphi_{j}-\psi_{ij}+\psi_{i}\psi_{j}\right)\\
&= \frac{1}{n}{\det}^{\frac{1}{n}}(f_{ij})\cdot
f^{ij}\cdot\left(F_{ij}-F_{i}(\varphi_{j}+\psi_{j})\right),
\end{split}
\end{equation*}
with $f^{ij}$ the inverse of $f_{ij}$, which is also positive. Then
the symmetry of $f^{ij}$ yields
\begin{equation*}
f^{ij}(\psi_{i}\psi_{j}-\varphi_{i}\varphi_{j})=f^{ij}(\psi_{i}-\varphi_{i})(\varphi_{j}+\psi_{j})=-f^{ij}F_{i}(\varphi_{j}+\psi_{j}).
\end{equation*}
Hence,
\begin{equation*}
\begin{split}
&\frac{d}{ds}\left(1-|D(s\varphi+(1-s)\psi)|^{2}\right)^{\frac{n+1}{n}}\\
&=
-\frac{n+1}{n}\left(1-|D(s\varphi+(1-s)\psi)|^{2}\right)^{\frac{1}{n}}\cdot\frac{d}{ds}|D(s\varphi+(1-s)\psi)|^{2}\\
&=
-2\frac{n+1}{n}\left(1-|D(s\varphi+(1-s)\psi)|^{2}\right)^{\frac{1}{n}}\cdot\sigma^{ij}F_{i}(s\varphi_{j}+(1-s)\psi_{j}).
\end{split}
\end{equation*}

Based on these calculations, the derivative in the integral of
(\ref{3.1}) may be rewritten as
\begin{equation*}
\begin{split}
&\frac{d}{ds}\left(
-\frac{\left(1-|D\left(s\varphi+(1-s)\psi\right)|^{2}\right)^{\frac{n+1}{n}}\det^{\frac{1}{n}}(\sigma_{ij})}{\det^{\frac{1}{n}}(f_{ij})}\right)\\
&~~ =2\frac{n+1}{n}
\frac{(1-|D(s\varphi+(1-s)\psi)|^{2})^{\frac{1}{n}}\sigma^{ij}F_{i}(s\varphi_{j}+(1-s)\psi_{j})\det^{\frac{1}{n}}(\sigma_{ij})}{\det^{\frac{1}{n}}(f_{ij})}\\
&~~ +\frac{1}{n}
\frac{(1-|D(s\varphi+(1-s)\psi)|^{2})^{\frac{n+1}{n}}\det^{\frac{1}{n}}(\sigma_{ij})}{\det^{\frac{1}{n}}(f_{ij})}f^{ij}\left(F_{ij}-F_{i}(\varphi_{j}+\psi_{j})\right).
\end{split}
\end{equation*}
Introduce the following notation for the positive definite
coefficient matrix of the second derivative
\begin{equation*}
a^{ij}(x,t):=\frac{1}{n}{\det}^{\frac{1}{n}}(\sigma_{kl})
\int_{0}^{1}
\frac{\left(1-|D(s\varphi+(1-s)\psi)|^{2}\right)^{\frac{n+1}{n}}f^{ij}}{{\det}^{\frac{1}{n}}(f_{ij})}ds,
\end{equation*}
and set
\begin{equation*}
\begin{split}
b^{i}(x,t):=-a^{ij}(\varphi_{j}+\psi_{j})&+\frac{2(n+1)}{n}{\det}^{\frac{1}{n}}(\sigma_{kl})\cdot\sigma^{ij}\\
&\cdot \int_{0}^{1}
\frac{\left(1-|D(s\varphi+(1-s)\psi)|^{2}\right)^{\frac{1}{n}}(s\varphi_{j}+(1-s)\psi_{j})}{\det^{\frac{1}{n}}(f_{ij})}ds.
\end{split}
\end{equation*}
Then in view of (\ref{3.1}), one has
\begin{equation*}
\left\{
\begin{aligned}
&\frac{\partial}{\partial t}F(x,t)=a^{ij}F_{ij}+b^{k}F_{k} \quad
&& \mathrm{in} ~M^n\times[0,T],\\
&\nabla_{\mu} F =0  \quad && \mathrm{on} ~ \partial M^n\times[0,T],\\
&F(\cdot,0)\leq 0 \quad && \mathrm{in} ~ M^n.
\end{aligned}
\right.
\end{equation*}
Using the parabolic maximum principle and Hopf's Lemma to the above
system, we know that $F$ has to be non-positive for all $t\in
[0,T]$.
\end{proof}

Now, we have:

\begin{lemma}[\bf$C^0$ estimate]\label{$C^{0}$ estimate}
Let $\varphi$ be a solution of \eqref{Evo-1}. Then
\begin{equation*}
c_{1}\leq u(x, t)e^{t} \leq c_{2}, \qquad\quad \forall~ x\in M^n, \
t\in[0,T],
\end{equation*}
where $c_{1}:=\inf\limits_{M^{n}}u(\cdot,0)$,
$c_{2}:=\sup\limits_{M^{n}}u(\cdot,0)$.
\end{lemma}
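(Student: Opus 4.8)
The plan is to use the comparison principle of Lemma \ref{lemma3.1} together with explicit spatially-constant barrier solutions. First I would observe that if $w=w(t)$ depends on $t$ only, so that $Dw\equiv 0$ and $D^{2}w\equiv 0$, then $v=1$, the induced metric is $g_{ij}=w^{2}\sigma_{ij}$, the second fundamental form is $h_{ij}=w\sigma_{ij}$ by Lemma \ref{lemma2-1}(iii), and hence the Gaussian curvature is $K=\det(h_{ij})/\det(g_{ij})=w^{-n}$, so that $K^{1/n}=w^{-1}$ and $v/K^{1/n}=w$. Consequently the scalar flow equation in \eqref{Eq-} reduces to the ODE $w'=-w$, whose solutions are $w(t)=w(0)e^{-t}$; moreover the Neumann condition $\nabla_{\mu}w=0$ is trivially satisfied, and the associated matrix $\iota_{ij}=\sigma_{ij}$ is positive definite. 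Thus $\underline{u}(x,t):=c_{1}e^{-t}$ and $\overline{u}(x,t):=c_{2}e^{-t}$ are genuine solutions of the scalar problem \eqref{Evo-1} (after passing to $\varphi=\log u$, i.e. the barriers $\log c_{1}-t$ and $\log c_{2}-t$), and by the definition of $c_{1},c_{2}$ we have $\underline{u}(\cdot,0)=c_{1}\leq u_{0}\leq c_{2}=\overline{u}(\cdot,0)$ on $M^{n}$.

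Then I would apply Lemma \ref{lemma3.1} twice: once to the pair $(\log\underline{u},\varphi)$ to obtain $\log\underline{u}\leq\varphi$, and once to the pair $(\varphi,\log\overline{u})$ to obtain $\varphi\leq\log\overline{u}$, on $M^{n}\times[0,T]$. Since $\log$ is monotone, exponentiating these inequalities gives $c_{1}e^{-t}\leq u(x,t)\leq c_{2}e^{-t}$, which, after multiplying through by $e^{t}$, is exactly the asserted bound $c_{1}\leq u(x,t)e^{t}\leq c_{2}$.

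There is essentially no hard step here; the only point requiring care is the curvature computation showing $1/K^{1/n}=u$ for spatially constant graphs, which should be checked against the sign conventions fixed in Section \ref{se2} and Lemma \ref{lemma2-1}, and the verification that the constant-in-space functions genuinely belong to the class of admissible solutions covered by Lemma \ref{lemma3.1}. As an alternative, one may avoid invoking Lemma \ref{lemma3.1} as a black box and instead run the maximum-principle and Hopf-lemma argument of its proof directly on the differences $\varphi(x,t)-(\log c_{2}-t)$ and $(\log c_{1}-t)-\varphi(x,t)$, but the barrier-plus-comparison route is the most economical.
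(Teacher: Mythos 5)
Your proposal is correct and follows essentially the same route as the paper: the authors also take spatially constant barriers (for which the scalar equation reduces to $\dot{\varphi}=-1$, i.e. $\varphi(t)=-t+c$, equivalently $u=u(0)e^{-t}$) and conclude by applying the comparison principle of Lemma \ref{lemma3.1} with $c=\log c_{1}$ and $c=\log c_{2}$. Your explicit verification that $v/K^{1/n}=u$ for constant-in-space graphs is exactly the computation implicit in the paper's reduction to that ODE.
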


\begin{proof}
Let $\varphi(x, t)=\varphi(t)$ (independent of $x$) be  the solution
of \eqref{Evo-1} with $\varphi(0)=c$. In this case, the first
equation in \eqref{Evo-1} reduces to an ODE
\begin{eqnarray*}
\frac{d}{d t}\varphi=-1.
\end{eqnarray*}
Therefore,
\begin{eqnarray}\label{blow}
\varphi(t)=-t+c.
\end{eqnarray}
This lemma is an immediate consequence of Lemma \ref{lemma3.1}.
\end{proof}

\begin{lemma}[\bf$\dot{\varphi}$ estimate]\label{lemma3.2}
Let $\varphi$ be a solution of \eqref{Evo-1}. Then we have
\begin{eqnarray*}
\inf_{M^n}\dot{\varphi}(\cdot, 0) \leq \dot{\varphi}(x, t)\leq
\sup_{M^{n}}\dot{\varphi}(\cdot, 0).
\end{eqnarray*}
\end{lemma}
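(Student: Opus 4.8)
The plan is to differentiate the scalar equation in \eqref{Evo-1} with respect to time and apply the parabolic maximum principle together with Hopf's lemma to the resulting equation for $\dot\varphi$. Writing $Q=Q(D\varphi,D^2\varphi)$, the function $w:=\dot\varphi=\partial_t\varphi$ satisfies, after differentiating $\partial_t\varphi=Q(D\varphi,D^2\varphi)$ in $t$, a linear parabolic equation of the form
\begin{equation*}
\partial_t w = Q^{ij}w_{ij}+Q^{k}w_{k},
\end{equation*}
where $Q^{ij}:=\partial Q/\partial\varphi_{ij}$ and $Q^{k}:=\partial Q/\partial\varphi_{k}$ are evaluated along the solution. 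The key structural fact, already noted in the excerpt right after \eqref{Evo-1}, is that $Q^{ij}=-\tfrac1n\dot\varphi\,\iota^{ij}$ is positive definite as long as the flow stays in the admissible class (i.e.\ $\iota_{ij}>0$ and $\dot\varphi<0$, the latter being clear since $\dot\varphi=-v/K^{1/n}<0$); hence the equation for $w$ is genuinely (degenerate-)parabolic on $[0,T]$ with $T<T^\ast$, and the zeroth-order coefficient vanishes, so no Harnack-type corrections are needed.

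First I would record the boundary condition for $w$. Differentiating the Neumann condition $\nabla_\mu\varphi=\mu^i\varphi_i=0$ on $\partial M^n\times(0,T)$ in $t$ — and using that $\mu=\mu^i(x)\partial_i$ is a fixed (time-independent) vector field — gives $\nabla_\mu w=\mu^i w_i=0$ on $\partial M^n\times(0,T)$. So $w$ solves a homogeneous linear parabolic equation with a homogeneous Neumann boundary condition. Then I would apply the maximum principle: at an interior spatial maximum of $w$ over $\overline{M^n}\times[0,t]$ attained at an interior point, the standard argument gives $\partial_t w\le 0$ there, so $\sup_{M^n}w(\cdot,t)$ is nonincreasing; if the maximum over the parabolic boundary is attained on $\partial M^n\times(0,t]$, Hopf's lemma is contradicted by $\nabla_\mu w=0$ unless $w$ is constant in a neighborhood, which again reduces to the interior case. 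Therefore $\sup_{M^n}w(\cdot,t)\le\sup_{M^n}w(\cdot,0)$ for all $t\in[0,T]$, and symmetrically $\inf_{M^n}w(\cdot,t)\ge\inf_{M^n}w(\cdot,0)$, which is exactly the asserted two-sided bound.

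The main obstacle — really the only point requiring care — is the regularity needed to run the maximum principle on the \emph{derivative} $w=\dot\varphi$: a priori $\varphi$ is only $C^{2+\alpha,1+\alpha/2}$ up to $t=0$, so $\dot\varphi$ and the third spatial derivatives appearing in $\partial_t w=Q^{ij}w_{ij}+Q^kw_k$ are not classically available at the initial time. The standard remedy, which I would invoke, is that by the short-time existence lemma the solution is $C^\infty$ for $t>0$, so the maximum principle argument is legitimate on $[\varepsilon,T]$ for every $\varepsilon>0$; one then lets $\varepsilon\to0$ and uses continuity of $\dot\varphi$ up to $t=0$ (which follows from the $C^{1+\alpha/2}$-in-time regularity) to pass the bounds to the closed interval $[0,T]$. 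An alternative, if one prefers to avoid the limiting argument, is to note that the two-sided bound for $\dot\varphi$ is equivalent to a two-sided bound for the speed $v/K^{1/n}$, but the differentiation-in-time approach is cleaner and is the route I would take; everything else (the sign of $\dot\varphi$, positivity of $Q^{ij}$, vanishing of the zeroth-order term, time-independence of $\mu$) is already in place from the preceding lemmas and the setup.
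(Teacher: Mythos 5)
Your argument is correct and is essentially the proof the paper gives: differentiate the scalar equation in time, observe that $w=\dot\varphi$ satisfies the homogeneous linear parabolic equation $\partial_t w=Q^{ij}w_{ij}+Q^k w_k$ with positive definite $Q^{ij}=-\tfrac1n\dot\varphi\,\iota^{ij}$ and the Neumann condition $\nabla_\mu w=0$ obtained by differentiating $\mu^i\varphi_i=0$ in $t$, then apply the parabolic maximum principle (with Hopf's lemma at the boundary). Your extra remark about running the argument on $[\varepsilon,T]$ and letting $\varepsilon\to0$ only adds care about regularity at $t=0$ that the paper leaves implicit; it does not change the route.
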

\begin{proof}
Set
\begin{eqnarray*}
\mathcal{M}(x,t)=\dot{\varphi}(x, t).
\end{eqnarray*}
Differentiating both sides of the first evolution equation of
\eqref{Evo-1}, it is easy to get that
\begin{equation*} \label{3.4}
\left\{
\begin{aligned}
&\frac{\partial\mathcal{M}}{\partial t}=
Q^{ij}D_{ij}\mathcal{M}+Q^{k}D_k \mathcal{M} \quad
&& \mathrm{in} ~M^n\times(0,T)\\
&\nabla_{\mu}\mathcal{M}=0 \quad && \mathrm{on} ~\partial M^n\times(0,T)\\
&\mathcal{M}(\cdot,0)=\dot{\varphi}_0 \quad && \mathrm{on} ~ M^n,
\end{aligned}
\right.
\end{equation*}
where $Q^{ij}:=\frac{ \partial Q}{\partial
\varphi_{ij}}=-\frac{1}{n}\varphi_{t}\iota^{ij}$
 and $Q^k:=\frac{ \partial Q}{\partial \varphi_{k}}=-\frac{2\varphi_{t}}{n}\left(\frac{n+1}{1-|D\varphi|^{2}}\sigma^{kl}-\iota^{kl}\right)\varphi_{l}$.
Then the result follows from the maximum principle.
\end{proof}

\begin{lemma}[\bf Gradient estimate]\label{Gradient}
Let $\varphi$ be a solution of \eqref{Evo-1}. Then we have
\begin{equation}\label{Gra-est}
|D\varphi|\leq \sup_{M^n}|D\varphi(\cdot, 0)|\leq \rho<1,
\qquad\quad \forall~ x\in M^n, \ t\in[0,T].
\end{equation}
\end{lemma}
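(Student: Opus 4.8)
The plan is to show that $t\mapsto\sup_{M^n}|D\varphi(\cdot,t)|$ is non-increasing. Since $\varphi=\log u$, Lemma~\ref{lemma2-1} gives $D\varphi=u^{-1}Du$, so $|D\varphi(\cdot,0)|=|Du_0|/u_0\le\rho<1$ by the hypothesis of Theorem~\ref{main1.1}, and the monotonicity then yields \eqref{Gra-est}. I would run a maximum-principle argument for $w:=\frac12|D\varphi|^2=\frac12\sigma^{ij}\varphi_i\varphi_j$ on $M^n\times[0,T]$, combining the parabolic maximum principle in the interior with Hopf's boundary point lemma on $\partial M^n$, so as to force the maximum of $w$ onto the slice $\{t=0\}$. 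Two observations make \eqref{Evo-1} parabolic on $[0,T]$: from the explicit form of $Q$ one has $Q<0$, hence $\varphi_t<0$; and admissible solutions are strictly convex ($\iota_{ij}>0$), so $Q^{ij}=-\frac1n\varphi_t\iota^{ij}$ (see Lemma~\ref{lemma3.2}) is positive definite. Write $\mathcal L:=Q^{ij}D_{ij}+Q^kD_k$.

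For the interior step, I would differentiate $\partial_t\varphi=Q$ covariantly, contract with $\varphi^k$, and commute third covariant derivatives of $\varphi$ by the Ricci identity on $\mathscr{H}^n(1)$, which is a space form of constant sectional curvature $-1$ (so $\mathrm{Ric}=-(n-1)\sigma$). This yields an identity of the schematic shape $\partial_t w-\mathcal L w=-Q^{ij}\varphi_{ik}\varphi^k_{\ j}+\big[(\sigma_{ij}Q^{ij})|D\varphi|^2-Q^{ij}\varphi_i\varphi_j\big]$; the first term is $\le 0$, but the bracketed term (for $Q^{ij}=\sigma^{ij}$ it is just $-\mathrm{Ric}(D\varphi,D\varphi)=(n-1)|D\varphi|^2$) is $\ge 0$ — the negative curvature of $\mathscr{H}^n(1)$ works against us here (on a round sphere the sign would be reversed and the argument would close at once). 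To get around this I would pass to the geometric incarnation of $|D\varphi|$ and use the flatness of $\mathbb{R}^{n+1}_1$: a computation with Lemma~\ref{lemma2-1} gives $|D\varphi|^2=1-v^2$ with $v^{-1}=\langle X,\nu\rangle_L/\sqrt{-\langle X,X\rangle_L}$, so bounding $|D\varphi|$ amounts to bounding the ``gradient function'' $v^{-1}$ built from the support function $\langle X,\nu\rangle_L$ and $|X|_L=\sqrt{-\langle X,X\rangle_L}$. Its evolution equation, obtained from $\partial_t X=K^{-1/n}\nu$ together with the Gauss and Weingarten formulae of Section~\ref{se2} and the identity $\overline{\nabla}_Y X=Y$ for the Minkowski position vector field, is parabolic with all ambient-curvature terms absent; hence the troublesome term is gone, and the parabolic maximum principle shows that an interior maximum of $v^{-1}$ (equivalently of $w$) occurs only at $t=0$.

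The boundary estimate is the main obstacle. Suppose the maximum of $w$ over $M^n\times[0,T]$ is attained at $(x_0,t_0)$ with $t_0>0$, $x_0\in\partial M^n$, and is positive (otherwise $w\equiv 0$ and there is nothing to prove). Choose an orthonormal frame at $x_0$ with $e_n$ along $\mu$ and $e_1,\dots,e_{n-1}$ tangent to $\partial M^n$. The boundary condition $\nabla_\mu\varphi=0$ gives $\varphi_n=0$ along $\partial M^n$, so $D\varphi$ is tangent to $\partial M^n$ at $x_0$ and $\nabla_\mu w=\sum_{\alpha<n}\varphi^\alpha\,\nabla^2\varphi(e_\alpha,\mu)$ there. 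Differentiating $\langle D\varphi,\mu\rangle=0$ along $\partial M^n$ in the directions $e_\alpha$ and using $\langle\nabla_{e_\alpha}\mu,e_\beta\rangle=-\langle\mu,\nabla_{e_\alpha}e_\beta\rangle$ gives $\nabla^2\varphi(e_\alpha,\mu)=\sum_{\beta<n}\varphi^\beta\,\mathrm{II}_{\alpha\beta}$, where $\mathrm{II}$ is the second fundamental form of $\partial M^n$ with respect to $\mu$; hence $\nabla_\mu w=\mathrm{II}(D\varphi,D\varphi)$. Fixing the orientation of $\mu$ so that $\mathrm{II}$ is positive definite — which is precisely the convexity of $M^n$ (cf. the Remark following Theorem~\ref{main1.1}) and makes $\mu$ point into $M^n$ — yields $\nabla_\mu w\ge 0$, strictly so unless $D\varphi(x_0,t_0)=0$, which is excluded since $w(x_0,t_0)>0$. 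On the other hand, $v^{-1}$ is a strictly increasing function of $w$ and satisfies a parabolic differential inequality with positive-definite leading coefficients and no ambient-curvature terms, so Hopf's boundary point lemma applies to it and forces its inward-normal derivative — hence also $\nabla_\mu w$ — to be strictly negative at $(x_0,t_0)$: a contradiction. (If the maximum is attained on an open set, the strong maximum principle propagates the constancy back to $\{t=0\}$ and the conclusion still holds.)

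Combining the two steps, $\sup_{M^n\times[0,T]}w$ is attained at $t=0$, so $|D\varphi(x,t)|^2\le\sup_{M^n}|D\varphi(\cdot,0)|^2=\big(\sup_{M^n}|Du_0|/u_0\big)^2\le\rho^2$ for all $(x,t)$, that is, \eqref{Gra-est}. I expect the real difficulty to be the boundary analysis — reconciling the sign of the identity $\nabla_\mu w=\mathrm{II}(D\varphi,D\varphi)$ with Hopf's lemma through the convexity of $M^n$ — whereas the interior step is routine once $|D\varphi|$ has been recast as the ambient-flat quantity $v^{-1}$ so as to dispose of the $\mathscr{H}^n(1)$-curvature term.
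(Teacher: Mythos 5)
Your boundary treatment and your starting identity are exactly the paper's: the paper also works with $\Phi=\tfrac12|D\varphi|^2$, derives
$\partial_t\Phi=Q^{ij}\Phi_{ij}+Q^{k}\Phi_{k}-Q^{ij}(\varphi_i\varphi_j-\sigma_{ij}|D\varphi|^2)-Q^{ij}\varphi_{mi}\varphi^{m}_{j}$,
gets $\nabla_\mu\Phi=-\sum h^{\partial M^n}_{ij}\nabla_{e_i}\varphi\,\nabla_{e_j}\varphi\le0$ on $\partial M^n$ from the convexity of $\partial M^n$ (Marquardt's Lemma 5 argument, which is what you reproduce), and closes with the parabolic maximum principle and Hopf's lemma. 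The divergence is in the interior zeroth-order term: the paper simply asserts that \emph{both} of the last two terms above are non-positive because $Q^{ij}>0$ and concludes directly, whereas you (correctly, as a matter of linear algebra) observe that $-Q^{ij}(\varphi_i\varphi_j-\sigma_{ij}|D\varphi|^2)=\mathrm{tr}_\sigma(Q)\,|D\varphi|^2-Q^{ij}\varphi_i\varphi_j\ge0$, reflecting $\mathrm{Ric}_{\mathscr{H}^n(1)}=-(n-1)\sigma$, and you try to dodge it.

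The genuine gap is the dodge itself. Since $v^{-1}=(1-|D\varphi|^2)^{-1/2}=(1-2\Phi)^{-1/2}$ is a pointwise strictly increasing function of $\Phi$, at any interior spatial maximum one has $D(v^{-1})=0$, $D^2(v^{-1})=v^{-3}D^2\Phi\le0$ and $\partial_t(v^{-1})=v^{-3}\partial_t\Phi$, so whatever parabolic equation you derive for $v^{-1}$ from the extrinsic formulation must, at such a point (and up to the first-order drift from the graphical reparametrization, which vanishes there), be equivalent to the one for $\Phi$. Hence the troublesome term cannot simply be ``gone'': it is not an ambient-curvature term in either computation (the ambient space is flat in both), but comes from commuting covariant derivatives on the base $\mathscr{H}^n(1)$ --- equivalently, in the extrinsic picture, from differentiating the \emph{non-parallel} radial field $X/\sqrt{-\langle X,X\rangle_L}$ entering your $v^{-1}=\langle X,\nu\rangle_L/\sqrt{-\langle X,X\rangle_L}$, in contrast to the constant timelike direction used for the usual gradient function of spacelike graphs over a hyperplane. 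If you instead estimate a quantity that is not a pointwise function of $|D\varphi|$ alone (e.g.\ the support function $\langle X,\nu\rangle_L$ itself), you are no longer bounding $|D\varphi|$, and you supply neither the actual evolution computation nor a mechanism to convert such a bound into the claimed monotonicity $\sup_{M^n}|D\varphi(\cdot,t)|\le\sup_{M^n}|D\varphi(\cdot,0)|$; moreover your Hopf-lemma step at the boundary leans on this unestablished interior inequality, so the argument does not close. In short, your diagnosis of where the difficulty sits is the right one (it is the interior zeroth-order term, not the boundary computation, which is routine), but relabelling $\Phi$ as $v^{-1}$ does not remove that term --- a complete proof has to handle it genuinely, e.g.\ by a modified test function, rather than by the assertion you make.
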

\begin{proof}
Set $\Phi=\frac{|D \varphi|^2}{2}$. By differentiating  $\Phi$, we
have
\begin{equation*}
\begin{aligned}
\frac{\partial \Phi}{\partial t} =\frac{\partial}{\partial
t}\varphi_m \varphi^m = \dot{\varphi}_m\varphi^m =Q_m \varphi^m.
\end{aligned}
\end{equation*}
Then using the evolution equation of $\varphi$ in (\ref{Evo-1})
yields
\begin{eqnarray*}
\frac{\partial \Phi}{\partial t}=Q^{ij}\varphi_{ijm} \varphi^m
+Q^k\varphi_{km} \varphi^m.
\end{eqnarray*}
By the Ricci identity for tensors, we have
\begin{equation*}
\begin{aligned}
\Phi_{ij}&=D_j(\varphi_{mi} \varphi^m)\\&=\varphi_{mij} \varphi^m+\varphi_{mi} \varphi^m_j\\
&=(\varphi_{ijm}+R^l_{imj}\varphi_{l})\varphi^m+\varphi_{mi}\varphi^m_j.
\end{aligned}
\end{equation*}
Therefore, we can express $\varphi_{ijm} \varphi^m$ as
\begin{eqnarray*}
\varphi_{ijm} \varphi^m =\Phi_{ij}-R^l_{imj}\varphi_l
\varphi^m-\varphi_{mi} \varphi^m_j.
\end{eqnarray*}
Then, in view of the fact
$R_{ijml}=\sigma_{il}\sigma_{jm}-\sigma_{im}\sigma_{jl}$ on
$\mathscr{H}^{n}(1)$, we have
\begin{equation}\label{gra}
\begin{aligned}
\frac{\partial \Phi}{\partial t}&=Q^{ij}\Phi_{ij}+Q^k \Phi_k
-Q^{ij}(\varphi_i
\varphi_j-\sigma_{ij}|D\varphi|^2)\\&-Q^{ij}\varphi_{mi}
\varphi^{m}_{j}.
\end{aligned}
\end{equation}
Since the matrix $Q^{ij}$ is positive definite, the third and the
fourth terms in the RHS of \eqref{gra} are non-positive. Since $M^n$
is convex, using a similar argument to the proof of \cite[Lemma
5]{Mar} (see page 1308) implies that
\begin{eqnarray*}
\nabla_{\mu}\Phi=-\sum\limits_{i,j=1}^{n-1}h_{ij}^{\partial
M^{n}}\nabla_{e_i}\varphi\nabla_{e_j}\varphi \leq
0~~~~\qquad\mathrm{on}~\partial M^n\times(0,T),
\end{eqnarray*}
where an orthonormal frame at $x\in\partial M^{n}$, with
$e_{1},\ldots,e_{n-1}\in T_{x}\partial M^{n}$ and $e_{n}:=\mu$, has
been chosen for convenience in the calculation, and
$h_{ij}^{\partial M^{n}}$ is the second fundamental form of the
boundary $\partial M^{n}\subset\Sigma^{n}$.
 So, we can get
\begin{equation*}
\left\{
\begin{aligned}
&\frac{\partial \Phi}{\partial t}\leq Q^{ij}\Phi_{ij}+Q^k\Phi_k
\qquad &&\mathrm{in}~
M^n\times(0,T)\\
&\nabla_{\mu} \Phi \leq 0   && \mathrm{on}~\partial M^n\times(0,T)\\
&\Phi(\cdot,0)=\frac{|D\varphi(\cdot,0)|^2}{2} \qquad
&&\mathrm{in}~M^n.
\end{aligned}\right .\end{equation*}
Using the maximum principle, we have
\begin{equation*}
|D\varphi|\leq \sup_{M^n}|D\varphi(\cdot, 0)|.
\end{equation*}
Since $G_{0}=\{\left(x,u(x,0)\right)|x\in M^{n}\}$ is a spacelike
graph in $\mathbb{R}^{n+1}_{1}$, we have
\begin{equation*}
|D\varphi|\leq \sup_{M^n}|D\varphi(\cdot, 0)|\leq \rho<1,
\qquad\quad \forall~ x\in M^n, \ t\in[0,T].
\end{equation*}
Our proof is finished.
\end{proof}

\begin{remark}
\rm{The gradient estimate in Lemma \ref{Gradient} makes sure that
the evolving graphs $G_{t}:=\{\left(x,u(x,t)\right)|x\in M^{n},0\leq
t\leq T\}$ are spacelike graphs.}
\end{remark}

Combing the gradient estimate (see Lemma \ref{Gradient}) and
$\dot{\varphi}$ estimate (see Lemma \ref{lemma3.2}), we can obtain:
\begin{corollary}
If $\varphi$ satisfies (\ref{Evo-1}), then we have
\begin{equation}\label{det}
0<c_{3}\leq \det(\iota_{ij})\leq c_{4}<+\infty,
\end{equation}
where $c_{3}$ and $c_{4}$ are positive constants independent of
$\varphi$.
\end{corollary}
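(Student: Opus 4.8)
The plan is to extract the bound on $\det(\iota_{ij})$ directly from the flow equation, using only the estimates already established. By the definition of $Q$ in \eqref{Evo-1}, along the flow one has
\[
\dot{\varphi}=Q(D\varphi,D^{2}\varphi)=-(1-|D\varphi|^{2})^{\frac{n+1}{n}}\,\frac{\det^{\frac{1}{n}}(\sigma_{ij})}{\det^{\frac{1}{n}}(\iota_{ij})},
\]
so, as long as $\dot\varphi$ stays away from $0$, we may solve
\[
\det^{\frac{1}{n}}(\iota_{ij})=-\frac{(1-|D\varphi|^{2})^{\frac{n+1}{n}}\det^{\frac{1}{n}}(\sigma_{ij})}{\dot\varphi}.
\]
It then suffices to show that each factor on the right-hand side is pinched between positive constants that do not depend on the particular admissible solution $\varphi$ on $[0,T]$.

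First I would check that $\dot\varphi<0$ and is bounded away from $0$. At $t=0$, strict convexity of $M_{0}^{n}$ (Theorem \ref{main1.1}) makes $\iota_{ij}(\cdot,0)$ positive definite up to $\partial M^{n}$, so on the compact piece $M^{n}$ its determinant is bounded above and below by positive constants; since also $1-|D\varphi_{0}|^{2}\geq 1-\rho^{2}>0$, the function $\dot\varphi(\cdot,0)=Q(D\varphi_{0},D^{2}\varphi_{0})$ is continuous, strictly negative, and takes values in a compact subinterval of $(-\infty,0)$. By the $\dot\varphi$ estimate of Lemma \ref{lemma3.2}, $\inf_{M^{n}}\dot\varphi(\cdot,0)\leq\dot\varphi(x,t)\leq\sup_{M^{n}}\dot\varphi(\cdot,0)$, so there are constants $0<m\leq M<\infty$ with $m\leq-\dot\varphi\leq M$ on $M^{n}\times[0,T]$. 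Next, the gradient estimate of Lemma \ref{Gradient} gives $0<1-\rho^{2}\leq 1-|D\varphi|^{2}\leq 1$, so $(1-|D\varphi|^{2})^{(n+1)/n}$ lies between two positive constants, and finally $\det(\sigma_{ij})$ --- the determinant of the fixed hyperbolic metric restricted to the compact set $M^{n}$ --- is likewise pinched between positive constants. Substituting these three bounds into the displayed identity produces constants $0<c_{3}'\leq c_{4}'<\infty$ with $c_{3}'\leq\det^{1/n}(\iota_{ij})\leq c_{4}'$, and raising to the $n$-th power gives \eqref{det}.

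There is no real obstacle here: the argument is purely algebraic once Lemmas \ref{lemma3.2} and \ref{Gradient} are available, combined with compactness of $M^{n}$. The single point that deserves a sentence of care is the strict negativity $\sup_{M^{n}}\dot\varphi(\cdot,0)<0$, which is exactly what licenses dividing by $\dot\varphi$ above and therefore what yields the upper bound $c_{4}$ on $\det(\iota_{ij})$ (preventing $\iota_{ij}$ from degenerating); symmetrically, finiteness of $\sup_{M^{n}}|\dot\varphi(\cdot,0)|$ yields the lower bound $c_{3}$.
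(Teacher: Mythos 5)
Your proposal is correct and follows essentially the same route the paper intends: the corollary is stated as an immediate consequence of combining Lemma \ref{lemma3.2} and Lemma \ref{Gradient}, which is precisely what you do by solving the evolution equation for $\det^{1/n}(\iota_{ij})$ and pinching $-\dot\varphi$ and $1-|D\varphi|^{2}$ between positive constants determined by the initial data on the compact set $M^{n}$. Your remark that the strict negativity $\sup_{M^{n}}\dot\varphi(\cdot,0)<0$ (from strict convexity of $M_{0}^{n}$ and $|D\varphi_{0}|\leq\rho<1$) is the point that yields the upper bound $c_{4}$ is exactly the care the abbreviated statement in the paper leaves implicit.
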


\section{$C^{2}$ estimates} \label{sec5}

In this section, we will show $C^{2}$ estimates from three aspects
-- interior $C^{2}$-estimates, double normal $C^{2}$-boundary
estimates, and remaining $C^{2}$-boundary estimates. In fact, we can
prove:

\begin{theorem}\label{$C^{2}$ estimate}
Let $\varphi$ be a solution of the flow (\ref{Evo-1}). Then, there
exists $C=C(n, M^{n}_{0})$ such that\footnote{~Clearly,
$C(n,M^{n}_{0})$ stands for a constant depending only on $n$ and
$M^{n}_{0}$. In the sequel, there are many constants appeared, and
we would like to use this way to represent and explain them -- we
mean that we would not count constants by putting numbers in the
subscript except necessary.}
\begin{equation*}
|D^{2}\varphi(x,t)|\leq C(n,M^{n}_{0}) \qquad\qquad \forall(x,t)\in
M^{n}\times[0,T^{\ast}).
\end{equation*}
\end{theorem}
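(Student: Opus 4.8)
The plan is to establish the $C^2$ bound in three stages, following the standard architecture for Neumann-type problems for curvature flows (as in Marquardt and Sani), but adapted to the Lorentzian/hyperbolic geometry. First, the \emph{interior $C^2$-estimate}: one considers an auxiliary function of the form $w = \log(\lambda_{\max}(\iota_{ij})) + \frac{1}{2}|D\varphi|^2$ (or, more robustly, a function built from $\eta^{ij}\xi_i\xi_j$ maximized over unit vectors $\xi$, together with a suitable function of $|D\varphi|^2$ and possibly of $\varphi$ itself to absorb bad terms), and differentiates the evolution equation $\partial_t\varphi = Q(D\varphi,D^2\varphi)$ twice. Using the concavity of $\log\det^{1/n}$ in the matrix entries, the Ricci identity (\ref{Ricci}), the fact that on $\mathscr{H}^n(1)$ one has $\overline R_{ijkl}=\sigma_{il}\sigma_{jk}-\sigma_{ik}\sigma_{jl}$, together with the already-established $C^0$ bound (Lemma~\ref{$C^{0}$ estimate}), the $\dot\varphi$ bound (Lemma~\ref{lemma3.2}), the gradient bound (Lemma~\ref{Gradient}), and the two-sided bound on $\det(\iota_{ij})$ in (\ref{det}), one derives a parabolic differential inequality for $w$ at an interior maximum that forces $w\leq C(n,M^n_0)$. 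The uniform parabolicity needed here comes precisely from (\ref{det}) combined with the gradient estimate, which keep $Q^{ij}=-\tfrac1n\dot\varphi\,\iota^{ij}$ comparable to a fixed positive matrix.

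Second, the \emph{double-normal boundary estimate}: one must bound $\iota_{\mu\mu}=\nabla_\mu\nabla_\mu\varphi - (\nabla_\mu\varphi)^2$ on $\partial M^n$, using the Neumann condition $\nabla_\mu\varphi=0$. Differentiating the boundary condition tangentially twice, and using the convexity of $M^n$ (equivalently, the positive-definiteness of the second fundamental form $h^{\partial M^n}_{ij}$ of $\partial M^n$ with respect to $\mu$, as recorded in Remark after Theorem~\ref{main1.1}), one gets a lower bound on $\iota_{\mu\mu}$ in terms of the tangential data; for the upper bound one uses an auxiliary barrier of the form $\pm\nabla_\mu\varphi - A\,d(x) + B|x-x_0|^2$ near a boundary point $x_0$, where $d$ is the distance to $\partial M^n$, and chooses $A,B$ large depending on the already-controlled interior tangential second derivatives and the geometry of $\partial M^n$. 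The key structural input is again that the convexity of the cone (hence of $M^n$) produces a favorable sign in the boundary terms, exactly as in \cite[Lemma 5]{Mar} and \cite{SMG}.

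Third, the \emph{remaining (mixed and tangential) boundary estimates}: with $\iota_{\mu\mu}$ and the purely tangential second derivatives under control, one bounds the mixed normal-tangential components $\iota_{\mu e}$ for $e\in T\partial M^n$ by a further auxiliary-function argument on the boundary, again using the Neumann condition and the convexity of $\partial M^n$; since the full Hessian is then bounded on $\partial M^n$, and the interior estimate already controls the Hessian in the interior, one concludes $|D^2\varphi|\leq C(n,M^n_0)$ on all of $M^n\times[0,T^\ast)$. The main obstacle, and the reason this section is singled out as the hardest in the paper, is the interior estimate: unlike the mean-curvature (IMCF) case where the operator is roughly the trace of the inverse second fundamental form, here $Q$ involves $\det^{1/n}$, so the commutator and gradient terms produced by differentiating $Q$ twice are genuinely more complicated, and one must exploit the concavity of $F=K^{1/n}$ in the eigenvalues, the ambient curvature terms of $\mathscr{H}^n(1)$ (which, being negative, typically help), and a careful choice of the auxiliary function's lower-order part to absorb the terms that are quadratic in the largest eigenvalue. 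Once the differential inequality closes, the maximum principle (interior) and Hopf's lemma together with the boundary barriers (at the boundary) finish the argument.
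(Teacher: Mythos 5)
Your three-stage skeleton (interior maximum-principle estimate, double-normal barrier estimate, remaining boundary estimates) matches the architecture of the paper, and stages one and two are essentially right: the interior estimate is run on a function of the form $\log(\iota_{11}+\eta_{11}+C)+\tfrac{\lambda}{2}|D\varphi|^{2}$ using the evolution equations for $\iota_{11}$ and $|D\varphi|^{2}$, the bounds of Lemmas \ref{$C^{0}$ estimate}--\ref{Gradient} and \eqref{det}, and the double-normal bound is obtained from a barrier $P=\varphi_{i}q^{i}-Bq$ with $q=d-\vartheta d^{2}$ in a collar, via the maximum principle and a Hopf-type inequality at the boundary (Propositions \ref{proposition 4.7}); your localized barrier $\pm\nabla_{\mu}\varphi-A\,d+B|x-x_{0}|^{2}$ is an acceptable variant of the same idea.

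The genuine gap is in your third stage, which is structured backwards. The mixed components $\varphi_{\mu e}$, $e\in T\partial M^{n}$, are the \emph{easy} part: differentiating $\nabla_{\mu}\varphi=0$ once tangentially gives $\mu^{i}\varphi_{ie}=-\mu^{i}_{,e}\varphi_{i}$, which is bounded directly by the gradient estimate \eqref{Gra-est}; no further auxiliary-function argument is needed for them. What is \emph{not} "under control" after your first two stages is the purely tangential Hessian at boundary points, because your interior estimate is conditional on the maximum of the test function being attained in the interior of $M^{n}$; if the maximum sits on $\partial M^{n}$ with a non-normal maximizing direction, your argument concludes nothing, and you never close this case. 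This is exactly what the paper's Proposition \ref{proposition 4.8} does, following Lions--Trudinger--Urbas \cite{PL} (as in \cite{SMG,Mar}): the correction tensor $\eta_{ij}$ built from $\mu^{q}_{,p}\varphi_{q}$ is inserted into the test function $S$ precisely so that, for a non-tangential maximizing direction $\xi_{0}=\ell\gamma+\sqrt{1-\ell^{2}}\,\mu$, one gets $\iota_{\xi_{0}\xi_{0}}+\eta_{\xi_{0}\xi_{0}}\le\iota_{\mu\mu}+\eta_{\mu\mu}$ and the case reduces to the double-normal estimate, while for a tangential maximizing direction one differentiates the Neumann condition \emph{twice}, uses the strict convexity of $\partial M^{n}$ and the inequality $S_{\mu}\ge 0$ at the maximum to bound $\iota_{\xi_{0}\xi_{0}}$. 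You mention the $\eta$-modified test function as an option in stage one but never deploy it at the boundary, so the tangential boundary case -- the actual "remaining" estimate -- is missing from your proof; as written, the argument does not yield the full bound $|D^{2}\varphi|\le C(n,M^{n}_{0})$ up to $\partial M^{n}$.
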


We notice that (\ref{det}), together with the gradient estimate and
$\iota_{ij}>0$, implies a lower bound on $\varphi_{ij}$. Therefore,
we only need to control $\varphi_{ij}$ from above.

\subsection{Interior $C^{2}$-estimates.}
We consider a continuous function
\begin{equation*}
\Theta(\varphi):=\frac{\partial}{\partial t}\varphi-Q(D\varphi,
D^{2}\varphi)
\end{equation*}
for any $\varphi\in C^{2}(M^{n})$ and let
$\dot{\Psi}=\frac{\partial\Psi}{\partial t}$. Then the linearized
operator of $\Theta$ is given by
\begin{equation*}
\begin{aligned}
\mathcal{L}\Psi&:=\frac{d}{ds}|_{s=0}\Theta(\varphi+s\Psi)\\
&= \dot{\Psi}-Q^{ij}\Psi_{ij}-Q^{k}\Psi_{k},
\end{aligned}
\end{equation*}
where
\begin{equation*}
Q^{ij}:=\frac{\partial
Q}{\partial\varphi_{ij}}=-\frac{1}{n}\dot{\varphi}\iota^{ij},
\end{equation*}
and
\begin{equation*}
Q^{k}:=\frac{\partial
Q}{\partial\varphi_{k}}=-\frac{2\dot{\varphi}}{n}\left(\frac{n+1}{1-|D\varphi|^{2}}\sigma^{kl}-\iota^{kl}\right)\varphi_{l}.
\end{equation*}

First, we prove some equalities on $\mathscr{H}^{n}(1)$ which will
play an important role in computations below.

\begin{lemma}\label{two eqaution}
The following equalities hold on $\mathscr{H}^{n}(1)$:
\begin{equation}\label{f1}
\iota^{kl}\iota_{11kl}-\iota^{kl}\iota_{kl11}=-2\tr\iota^{kl}\varphi_{11}-
2\left(\tr\iota^{kl}-n-\iota^{kl}\varphi_{k}\varphi_{l}\right)-2\iota^{kl}\left(\varphi_{1kl}\varphi_{1}-\varphi_{k11}\varphi_{l}\right),
\end{equation}
\begin{equation}\label{f2}
\iota^{kl}\left(\iota_{11k}\iota_{11l}-\iota_{1k1}\iota_{1l1}\right)=
2\iota^{kl}\iota_{11k}\varphi_{l}\iota_{11}-2\iota_{111}\varphi_{1}-(\iota_{11})^{2}\iota^{kl}\varphi_{k}\varphi_{l}+\iota_{11}(\varphi_{1})^{2}.
\end{equation}
\end{lemma}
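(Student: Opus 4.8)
The plan is to prove both identities as direct local computations on $\mathscr{H}^{n}(1)$. Fix a point $p$, work in a $\sigma$-orthonormal frame with $\sigma$-normal coordinates at $p$, and let the index $1$ denote the distinguished direction, so $\sigma_{ij}=\delta_{ij}$ at $p$. Two substantive inputs do the work: the commutation identity for covariant derivatives (cf.\ the Ricci identity (\ref{Ricci})), which in the form already used in the gradient estimate reads $\varphi_{mij}=\varphi_{ijm}+R^{l}{}_{imj}\varphi_{l}$, together with its analogue for the $(0,2)$-tensor $\iota$; and the fact that $\mathscr{H}^{n}(1)$ has constant curvature, so $R_{ijml}=\sigma_{il}\sigma_{jm}-\sigma_{im}\sigma_{jl}$ and every curvature contraction collapses to a combination of $\sigma$'s, i.e.\ of Kronecker deltas and traces. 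The third, purely algebraic, device is the substitution $\varphi_{ij}=\iota_{ij}-\sigma_{ij}+\varphi_{i}\varphi_{j}$, used to rewrite every surviving Hessian-of-$\varphi$ term in the variables $\iota,D\varphi$ in which (\ref{f1}) and (\ref{f2}) are stated; in particular $\varphi_{11}=\iota_{11}-1+(\varphi_{1})^{2}$ and $\iota^{kl}\iota_{kl}=n$.

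For (\ref{f1}) I would differentiate the defining relation $\iota_{ij}=\varphi_{ij}+\sigma_{ij}-\varphi_{i}\varphi_{j}$ twice. Computing $\iota_{11kl}$ and $\iota_{kl11}$ separately, each equals the corresponding fourth covariant derivative of $\varphi$ — $\varphi_{11kl}$ respectively $\varphi_{kl11}$ — plus terms quadratic in $D\varphi$ times a Hessian, produced by differentiating the $\varphi_{i}\varphi_{j}$ piece (the $\sigma_{ij}$ piece is parallel and drops out). The two fourth-derivative terms differ only through iterated use of the commutation identity; since the curvature is metric-built, each commutation contributes a term of the shape $\sigma\cdot(\text{lower covariant derivative of }\varphi)$, and after contracting the accumulated curvature terms with $\iota^{kl}$ and simplifying with $\varphi_{11}=\iota_{11}-1+(\varphi_{1})^{2}$ and $\iota^{kl}\iota_{kl}=n$ one gets exactly $-2\tr\iota^{kl}\,\varphi_{11}-2(\tr\iota^{kl}-n-\iota^{kl}\varphi_{k}\varphi_{l})$. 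The remaining terms from the $\varphi_{i}\varphi_{j}$ part, after again using the commutation identity to match derivative orders, regroup precisely into $-2\iota^{kl}(\varphi_{1kl}\varphi_{1}-\varphi_{k11}\varphi_{l})$, which is the last summand of (\ref{f1}).

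For (\ref{f2}) I would first compute the first-order (Codazzi-type) error. From $\iota_{11k}=\varphi_{11k}-2\varphi_{1}\varphi_{1k}$ and $\iota_{1k1}=\varphi_{1k1}-\varphi_{1}\varphi_{k1}-\varphi_{11}\varphi_{k}$ one has $E_{k}:=\iota_{11k}-\iota_{1k1}=(\varphi_{11k}-\varphi_{1k1})-\varphi_{1}\varphi_{1k}+\varphi_{11}\varphi_{k}$, where $\varphi_{11k}-\varphi_{1k1}=R^{l}{}_{11k}\varphi_{l}=\varphi_{k}-\delta_{1k}\varphi_{1}$ by the curvature formula; feeding in $\varphi_{ij}=\iota_{ij}-\sigma_{ij}+\varphi_{i}\varphi_{j}$ turns $E_{k}$ into an explicit combination of $\iota_{11}\varphi_{k}$, $\varphi_{1}\iota_{1k}$ and $(\varphi_{1})^{2}\varphi_{k}$. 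Writing $\iota_{11k}=\iota_{1k1}+E_{k}$ and expanding, the symmetry of $\iota^{kl}$ gives $\iota^{kl}(\iota_{11k}\iota_{11l}-\iota_{1k1}\iota_{1l1})=2\iota^{kl}\iota_{1k1}E_{l}+\iota^{kl}E_{k}E_{l}$; substituting the expression for $E_{l}$, contracting $\iota_{1k}$-pieces via $\iota^{kl}\iota_{1k}=\delta^{l}_{1}$, and rewriting $\iota_{1k1}=\iota_{11k}-E_{k}$ where convenient, the whole expression collapses to $2\iota^{kl}\iota_{11k}\varphi_{l}\iota_{11}-2\iota_{111}\varphi_{1}-(\iota_{11})^{2}\iota^{kl}\varphi_{k}\varphi_{l}+\iota_{11}(\varphi_{1})^{2}$, i.e.\ (\ref{f2}).

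The only genuine difficulty I anticipate is the bookkeeping, not the mathematics: constant curvature forces every curvature contraction to degenerate into metric terms, so both identities are essentially determined, but one must track signs and the order of the covariant-derivative indices carefully (the paper itself repeatedly flags its sign conventions) and be systematic about which Hessian terms get converted via $\varphi_{ij}=\iota_{ij}-\sigma_{ij}+\varphi_{i}\varphi_{j}$ so that the output matches (\ref{f1}) and (\ref{f2}) verbatim. One could instead deduce these from the Simons-type commutation identity for the second fundamental form, since $\iota_{ij}=\frac{v}{u}h_{ij}$ by Lemma \ref{lemma2-1}(iii), but the direct computation on the base hyperbolic space is shorter.
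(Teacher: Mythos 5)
Your proposal is correct and follows essentially the same route as the paper: for (\ref{f1}) you commute the fourth covariant derivatives via the Ricci identity, using that the curvature of $\mathscr{H}^{n}(1)$ is parallel and of constant-curvature form, then contract with $\iota^{kl}$ and convert $\iota^{kl}\varphi_{kl}$ through $\varphi_{ij}=\iota_{ij}-\sigma_{ij}+\varphi_{i}\varphi_{j}$; for (\ref{f2}) your error term $E_{k}=\iota_{11}\varphi_{k}-\varphi_{1}\iota_{1k}$ is exactly the paper's relation (\ref{f4}), and the expansion $2\iota^{kl}\iota_{1k1}E_{l}+\iota^{kl}E_{k}E_{l}$ together with $\iota^{kl}\iota_{1l}=\delta^{k}_{1}$ reproduces the paper's algebra verbatim.
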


\begin{proof}
By the Ricci identity for tensors and in view of the fact
$R_{ijml}=\sigma_{il}\sigma_{jm}-\sigma_{im}\sigma_{jl}$ on
$\mathscr{H}^{n}(1)$, we have
\begin{equation*}
\varphi_{11k}=\varphi_{1k1}+\varphi_{k}-\sigma_{1k}\varphi_{1}=\varphi_{k11}+\varphi_{k}-\sigma_{1k}\varphi_{1}.
\end{equation*}
Rewriting it as
\begin{equation}\label{f3}
\iota_{11k}=\varphi_{k11}+\varphi_{k}-\sigma_{1k}\varphi_{1}-2\varphi_{1}\varphi_{1k}.
\end{equation}
Since the covariant derivatives of the curvature tensor for
$\mathscr{H}^{n}(1)$ vanish, by the Ricci identity for tensors, we
have
\begin{equation*}
\begin{aligned}
\varphi_{11kl}&=\left(\varphi_{k11}+R_{11k}^{m}\varphi_{m}\right)_{l}\\
&=
\varphi_{k11l}+R_{11k}^{m}\varphi_{ml}\\
&=
\varphi_{k1l1}+R_{11l}^{m}\varphi_{km}+R_{k1l}^{m}\varphi_{1m}+R_{11k}^{m}\varphi_{ml}\\
&=
\left(\varphi_{kl1}+R_{k1l}^{m}\varphi_{m}\right)_{1}+R_{11l}^{m}\varphi_{km}+R_{k1l}^{m}\varphi_{1m}+R_{11k}^{m}\varphi_{ml}\\
&=
\varphi_{kl11}+R_{11l}^{m}\varphi_{km}+2R_{k1l}^{m}\varphi_{m1}+R_{11k}^{m}\varphi_{ml}.
\end{aligned}
\end{equation*}
It follows that
\begin{equation*}
\begin{aligned}
\iota^{kl}\iota_{11kl}&=
\iota^{kl}\left(\varphi_{11kl}-(\varphi_{1}^{2})_{kl}\right)\\
&=
\iota^{kl}\iota_{kl11}+\iota^{kl}\left(2R_{k1l}^{m}\varphi_{m1}+2R_{11l}^{m}\varphi_{km}-2\varphi_{1kl}\varphi_{1}+2\varphi_{k11}\varphi_{l}\right)\\
&=
\iota^{kl}\iota_{kl11}-2\tr\iota^{kl}\varphi_{11}+2\iota^{kl}\varphi_{kl}-2\iota^{kl}(\varphi_{1kl}\varphi_{1}-\varphi_{k11}\varphi_{l})\\
&=
\iota^{kl}\iota_{kl11}-2\tr\iota^{kl}\varphi_{11}-2\left(\tr\iota^{kl}-n-\iota^{kl}\varphi_{k}\varphi_{l}\right)-2\iota^{kl}\left(\varphi_{1kl}\varphi_{1}-\varphi_{k11}\varphi_{l}\right),
\end{aligned}
\end{equation*}
with $\mathrm{tr}$ the trace operator, which implies the equality
(\ref{f1}).

Now, we pursue the second equality. We can rewrite (\ref{f3}) as
\begin{equation}\label{f4}
\iota_{1k1}=\iota_{11k}-\iota_{11}\varphi_{k}+\iota_{1k}\varphi_{1}.
\end{equation}
Thus,
\begin{equation*}
\begin{aligned}
&\quad
\iota^{kl}(\iota_{11k}\iota_{11l}-\iota_{1k1}\iota_{1l1})\\
&=
\iota^{kl}\iota_{11k}\iota_{11l}\\
&\quad
-\iota^{kl}(\iota_{11k}-\iota_{11}\varphi_{k}+\iota_{1k}\varphi_{1})(\iota_{11l}-\iota_{11}\varphi_{l}+\iota_{1l}\varphi_{1})\\
&=
-2\iota^{kl}\iota_{11k}(-\iota_{11}\varphi_{l}+\iota_{1l}\varphi_{1})\\
&\quad
-\iota^{kl}(-\iota_{11}\varphi_{k}+\iota_{1k}\varphi_{1})(-\iota_{11}\varphi_{l}+\iota_{1l}\varphi_{1})\\
&=
2\iota^{kl}\iota_{11k}\varphi_{l}\iota_{11}-2\iota_{111}\varphi_{1}-(\iota_{11})^{2}\iota^{kl}\varphi_{k}\varphi_{l}+\iota_{11}(\varphi_{1})^{2},
\end{aligned}
\end{equation*}
which finishes the proof of (\ref{f2}).
\end{proof}

\begin{remark}
\rm{Although the equality $\mathrm{(\ref{f2})}$ was also obtained in
\cite{SMG}, the great difference between ours and the one in
\cite{SMG} is rewriting $\mathrm{(\ref{f3})}$ as another form
$\mathrm{(\ref{f4})}$. This improvement simplifies the calculation
in our paper.}
\end{remark}

\begin{lemma}\label{evolution equations}
Under the flow (\ref{Evo-1}), the following evolution equations hold
true

\begin{equation}\label{evolu 1}
\begin{aligned}
\mathcal{L}\left(\frac{1}{2}|D\varphi|^{2}\right)&=\frac{1}{n}\dot{\varphi}
\left((1-|D\varphi|^{2})\iota^{ij}\sigma_{ij}
 -  (1-|D\varphi|^{2})\iota^{ij}\varphi_{i}\varphi_{j} + \Delta\varphi
 + |D\varphi|^{2} - n\right),
\end{aligned}
\end{equation}
\begin{equation}\label{evolu 2}
\begin{aligned}
\mathcal{L}\iota_{11}&=\frac{(\dot{\varphi}_{1})^{2}}{\dot{\varphi}} - \frac{1}{n}\dot{\varphi}\iota^{kl}_{,1}\iota_{kl,1} - \frac{4(n+1)\dot{\varphi}}{n}\frac{1}{(1-|D\varphi|^{2})^{2}}(\sigma^{kl}\varphi_{k}\varphi{l1})^{2}\\
&\quad
-\frac{2(n+1)\dot{\varphi}}{n}\frac{1}{1-|D\varphi|^{2}}\sigma^{kl}\varphi_{k1}
\varphi_{l1} - \frac{2\dot{\varphi}}{n}\frac{(n+1)}{1-|D\varphi|^{2}}\left( (\varphi_{1})^{2} - |D\varphi|^{2} \right)\\
&\quad - \frac{2}{n}\dot{\varphi}(\iota_{11}\tr\iota^{kl} - n),
\end{aligned}
\end{equation}
\begin{equation}\label{evolu 3}
\begin{aligned}
\mathcal{L}(\tau^{l}\varphi_{l})&=-\frac{1}{n}\dot{\varphi}\iota^{ij}
\left( \sigma_{ij}\varphi_{l}\tau^{l} -
\sigma_{il}\varphi_{j}\tau^{l}
 - 2\varphi_{li}\tau^{l}_{,j} - \varphi_{l}\tau^{l}_{,ij}
  + 2\varphi_{j}\varphi_{l}\tau^{l}_{,i}\right)\\
  &\quad
  + \frac{2\dot{\varphi}}{n}\frac{n+1}{1-|D\varphi|^{2}}\varphi^{k}\varphi_{l}
  \tau^{l}_{,k},
\end{aligned}
\end{equation}
where $\Delta$ is the Laplace of $D$ and $\tau^{i}:
\overline{M^{n}}\to \mathbb{R}$ is a smooth function that does not
depend on $\varphi$.

\end{lemma}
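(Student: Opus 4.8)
The plan is to derive the three evolution equations in Lemma \ref{evolution equations} by directly applying the linearized operator $\mathcal{L}\Psi=\dot{\Psi}-Q^{ij}\Psi_{ij}-Q^k\Psi_k$ to each of the three test functions, using the explicit expressions $Q^{ij}=-\frac{1}{n}\dot{\varphi}\iota^{ij}$ and $Q^k=-\frac{2\dot{\varphi}}{n}\left(\frac{n+1}{1-|D\varphi|^2}\sigma^{kl}-\iota^{kl}\right)\varphi_l$, together with the structure equation $R_{ijml}=\sigma_{il}\sigma_{jm}-\sigma_{im}\sigma_{jl}$ on $\mathscr{H}^n(1)$ and the commutator identities of Lemma \ref{two eqaution}. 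The strategy for each is the same: compute the time derivative $\dot{\Psi}$ by differentiating $\varphi_t=Q$ (hence $\dot{\varphi}_k=Q_k=Q^{ij}\varphi_{ijk}+Q^l\varphi_{lk}$ and similarly for second derivatives), compute the spatial derivatives $\Psi_{ij}$ and $\Psi_k$ by the Leibniz rule, and then cancel the ``bad'' third-derivative-of-$\varphi$ terms between the two. The Ricci identity on a space form contributes the curvature corrections, which on $\mathscr{H}^n(1)$ become explicit $\sigma$-terms, and these are what produce the zeroth-order quantities $\sigma_{ij}$, $n$, $|D\varphi|^2$ appearing on the right-hand sides.

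For \eqref{evolu 1}, I would set $\Psi=\frac{1}{2}|D\varphi|^2=\frac{1}{2}\varphi_m\varphi^m$. Then $\dot{\Psi}=\dot{\varphi}_m\varphi^m=Q_m\varphi^m$, and $\Psi_{ij}=\varphi_{mij}\varphi^m+\varphi_{mi}\varphi_j{}^m$; commuting $\varphi_{mij}\to\varphi_{ijm}$ via the Ricci identity and the space-form curvature gives exactly the $\Phi_{ij}$-type expansion already recorded in \eqref{gra} in the gradient estimate. Substituting $Q^{ij}\Psi_{ij}$ and $Q^k\Psi_k$, the terms $Q^{ij}\varphi_{ijm}\varphi^m+Q^k\varphi_{km}\varphi^m$ recombine into $\dot{\varphi}_m\varphi^m$ and cancel against $\dot{\Psi}$, leaving the curvature term $-Q^{ij}(\varphi_i\varphi_j-\sigma_{ij}|D\varphi|^2)$ and the quadratic term $-Q^{ij}\varphi_{mi}\varphi^m{}_j$. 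Plugging $Q^{ij}=-\frac1n\dot\varphi\iota^{ij}$ and using $\iota_{ij}=\varphi_{ij}+\sigma_{ij}-\varphi_i\varphi_j$ to rewrite $\iota^{ij}\varphi_{mi}\varphi^m{}_j$ in terms of $\iota^{ij}\sigma_{ij}$, $\Delta\varphi$ and lower-order terms yields the stated formula. For \eqref{evolu 3} with $\Psi=\tau^l\varphi_l$ (the function $\tau^l$ being $\varphi$-independent), essentially the same bookkeeping applies, the only new feature being derivatives falling on $\tau^l$, which produce the terms $\tau^l_{,i},\tau^l_{,j},\tau^l_{,ij}$ but no curvature subtlety beyond the one already needed.

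The hard part will be \eqref{evolu 2}, the evolution of $\iota_{11}=\varphi_{11}+\sigma_{11}-(\varphi_1)^2$, because here one must commute \emph{fourth} derivatives of $\varphi$ past each other and this is where Lemma \ref{two eqaution} is essential. I would compute $\dot{\iota}_{11}=\dot{\varphi}_{11}-2\varphi_1\dot{\varphi}_1$, using $\dot{\varphi}_{11}=(Q^{ij}\varphi_{ij}+Q^k\varphi_k)_{11}$ and the analogous formula for $\dot\varphi_1$; the leading term is $Q^{ij}\varphi_{ij11}+(\text{derivatives of }Q^{ij},Q^k)$. On the other side, $Q^{ij}(\iota_{11})_{ij}=-\frac1n\dot\varphi\,\iota^{ij}\iota_{11ij}$, and the identity \eqref{f1} converts $\iota^{ij}\iota_{11ij}$ into $\iota^{ij}\iota_{ij11}$ plus explicit correction terms; note $\iota^{ij}\iota_{ij11}$ is exactly what appears when one differentiates $\det^{1/n}\iota$ twice, i.e.\ it matches the fourth-derivative term coming from $\dot{\iota}_{11}$ after differentiating $Q$. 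The cancellation of these matched fourth-order terms leaves a term $-\frac1n\dot\varphi\,\iota^{kl}_{,1}\iota_{kl,1}$ (the ``good'' negative term coming from the concavity/second variation of $\log\det$), the term $\frac{(\dot\varphi_1)^2}{\dot\varphi}$ from differentiating $Q^{ij}$ once in each of the two indices, the $Q^k$-contributions which after using \eqref{f2} and the derivative of $Q^k$ assemble into the $\frac{n+1}{1-|D\varphi|^2}$-terms, and finally the curvature/zeroth-order terms $-\frac2n\dot\varphi(\iota_{11}\tr\iota^{kl}-n)$ and $-\frac{2\dot\varphi}{n}\frac{n+1}{1-|D\varphi|^2}((\varphi_1)^2-|D\varphi|^2)$ coming from the $\sigma$-corrections in \eqref{f1}, \eqref{f3}, \eqref{f4}. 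Keeping track of all the first-derivative-of-$Q^k$ terms and showing they package exactly into the displayed coefficients is the most error-prone bookkeeping; I would organize it by first isolating all fourth-order-in-$\varphi$ terms and verifying their cancellation, then all third-order terms (these must cancel using \eqref{f2} and the Codazzi-type symmetry), then collecting the remaining second- and lower-order terms.
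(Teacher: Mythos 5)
Your proposal follows essentially the same route as the paper: apply the linearized operator $\mathcal{L}$ directly to each quantity, commute derivatives via the Ricci identity with the explicit space-form curvature of $\mathscr{H}^{n}(1)$, cancel the matched top-order terms arising from $\dot{\varphi}_{1}=Q^{ij}\varphi_{ij1}+Q^{k}\varphi_{k1}$, and invoke Lemma \ref{two eqaution} for the fourth-order commutation in \eqref{evolu 2}. The only slips are cosmetic: for \eqref{evolu 2} the paper uses \eqref{f1} together with the commutation identity \eqref{f3} (not \eqref{f2}, which only enters later in the interior $C^{2}$ estimate) to cancel the terms linear in third derivatives against $Q^{k}\iota_{11,k}$, and your shorthand $\dot{\varphi}_{11}=(Q^{ij}\varphi_{ij}+Q^{k}\varphi_{k})_{11}$ should read $\dot{\varphi}_{11}=D_{1}\left(Q^{ij}\varphi_{ij1}+Q^{k}\varphi_{k1}\right)$.
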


\begin{proof}
Clearly,
$$\mathcal{L}(\frac{1}{2}|D\varphi|^{2})=\sigma^{kl}\varphi_{k}\dot{\varphi}_{l}-Q^{ij}\left(\frac{1}{2}|D\varphi|^{2}\right)_{ij}-Q^{k}\left(\frac{1}{2}|D\varphi|^{2}\right)_{k}.$$
Using the evolution equation in (\ref{Evo-1}), the first term in the
RHS of the above equation becomes
$$\sigma^{kl}\varphi_{k}\dot{\varphi}_{l}=\sigma^{kl}(Q^{ij}\varphi_{ijl}+Q^{s}\varphi_{sl})\varphi_{k}.$$
Then we have
\begin{equation*}
\begin{aligned}
\varphi_{ijl}\varphi^{l} &=
(\varphi_{lij} + R^{m}_{ijl}\varphi_{m})\varphi^{l} = \varphi_{lij}\varphi^{l} + \sigma_{ij}\varphi_{l}\varphi^{l} - \sigma_{il}\varphi_{j}\varphi^{l}\\
&= (\frac{1}{2}|D\varphi|^{2})_{ij} -
\sigma^{kl}\varphi_{kj}\varphi_{li} + |D\varphi|^{2}\sigma_{ij} -
\varphi_{i}\varphi_{j}.
\end{aligned}
\end{equation*}
Hence,
$$\mathcal{L}(\frac{1}{2}|D\varphi|^{2}) = -Q^{ij}\varphi_{kj}\varphi_{li}\sigma^{kl} +
Q^{ij}(|D\varphi|^{2}\sigma_{ij}-\varphi_{i}\varphi_{j}).$$ It
follows that
\begin{equation*}
\mathcal{L}(\frac{1}{2}|D\varphi|^{2}) =
\frac{1}{n}\dot{\varphi}\left(
(1-|D\varphi|^{2})\iota^{ij}\sigma_{ij} -
(1-|D\varphi|^{2})\iota^{ij}\varphi_{i}\varphi_{j} + \Delta\varphi +
|D\varphi|^{2} - n \right)
\end{equation*}
in view of
$$\iota^{ij}\varphi_{kj} = \iota^{ij}(\iota_{kj} - \sigma_{kj} + \varphi_{k}\varphi_{j})
= \delta^{i}_{k} - \iota^{ij}\sigma_{kj} +
\iota^{ij}\varphi_{k}\varphi_{j}.$$
 This finishes the proof of (\ref{evolu 1}).

Now, we are going to prove (\ref{evolu 2}). Clearly,
$$\mathcal{L}(\iota_{11}) = \dot{\iota}_{11} - Q^{ij}\iota_{11,ij} - Q^{k}\iota_{11,k}.$$
Using the evolution equation in \eqref{Evo-1}, we have
\begin{equation*}
\begin{aligned}
\dot{\iota}_{11} &= \dot{\varphi}_{11} - 2\dot{\varphi}_{1}\varphi_{1}\\
&= -\left( \frac{\dot{\varphi}}{n}\iota^{kl}\iota_{kl,1} +
\frac{\dot{\varphi}}{n}\frac{2(n+1)}{1-|D\varphi|^{2}}\sigma^{kl}\varphi_{k}\varphi_{l1}
\right)_{1} -
2\dot{\varphi}_{1}\varphi_{1}\\
&= \frac{(\dot{\varphi}_{1})^{2}}{\dot{\varphi}} -
\frac{\dot{\varphi}}{n}\iota^{kl}_{,1}\iota_{kl,1} -
\frac{\dot{\varphi}}{n}\iota^{kl}\iota_{kl,11} -
\frac{4(n+1)\dot{\varphi}}{n}\frac{1}{(1-|D\varphi|^{2})^{2}}(\sigma^{kl}\varphi_{k}\varphi_{l1})^{2}\\
&\quad - \frac{2(n+1)\dot{\varphi}}{n}\frac{1}{1-|D\varphi|^{2}}
\sigma^{kl}\left(\varphi_{k1}\varphi_{l1} +
\varphi_{k}\varphi_{l11}\right) - 2\dot{\varphi}_{1}\varphi_{1}.
\end{aligned}
\end{equation*}
Inserting \eqref{f1} into the above equality results in
\begin{equation*}\label{f8}
\begin{aligned}
\dot{\iota}_{11} &= \frac{(\dot{\varphi}_{1})^{2}}{\dot{\varphi}} -
\frac{\dot{\varphi}}{n}\iota^{kl}_{,1}\iota_{kl,1} -
\frac{\dot{\varphi}}{n}\iota^{kl}\iota_{11,kl} -
\frac{2}{n}\dot{\varphi}\tr\iota^{kl}\varphi_{11}
- \frac{2}{n}\dot{\varphi}\left(\tr\iota^{kl} - n - \iota^{kl}\varphi_{k}\varphi_{l}\right)\\
&\quad -
\frac{4(n+1)\dot{\varphi}}{n}\frac{1}{(1-|D\varphi|^{2})^{2}}(\sigma^{kl}\varphi_{k}\varphi_{l1})^{2}
- \frac{2(n+1)\dot{\varphi}}{n}\frac{1}{1-|D\varphi|^{2}}\sigma^{kl}\varphi_{k1}\varphi_{l1}\\
&\quad -
\frac{2\dot{\varphi}}{n}\left(\frac{(n+1)}{1-|D\varphi|^{2}}\sigma^{kl}
- \iota^{kl}\right)\varphi_{l}\varphi_{k11} -
2\left(\dot{\varphi}_{1} +
\frac{1}{n}\dot{\varphi}\iota^{kl}\varphi_{1kl}\right)\varphi_{1}.
\end{aligned}
\end{equation*}
Since
\begin{equation*}
\begin{aligned}
-2\iota^{kl}\varphi_{1kl}\varphi_{1} &=
-2\iota^{kl}\left( \varphi_{kl1} + \sigma_{1k}\varphi_{l} - \sigma_{kl}\varphi_{1} \right)\varphi_{1}\\
&= - 2\iota^{kl}\iota_{kl,1}\varphi_{1} -
4\iota^{kl}\varphi_{k}\varphi_{l1}\varphi_{1} -
2\iota^{kl}\sigma_{k1}\varphi_{l}\varphi_{1} +
2\iota^{kl}\sigma_{kl}(\varphi_{1})^{2},
\end{aligned}
\end{equation*}
 we have
\begin{equation*}
-2(\dot{\varphi}_{1} +
\frac{1}{n}\dot{\varphi}\iota^{kl}\varphi_{1kl})\varphi_{1} =
\frac{4\dot{\varphi}}{n}\frac{(n+1)}{1-|D\varphi|^{2}}\sigma^{kl}\varphi_{k}\varphi_{l1}\varphi_{1}
-\frac{2\dot{\varphi}}{n}\iota^{kl}\left(
2\varphi_{k}\varphi_{l1}\varphi_{1} +
\sigma_{k1}\varphi_{l}\varphi_{1} - \sigma_{kl}(\varphi_{1})^{2}
\right)
\end{equation*}
and
\begin{equation*}
-\frac{2\dot{\varphi}}{n}\left(
\frac{(n+1)}{1-|D\varphi|^{2}}\sigma^{kl}  -
\iota^{kl}\right)\varphi_{l}\varphi_{k11} =
-\frac{2\dot{\varphi}}{n}\left( \frac{(n+1)}{1-|D\varphi|^{2}
}\sigma^{kl}- \iota^{kl} \right)\varphi_{l}\left( \iota_{11k} -
\varphi_{k} + \sigma_{1k}\varphi_{1} + 2\varphi_{1}\varphi_{1k}
\right)
\end{equation*}
in view of \eqref{f3}, which implies
\begin{equation*}
\begin{aligned}
&\quad -\frac{2\dot{\varphi}}{n}\left(
\frac{(n+1)}{1-|D\varphi|^{2}}\sigma^{kl} - \iota^{kl} \right)
\varphi_{l}\varphi_{k11} - Q^{k}\iota_{11,k}\\
&= -\frac{2\dot{\varphi}}{n}\left(
\frac{(n+1)}{1-|D\varphi|^{2}}\sigma^{kl} - \iota^{kl}
\right)\varphi_{l} \left( -\varphi_{k} + \sigma_{1k}\varphi_{1} +
2\varphi_{1}\varphi_{1k} \right).
\end{aligned}
\end{equation*}
Therefore,
\begin{equation*}
\begin{aligned}
\mathcal{L}\iota_{11} &=
\frac{(\dot{\varphi}_{1})^{2}}{\dot{\varphi}} -
\frac{\dot{\varphi}}{n}\iota^{kl}_{,1}\iota_{kl,1} -
\frac{2}{n}\dot{\varphi}\tr\iota^{kl}\varphi_{11} -
\frac{2}{n}\dot{\varphi}\left( \tr\iota^{kl} - n -
\iota^{kl}\varphi_{k}\varphi_{l} \right)\\
&\quad
-\frac{4(n+1)\dot{\varphi}}{n}\frac{1}{(1-|D\varphi|^{2})^{2}}(\sigma^{kl}\varphi_{k}\varphi_{l1})^{2}
-\frac{2(n+1)\dot{\varphi}}{n}\frac{1}{1-|D\varphi|^{2}}\sigma^{kl}\varphi_{k1}\varphi_{l1}\\
&\quad -\frac{2\dot{\varphi}}{n}\frac{(n+1)}{1-|D\varphi|^{2}}\left(
(\varphi_{1})^{2} - |D\varphi|^{2} \right)
-\frac{2}{n}\dot{\varphi}\iota^{kl}\left(
-\sigma_{kl}(\varphi_{1})^{2} + \varphi_{k}\varphi_{l} \right),
\end{aligned}
\end{equation*}
which implies
\begin{equation*}\label{f9}
\begin{aligned}
\mathcal{L}\iota_{11}&=\frac{(\dot{\varphi}_{1})^{2}}{\dot{\varphi}} - \frac{1}{n}\dot{\varphi}\iota^{kl}_{,1}\iota_{kl,1} - \frac{4(n+1)\dot{\varphi}}{n}\frac{1}{(1-|D\varphi|^{2})^{2}}(\sigma^{kl}\varphi_{k}\varphi_{l1})^{2}\\
&\quad
-\frac{2(n+1)\dot{\varphi}}{n}\frac{1}{1-|D\varphi|^{2}}\sigma^{kl}\varphi_{k1}
\varphi_{l1} - \frac{2\dot{\varphi}}{n}\frac{(n+1)}{1-|D\varphi|^{2}}\left( (\varphi_{1})^{2} - |D\varphi|^{2} \right)\\
&\quad - \frac{2}{n}\dot{\varphi}(\iota_{11}\tr\iota^{kl} - n).
\end{aligned}
\end{equation*}

Finally, we prove the third equality. Differentiating the function
$\tau^{l}\varphi_{l}$ twice with $x\in M^{n}$, we have
$$(\tau^{l}\varphi_{l})_{i} = \tau^{l}_{,i}\varphi_{l} + \tau^{l}\varphi_{li},$$
and
$$(\tau^{l}\varphi_{l})_{ij} = \tau^{l}_{,ij}\varphi_{l} + \tau^{l}_{,i}\varphi_{lj}
+ \tau^{l}_{,j}\varphi_{li} + \tau^{l}\varphi_{lij}.$$
Differentiating $\tau^{l}\varphi_{l}$ w.r.t. $t$ yields
\begin{equation*}
\begin{aligned}
(\tau^{l}\varphi_{l})_{t} &= \varphi_{lt}\tau^{l}\\
&=
\left(Q^{ij}\varphi_{ijl} + Q^{k}\varphi_{kl}\right)\tau^{l}\\
&= \left( Q^{ij}\varphi_{lij} + Q^{ij}\varphi_{l}\sigma_{ij} -
Q^{ij}\varphi_{j}\sigma_{il} + Q^{k}\varphi_{kl} \right)\tau^{l}.
\end{aligned}
\end{equation*}
Therefore,
\begin{equation*}
\begin{aligned}
\mathcal{L}(\varphi_{l}\tau^{l}) &=
(\varphi_{l}\tau^{l})_{t} - Q^{ij}(\varphi_{l}\tau^{l})_{ij} - Q^{k}(\varphi_{l}\tau^{l})_{k}\\
&= Q^{ij}\left( \sigma_{ij}\varphi_{l}\tau^{l} -
\sigma_{il}\varphi_{j}\tau^{l} - 2\varphi_{li}\tau^{l}_{,j} -
\varphi_{l}\tau^{l}_{,ij} \right) - Q^{k}\varphi_{l}\tau^{l}_{,k},
\end{aligned}
\end{equation*}
which implies (\ref{evolu 3}) by inserting the expressions of
$Q^{ij}$ and $Q^{k}$ directly.
\end{proof}

\footnote{~This assumption on $\mu$ is just to conveniently judge
the sign of some terms below, which does not conflict with the
property required in Theorem \ref{main1.1}. Besides, without loss of
generality, one can also require that $\mu$ is
future-directed.}Assume further that $\mu$ is a smooth extension of
the future-directed spacelike unit normal to $\partial M^{n}$ that
vanishes outside a tubular neighborhood of $\partial M^{n}$. We
define for $(x, \xi_{1}, \xi_{2},
t)\in\overline{M^{n}}\times\mathbb{R}^{n}\times\mathbb{R}^{n}\times[0,T]$,
$$\eta(x, \xi_{1}, \xi_{2}, t)=\mu^{i}_{,j}\varphi_{i}\left(\langle\xi_{1}, \mu\rangle \zeta^{j}_{2} + \langle\xi_{2}, \mu\rangle \zeta^{j}_{1} \right),$$
where
$$\zeta_{i}=\xi_{i} - \langle\xi_{i}, \mu\rangle \mu$$
indicates the tangential component of the vector $\xi_{i}$, with
$i=1,2$, and where $\langle\cdot, \cdot\rangle$ is the inner product
induced by $\sigma$. Moreover, let $\eta_{ij}(x,t):
\overline{M^{n}}\times[0,T]\to\mathbb{R}^{n}$, with $1\leq i, j\leq
n$, represent the component functions
$$\eta_{ij}(x,t) = \mu^{q}_{,p}\varphi_{q}\left[\sigma_{ki}\mu^{k}(\delta^{p}_{j} - \sigma_{lj}\mu^{l}\mu^{p})
+ \sigma_{kj}\mu^{k}(\delta^{p}_{i} -
\sigma_{li}\mu^{l}\mu^{p})\right]$$ of the symmetric $(0,2)$-tensor
field $\eta$.

\begin{remark}
\rm{$\eta(x, \xi_{1}, \xi_{2}, t)$ is not an important part in the
following interior estimate, but will paly a great role in the
non-normal boundary estimate below.}
\end{remark}

We define a function as follows
$$S(x,\xi, t)=\log\left(\frac{[\iota_{ij}(x,t) + \eta_{ij}(x,t)]\xi^{i}\xi^{j}}{\sigma_{ij}\xi^{i}\xi^{j}}
+ C\right) + \frac{1}{2}\lambda|D\varphi|^{2}$$ for $(x, \xi,
t)\in\overline{M^{n}}\times\mathscr{H}^{n}(1)\times[0,T]$, where $C$
and $\lambda$ are constants which will be chosen later.

\begin{proposition}\label{C2 interior estimate}
Let $\varphi$ be a solution of the flow \eqref{Evo-1}, and assume
that $S$ attains its maximum in
$M^{n}\times\mathscr{H}^{n}(1)\times[0,T]$ for some fixed $T<T^{*}$.
Then, there exists $C=C(n, M^{n}_{0})$ such that
$$\varphi_{ij}\xi^{i}\xi^{j} \leq C(n, M^{n}_{0}),
~~\qquad \forall (x, \xi,
t)\in\overline{M^{n}}\times\mathscr{H}^{n}(1)\times[0,T].$$
\end{proposition}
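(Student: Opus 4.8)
The plan is to apply the parabolic maximum principle to the auxiliary function $S(x,\xi,t)$ over the compact manifold-with-boundary $\overline{M^{n}}\times\mathscr{H}^{n}(1)\times[0,T]$. Since $S$ is assumed to attain its maximum at some point $(x_{0},\xi_{0},t_{0})$, there are three cases to rule out or exploit: (a) $t_{0}=0$, in which case the bound follows directly from the initial data and the strict convexity of $M_{0}^{n}$; (b) $(x_{0},t_{0})$ lies on the parabolic interior $M^{n}\times(0,T]$, which is the case treated in this proposition; and (c) $x_{0}\in\partial M^{n}$, which is deferred to the boundary estimates of the next subsection (this is precisely why the tensor $\eta_{ij}$, which is built from $\mu$, is added to $\iota_{ij}$ inside $S$ — it is irrelevant here but engineered so that $\nabla_{\mu}S\le 0$ on the boundary). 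So in the interior case I would, after a rotation of the frame at $x_{0}$, assume that $\xi_{0}=e_{1}$ is the first coordinate direction and that $\iota_{ij}(x_{0},t_{0})$ is diagonal, reducing the quantity to be controlled to $\iota_{11}$ (equivalently $\varphi_{11}$, since the gradient is already bounded by Lemma~\ref{Gradient}).

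First I would compute $\mathcal{L}S$ at the maximum point. Writing $w:=\log\bigl(\iota_{11}/\sigma_{11}+C\bigr)+\tfrac12\lambda|D\varphi|^{2}$ (with $\eta_{ij}$ dropping out in the interior), at an interior maximum we have $\dot w\ge 0$, $D_k w=0$, and $(D_{ij}w)\le 0$, hence $Q^{ij}D_{ij}w\le 0$ and $\mathcal{L}w=\dot w-Q^{ij}w_{ij}-Q^{k}w_{k}\ge 0$. The task is then to show that $\mathcal{L}S<0$ whenever $\iota_{11}$ is large, forcing an a priori bound. I would expand $\mathcal{L}\log(\iota_{11}/\sigma_{11}+C)$ using the chain rule: it produces $\frac{\mathcal{L}\iota_{11}}{\iota_{11}+C\sigma_{11}}$ plus a ``good'' negative gradient term $+\frac{1}{n}\dot\varphi\,\frac{Q^{ij}(\iota_{11})_i(\iota_{11})_j}{(\iota_{11}+C\sigma_{11})^2}$ — note $\dot\varphi<0$ along the flow (from $\dot\varphi=-1$ on the barrier and Lemma~\ref{lemma3.2} one controls its sign), so this term has the right sign. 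Then I substitute the evolution equation \eqref{evolu 2} for $\mathcal{L}\iota_{11}$ and \eqref{evolu 1} for $\mathcal{L}(\tfrac12|D\varphi|^{2})$, and use the critical-point equation $D_kw=0$, i.e. $\frac{(\iota_{11})_k}{\iota_{11}+C\sigma_{11}}=-\lambda\,\sigma^{lm}\varphi_l\varphi_{mk}$, to replace the first derivatives of $\iota_{11}$ wherever they appear.

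The key structural point is the term $-\frac{2}{n}\dot\varphi\,\iota_{11}\,\tr\iota^{kl}$ in \eqref{evolu 2}: since $\dot\varphi<0$, this is $+\frac{2}{n}|\dot\varphi|\,\iota_{11}\,\tr\iota^{kl}$, which is positive and, after dividing by $\iota_{11}+C\sigma_{11}$, stays bounded below by a positive multiple of $\tr\iota^{kl}$ when $\iota_{11}$ is large. Meanwhile, by the arithmetic–geometric mean inequality and the determinant bound \eqref{det}, $\tr\iota^{kl}\ge n(\det\iota^{kl})^{1/n}=n(\det\iota_{ij})^{-1/n}\ge c>0$, and more importantly $\tr\iota^{kl}$ grows at least like $\iota_{11}$ when $\iota_{11}\to\infty$ (since $\iota^{11}\approx 1/\iota_{11}$ is the smallest eigenvalue of $\iota^{kl}$... wait — rather, when $\iota_{11}$ is the \emph{largest} eigenvalue, $\tr\iota^{kl}=\sum 1/\lambda_i$ need not blow up, so instead I use that $\tr\iota^{kl}\ge (\det\iota^{kl})/(\text{product of the other }\lambda_i^{-1})$; more cleanly, $\iota^{11}\cdot\prod_{i\ge2}\lambda_i = \det\iota^{kl}\cdot\lambda_1 $, hence with the eigenvalues of $\iota_{ij}$ bounded below this forces a relation giving the positive term enough strength). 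The $\lambda|D\varphi|^{2}$ term is there to absorb, via \eqref{evolu 1}, the uncontrolled cross terms (the $\sigma^{kl}\varphi_{k1}\varphi_{l1}$ and mixed second-derivative pieces coming from the chain rule and from substituting the critical equation) by choosing $\lambda$ large depending only on $n$ and the gradient bound, and $C$ large afterwards. I expect the main obstacle to be precisely this bookkeeping: showing that the genuinely bad terms — the positive contributions from $\frac{(\dot\varphi_1)^2}{\dot\varphi}$ (which is $\le 0$, so actually helpful) and from the Ricci-commutator terms hidden in \eqref{f1}–\eqref{f2} — are dominated, after the substitution $D_kw=0$ and the choice of $\lambda$, by the good terms $-\frac{2(n+1)\dot\varphi}{n(1-|D\varphi|^2)}\sigma^{kl}\varphi_{k1}\varphi_{l1}$ (positive, concave-in-$\varphi_{ij}$ friendly) and $+\frac{2}{n}|\dot\varphi|\iota_{11}\tr\iota^{kl}$, uniformly in $\iota_{11}$. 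Once $\mathcal{L}S<0$ is established for $\iota_{11}$ exceeding a constant $C(n,M_0^n)$, the maximum principle yields $\iota_{11}(x_0,t_0)\le C(n,M_0^n)$, hence $S\le C$ everywhere, hence $\varphi_{ij}\xi^i\xi^j\le C(n,M_0^n)$ for all $(x,\xi,t)$ as claimed.
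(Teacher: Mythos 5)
Your outline follows the same skeleton as the paper's proof (the test function $S$, reduction at an interior maximum to a scalar quantity $W=\log(\iota_{11}+\eta_{11}+C)+\tfrac{\lambda}{2}|D\varphi|^{2}$, the evolution identities \eqref{evolu 1}--\eqref{evolu 3}, the first-order conditions at the maximum, and a large choice of $\lambda$), but the sign analysis is inverted exactly at the places where the argument has to close, so as written the sketch does not prove the estimate. Expanding the logarithm gives $\mathcal{L}\log V=\frac{\mathcal{L}(\iota_{11}+\eta_{11})}{V}+Q^{ij}\frac{V_{i}V_{j}}{V^{2}}$ with $V=\iota_{11}+\eta_{11}+C$, and since $Q^{ij}=-\frac{1}{n}\dot{\varphi}\,\iota^{ij}$ is positive definite, the gradient term is nonnegative, i.e.\ \emph{adverse} -- not the ``good negative gradient term'' you claim (your formula carries a spurious extra factor $\dot{\varphi}/n$ that flips its sign). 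Taming this term is the technical heart of the interior estimate: it must be paired with the genuinely favorable term $-\frac{1}{n}\dot{\varphi}\,\iota^{kl}_{,1}\iota_{kl,1}\le 0$ from \eqref{evolu 2}, and the combination $\frac{1}{V}\iota^{kl}_{,1}\iota_{kl,1}+\iota^{kl}\frac{V_{k}V_{l}}{V^{2}}$ is controlled via the inverse-matrix derivative, the identity \eqref{f2} of Lemma \ref{two eqaution}, and the critical-point relations \eqref{f17}--\eqref{f18}; your proposal instead treats this term as free help and locates the difficulty only in the Ricci-commutator terms. Likewise, at the end you list $+\frac{2}{n}|\dot{\varphi}|\iota_{11}\tr\iota^{kl}$ and the $\sigma^{kl}\varphi_{k1}\varphi_{l1}$ term as the ``good terms'' that dominate; but since at an interior maximum $\mathcal{L}W\ge 0$ and the contradiction requires $\mathcal{L}W<0$ for large $\iota_{11}$, positive terms are precisely what must be absorbed. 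In the paper the first (after division by $V$, so of size $\tr\iota^{kl}$) is absorbed by the negative term $\frac{C_{\rho}\lambda}{n}\dot{\varphi}\,\tr\iota^{ij}$ coming from \eqref{evolu 1}, and the second, after the Cauchy--Schwarz reduction \eqref{f13} to $\sigma^{kl}\iota_{k1}\iota_{l1}/V\sim\iota_{11}$, is absorbed by $\frac{\lambda}{n}\dot{\varphi}\,\iota_{ij}\sigma^{ij}$, both for a fixed large $\lambda$. Your unresolved worry about whether $\tr\iota^{kl}$ grows like $\iota_{11}$ is a red herring: no such growth is used (and it is false in general; \eqref{det} only gives $\tr\iota^{kl}\gtrsim\iota_{11}^{1/(n-1)}$), so the sentence ending in ``forces a relation giving the positive term enough strength'' does not supply an actual mechanism.

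A second, smaller gap: you cannot simply say that $\eta_{ij}$ ``drops out in the interior''. The extension $\mu$ (hence $\eta$) vanishes only \emph{outside} a tubular neighborhood of $\partial M^{n}$, so an interior maximum of $S$ may occur where $\eta\neq 0$. The paper therefore keeps $\eta_{11}=\tau^{l}\varphi_{l}+C$ inside $W$, estimates $\mathcal{L}\eta_{11}$ through \eqref{evolu 3}, and has to bound the derivatives $\eta_{11,k}$ that reappear through the first-order conditions (inequality \eqref{f19}); these contributions are of size $\tr\iota^{kl}$ and are absorbed together with the others by the large-$\lambda$ term. Your case splitting (initial time, interior, boundary deferred to the next subsection) and the final maximum-principle conclusion are fine and match the paper's structure, but without correcting the sign bookkeeping and supplying the absorption of the $Q^{ij}V_{i}V_{j}/V^{2}$ and $\iota_{11}\tr\iota^{kl}$ terms, the proposed proof does not go through.
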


\begin{proof}
Assume that $S(x, \xi, t)$ achieves its maximum at $(x_{0}, \xi_{0},
t)\in M^{n}\times\mathscr{H}^{n}(1)\times[0,T]$. Choose Riemannian
normal coordinates at $x_{0}$ such that at this point we have
$$\sigma_{ij}(x_{0}) = \delta_{ij}, ~~\qquad \partial_{k}\sigma_{ij}(x_{0}) = 0.$$
And we further rotate the coordinate system at $(x_{0}, t_{0})$ such
that the matrix $\iota_{ij} + \eta_{ij}$ is diagonal, i.e.,
$$\iota_{ij} + \eta_{ij} = (\iota_{ii} + \eta_{ii})\delta_{ij},$$
with
$$\iota_{nn} + \eta_{nn} \leq \cdots \leq \iota_{22} + \eta_{22} \leq \iota_{11} + \eta_{11}.$$
Thus, since the matrix $\iota_{ij}$ is positive define, we have at
$(x_{0}, t_{0})$,
\begin{equation}\label{f10}
|\iota_{ii}|\leq \iota_{11} + C(M_{0}^{n}) \quad \mathrm{and} \quad
|\iota_{ij}| \leq C(M_{0}^{n}) ~~\quad \mathrm{for}~~i\neq j
\end{equation}
in view of the $C^{1}$-estimate \eqref{Gra-est}. Set $\xi_{1}(x)$
around a neighbor of $x_{0}$ and $\xi_{1}(x_{0}) = \xi_{0}$. At
$(x_{0}, t_{0})$, there holds
\begin{equation*}
\iota_{11} + \eta_{11} = \sup\limits_{\xi\in\mathscr{H}^{n}(1)}
\frac{\left[ \iota_{ij}(x, t) + \eta_{ij}(x, t)
\right]\xi^{i}\xi^{j}} {\sigma_{ij}\xi^{i}\xi^{j}},
\end{equation*}
and in a neighborhood of $(x_{0}, t_{0})$
\begin{equation*}
\iota_{11} + \eta_{11} \leq \sup\limits_{\xi\in\mathscr{H}^{n}(1)}
\frac{\left[ \iota_{ij}(x, t) + \eta_{ij}(x, t)
\right]\xi^{i}\xi^{j}} {\sigma_{ij}\xi^{i}\xi^{j}}.
\end{equation*}
Furthermore, it's easy to check that the covariant (at least up to
the second order) and the first time derivatives of
\begin{equation*}
\frac{\left[ \iota_{ij}(x, t) + \eta_{ij}(x, t)
\right]\xi_{1}^{i}\xi_{1}^{j}} {\sigma_{ij}\xi_{1}^{i}\xi_{1}^{j}}
\end{equation*}
and
$$\iota_{11} + \eta_{11}$$
do coincide at $(x_{0}, t_{0})$ (in normal coordinate). Without loss
of generality, we treat $\iota_{11} + \eta_{11}$ as a scalar and
pretend that $W$ is defined by
\begin{equation*}
W(x,t) = \log (\iota_{11} + \eta_{11} + C) +
\frac{1}{2}\lambda|D\varphi|^{2},
\end{equation*}
which achieves its maximum at $(x_{0}, t_{0})\in M^{n}\times[0, T]$.
Here, noticing that we can choose $C(M_{0}^{n})$ large enough
satisfying
\begin{equation}\label{f11}
0\leq \eta_{11} + C(M_{0}^{n}),
\end{equation}
since $\eta_{11}$ is bounded by the $C^{1}$-estimate
\eqref{Gra-est}.

In the following, we want to compute
\begin{equation*}
\begin{aligned}
\mathcal{L}W &=
\dot{W} - Q^{ij}W_{ij} - Q^{k}W_{k}\\
&= \mathcal{L}\left( \log(\iota_{11} + \eta_{11} + C) \right) +
\frac{1}{2}\lambda\mathcal{L}(|D\varphi|^{2}).
\end{aligned}
\end{equation*}
First, after a simple calculation, we can rewrite the first term of
the RHS of the above equality as follows
\begin{equation*}
\begin{aligned}
&\quad
\mathcal{L}\left( \log(\iota_{11} + \eta_{11} + C) \right)\\
&= \frac{\mathcal{L}\iota_{11}}{\iota_{11} + \eta_{11} + C} +
\frac{\mathcal{L}\eta_{11}}{\iota_{11} + \eta_{11} + C} -
\frac{1}{n}\dot{\varphi}\iota^{ij} \frac{(\iota_{11,i} +
\eta_{11,i})(\iota_{11,j} + \eta_{11,j})}{(\iota_{11} + \eta_{11} +
C)^{2}}.
\end{aligned}
\end{equation*}
Now, we begin to estimate $\mathcal{L}\iota_{11}$ through the
evolution equation \eqref{evolu 2}. Using the Cauchy-Schwarz
inequality, one has
\begin{equation}\label{f12}
\begin{aligned}
&\quad -\frac{2(n+1)\dot{\varphi}}{n}\frac{1}{1-|D\varphi|^{2}}
\left(
\frac{2}{1-|D\varphi|^{2}}(\sigma^{kl}\varphi_{k}\varphi_{l1})^{2}
+ \sigma^{kl}\varphi_{k1}\varphi_{l1} \right)\\
&\leq -\frac{2(n+1)\dot{\varphi}}{n}\frac{1}{(1-|D\varphi|^{2})^{2}}
\sigma^{kl}\varphi_{k1}\varphi_{l1}.
\end{aligned}
\end{equation}
On the other hand,
\begin{equation*}
\sigma^{kl}\varphi_{k1}\varphi_{l1} =
\sigma^{kl}\iota_{k1}\iota_{l1} + \sigma_{11} - 2\iota_{11} -
2(\varphi_{1})^{2}
 + 2\varphi_{1}\sigma^{kl}\iota_{k1}\varphi_{l}
 + (\varphi_{1})^{2}|D\varphi|^{2}.
\end{equation*}
Using \eqref{f10}, together with the $C^{1}$-estimate
\eqref{Gra-est}, the inequality \eqref{f12} becomes
\begin{equation}\label{f13}
\begin{aligned}
&\quad -\frac{2(n+1)\dot{\varphi}}{n}\frac{1}{1-|D\varphi|^{2}}
\left(
\frac{2}{1-|D\varphi|^{2}}(\sigma^{kl}\varphi_{k}\varphi_{l1})^{2}
+ \sigma^{kl}\varphi_{k1}\varphi_{l1} \right)\\
&\leq -\frac{2(n+1)\dot{\varphi}}{n}\frac{1}{(1-|D\varphi|^{2})^{2}}
\left(\sigma^{kl}\iota_{k1}\iota_{l1} - C(M_{0}^{n})\iota_{11} +
C(M_{0}^{n}, \rho)\right).
\end{aligned}
\end{equation}
Inserting \eqref{f13} into $\mathcal{L}\iota_{11}$, abandoning the
non-positive terms and using the $C^{1}$-estimate \eqref{Gra-est}
again, we can obtain
\begin{equation}\label{f14}
\mathcal{L}\iota_{11}\leq
-\frac{2(n+1)\dot{\varphi}}{nC_{\rho}^{2}}\sigma^{kl}\iota_{k1}\iota_{l1
}- C(n, M_{0}^{n}, \rho)\dot{\varphi}(\iota_{11}\tr\iota^{kl} -
\iota_{11} + 1) -
\frac{1}{n}\dot{\varphi}\iota^{kl}_{,1}\iota_{kl,1},
\end{equation}
where $C_{\rho}:= 1-\rho^{2}$.

Next, recalling the equation \eqref{evolu 1} and using the
$C^{1}$-estimate \eqref{Gra-est},
\begin{equation}\label{f15}
\begin{aligned}
&\quad
\lambda\mathcal{L}(\frac{1}{2}|D\varphi|^{2})\\
&=
\frac{\lambda}{n}\dot{\varphi}\left((1-|D\varphi|^{2})\iota^{ij}\sigma_{ij}
- (1-|D\varphi|^{2})\iota^{ij}\varphi_{i}\varphi_{j} +
\iota_{ij}\sigma^{ij}
+ 2|D\varphi|^{2} - 2n \right)\\
&\leq \frac{\lambda}{n}\dot{\varphi} \left( \iota_{ij}\sigma^{ij} -
(1-|D\varphi|^{2})\iota^{ij}\varphi_{i}\varphi_{j} + 2|D\varphi|^{2}
\right) + \frac{C_{\rho}\lambda}{n}\dot{\varphi}\tr\iota^{ij} -
2\lambda\dot{\varphi}.
\end{aligned}
\end{equation}
Then, it follows from \eqref{f14} and \eqref{f15} that
\begin{equation*}
\begin{aligned}
\mathcal{L}W &\leq \frac{1}{\iota_{11} + \eta_{11} + C} \left[
-\frac{2(n+1)\dot{\varphi}}{C_{\rho}^{2}n}\sigma^{kl}\iota_{k1}\iota_{l1}
- C(n, M_{0}^{n}, \rho)\dot{\varphi}(\iota_{11}\tr\iota^{kl} -
\iota_{11} + 1)
 - \frac{1}{n}\dot{\varphi}\iota^{kl}_{,1}\iota_{kl,1} \right]\\
 &\quad
  + \frac{\mathcal{L}\eta_{11}}{\iota_{11} + \eta_{11} + C} - \frac{1}{n}\dot{\varphi}\iota^{ij}\frac{(\iota_{11,i} + \eta_{11,i})(\iota_{11,j} + \eta_{11,j})}{(\iota_{11} + \eta_{11} + C)^{2}}\\
  &\quad
  +\frac{\lambda}{n}\dot{\varphi}\left( \iota_{ij}\sigma^{ij} - (1-|D\varphi|^{2})\iota^{ij}\varphi_{i}\varphi_{j} + 2|D\varphi|^{2} \right)
  + \frac{C_{\rho}\lambda}{n}\dot{\varphi}\tr\iota^{ij} - 2\lambda\dot{\varphi}.
\end{aligned}
\end{equation*}
To make progress, we need to estimate
\begin{equation*}
\begin{aligned}
&\quad -\frac{1}{\iota_{11} + \eta_{11} +
C}\frac{2(n+1)\dot{\varphi}}{C_{\rho}^{2}n}
\sigma^{kl}\iota_{k1}\iota_{l1} + \frac{\lambda\dot{\varphi}}{n}\iota_{ij}\sigma^{ij}\\
&\leq -C(M_{0}^{n})\dot{\varphi}\left( \frac{(\iota_{11} +
C)^{2}}{\iota_{11} + \eta_{11} + C}
 - \lambda\iota_{11} \right)\\
 &\leq
-C(M_{0}^{n})(1-\lambda)\dot{\varphi}\iota_{11}
\end{aligned}
\end{equation*}
in view of \eqref{f10} and \eqref{f11}, where we assume that
$\iota_{11}\geq 1$. Otherwise, $\iota_{11}$ is bounded from above
and our theorem holds true.

Now, we only leave the term $\mathcal{L}\eta_{11}$ to be estimated.
Clearly, $\eta_{11}$ can be written as
$$\eta_{11} = \tau^{l}\varphi_{l} + C,$$
where $\tau^{l}: \overline{M^{n}} \to \mathbb{R}$ does not depend on
$\varphi$. Recalling the equation \eqref{evolu 3}, we have
\begin{equation*}
\begin{aligned}
\mathcal{L}(\tau^{l}\varphi_{l}) &=
 -\frac{1}{n}\dot{\varphi}\iota^{ij}\left( \sigma_{ij}\varphi_{l}\tau^{l} - \sigma_{il}\varphi_{j}\tau^{l} - 2\varphi_{li}\tau^{l}_{,j} - \varphi_{l}\tau^{l}_{,ij} + 2\varphi_{j}\varphi_{l}\tau^{l}_{,i} \right)\\
&\quad
 + \frac{2\dot{\varphi}}{n}\frac{(n+1)}{1-|D\varphi|^{2}}
 \varphi^{k}\varphi_{l}\tau^{l}_{,k},
\end{aligned}
\end{equation*}
by applying
$$\iota^{ij}\varphi_{li} = \delta^{j}_{l} - \iota^{ij}\sigma_{li} + \iota^{ij}\varphi_{l}\varphi_{i}.$$
We can obtain by the $C^{1}$-estimate \eqref{Gra-est} that
$$\mathcal{L}\eta_{11} \leq -C(n, M_{0}^{n}, \rho)\dot{\varphi}(\tr\iota^{ij} + 1).$$
Therefore,
\begin{equation*}
\begin{aligned}
\mathcal{L}W &\leq -\frac{\dot{\varphi}}{\iota_{11} + \eta_{11} +C}
\left( C(n, M_{0}^{n}, \rho)\iota_{11}\tr\iota^{kl} - C(n, M_{0}^{n}, \rho)\iota_{11} +C(n, M_{0}^{n}, \rho) + \frac{1}{n}\iota^{kl}_{,1}\iota_{kl,1} \right) \\
&\quad - \frac{\dot{\varphi}}{\iota_{11} + \eta_{11} + C} \left(
C(n, M_{0}^{n}, \rho)\tr\iota^{ij} + C(n, M_{0}^{n}, \rho) \right) -
\frac{1}{n}\dot{\varphi}\iota^{ij}
\frac{(\iota_{11,i} + \eta_{11,i})(\iota_{11,j} + \eta_{11,j})}{(\iota_{11} + \eta_{11} + C)^{2}} \\
&\quad - \frac{\lambda\dot{\varphi}}{n} \left[
(1-|D\varphi|^{2})\iota^{ij}\varphi_{i}\varphi_{j} - 2|D\varphi|^{2}
\right] + \frac{C_{\rho}\lambda}{n}\dot{\varphi}\tr\iota^{ij} -
2\lambda\dot{\varphi} -
C(M_{0}^{n})(1-\lambda)\dot{\varphi}\iota_{11}.
\end{aligned}
\end{equation*}
The last term left which we have to estimate is
$$-\frac{1}{n}\dot{\varphi}\left( \frac{1}{\iota_{11} + \eta_{11} + C}\iota^{kl}_{,1}\iota_{kl,1} + \iota^{ij}\frac{(\iota_{11,i} + \eta_{11,i})(\iota_{11,j}
+ \eta_{11,j})}{(\iota_{11} + \eta_{11} + C)^{2}} \right).$$
 For
convenience later, set $V=\iota_{11} + \eta_{11} + C$. Then
\begin{equation*}
\begin{aligned}
&\quad
\frac{1}{V}\iota^{kl}_{,1}\iota_{kl,1} + \iota^{kl}\frac{V_{k}V_{l}}{V^{2}}\\
&=
-\frac{1}{V}\iota^{pk}\iota^{ql}\iota_{pq,1}\iota_{kl,1} + \iota^{kl}\frac{V_{k}V_{l}}{V^{2}}\\
&\leq
-\frac{1}{V}\frac{1}{\iota_{11}}\iota^{kl}\iota_{1k,1}\iota_{1l,1} + \iota^{kl}\frac{V_{k}V_{l}}{V^{2}}\\
&= \iota^{kl}\frac{V_{k}V_{l}}{V\iota_{11}} -
\frac{1}{V}\frac{1}{\iota_{11}}\iota^{kl}\iota_{1k,1}\iota_{1l,1} -
\frac{\eta_{11} + C}{V^{2}\iota_{11}}\iota^{kl}V_{k}V_{l}.
\end{aligned}
\end{equation*}
In view of \eqref{f11}, together with the fact that the matrix
$(\iota^{kl})$ is positive definite, we have
$$-\frac{\eta_{11} + C}{V^{2}\iota_{11}}\iota^{kl}V_{k}V_{l}\leq 0.$$
Thus,
$$\frac{1}{V}\iota^{kl}_{,1}\iota_{kl,1} + \iota^{kl}\frac{V_{k}V_{l}}{V^{2}}
\leq \frac{1}{V\iota_{11}}(\iota^{kl}V_{k}V_{l} -
\iota^{kl}\iota_{1k,1}\iota_{1l,1}).$$
  Recalling that
$$\iota^{kl}V_{k}V_{l} = \iota^{kl}(\iota_{11,k}\iota_{11,l} + 2\iota_{11,k}\eta_{11,l} + \eta_{11,k}\eta_{11,l}),$$
 and then
it follows from the equality \eqref{f2}
 that
\begin{equation}\label{f16}
\begin{aligned}
&\quad
\frac{1}{V}\iota^{kl}_{,1}\iota_{kl,1} + \iota^{kl}\frac{V_{k}V_{l}}{V^{2}}\\
&\leq \frac{1}{V\iota_{11}}\left(
2\iota^{kl}\iota_{11,k}\varphi_{l}\iota_{11} -
2\iota_{11,1}\varphi_{1} -
(\iota_{11})^{2}\iota^{kl}\varphi_{k}\varphi_{l} +
\iota_{11}(\varphi_{1})^{2}
\right.\\
 &\quad
 \left.
 + 2\iota^{kl}\iota_{11,k}\eta_{11,l} + \iota^{kl}\eta_{11,k}\eta_{11,l} \right).
\end{aligned}
\end{equation}
Since $W(x,t)$ achieves its maximum at $(x_{0}, t_{0})\in
M^{n}\times[0,T]$, so $W_{i} = 0$ implies
$$W_{i} = \frac{V_{i}}{V} + \lambda\sigma^{kl}\varphi_{k}\varphi_{li} = 0.$$
Therefore,
\begin{equation}\label{f17}
\iota_{11,1} = -\lambda V\sigma^{kl}\varphi_{k}\varphi_{1l} -
\eta_{11,1}
\end{equation}
and
\begin{equation}\label{f18}
\begin{aligned}
\iota^{kl}\iota_{11,k} &= \iota^{kl}(-\lambda V\sigma^{pq}\varphi_{pk}\varphi_{q} - \eta_{11,k})\\
&=
-\lambda V\iota^{kl}\varphi_{pk}\sigma^{pq}\varphi_{q} - \iota^{kl}\eta_{11,k}\\
&=
-\lambda V(\delta^{l}_{p} - \iota^{kl}\sigma_{pk} + \iota^{kl}\varphi_{p}\varphi_{k})\sigma^{pq}\varphi_{q} - \iota^{kl}\eta_{11,k}\\
&= \lambda V\iota^{kl}\varphi_{k}(1-|D\varphi|^{2}) - \lambda
V\sigma^{lp}\varphi_{p} - \iota^{kl}\eta_{11,k}.
\end{aligned}
\end{equation}
Then,  by the $C^{1}$-estimate \eqref{Gra-est}, \eqref{f10},
\eqref{f17} and \eqref{f18}, we have
\begin{equation*}
\begin{aligned}
-2\iota_{11,1}\varphi_{1} &= 2\lambda V\sigma^{kl}\varphi_{k}\varphi_{1}\varphi_{1l} + 2\eta_{11,1}\varphi_{1}\\
&=
2\lambda V\sigma^{kl}\iota_{1l}\varphi_{1}\varphi_{k} - 2\lambda V(\varphi_{1})^{2} + 2\lambda V|D\varphi|^{2}(\varphi_{1})^{2} + 2(\tau^{l}\varphi_{l})_{1}\varphi_{1}\\
&=
2\lambda V\sigma^{kl}\iota_{1l}\varphi_{1}\varphi_{k} - 2\lambda V(\varphi_{1})^{2} + 2\lambda V|D\varphi|^{2}(\varphi_{1})^{2} + 2\tau^{l}_{,1}\varphi_{l}\varphi_{1}\\
&\quad
+ 2\tau^{l}\iota_{l1}\varphi_{1} - 2\tau^{1}\varphi_{1} + 2\tau^{l}\varphi_{l}(\varphi_{1})^{2}\\
&\leq C(M_{0}^{n}, \rho)\lambda V(\iota_{11} + 1)+ C(M_{0}^{n},
\rho)(\iota_{11} + 1)
\end{aligned}
\end{equation*}
and
\begin{equation*}
\begin{aligned}
&\quad \frac{1}{V\iota_{11}}\left(
2\iota^{kl}\iota_{11,k}\varphi_{l}\iota_{11} -
(\iota_{11})^{2}\iota^{kl}\varphi_{k}\varphi_{l} + \iota_{11}(\varphi_{1}^{2}) \right)\\
&\leq 2\lambda(1-|D\varphi|^{2})\iota^{kl}\varphi_{k}\varphi_{l} -
\frac{2}{V}\iota^{kl}\varphi_{l}\eta_{11,k} + \frac{C(\rho)}{V}.
\end{aligned}
\end{equation*}
Thus, combining the above inequalities and the assumption
$\iota_{11}\geq1$, we have
\begin{equation*}
\begin{aligned}
&\quad \frac{1}{V\iota_{11}}\left(
2\iota^{kl}\iota_{11,k}\varphi_{l}\iota_{11}
 - 2\iota_{11,1}\varphi_{1} - (\iota_{11})^{2}\iota^{kl}\varphi_{k}\varphi_{l}
 + \iota_{11}(\varphi_{1})^{2} \right)\\
 &\leq
 2\lambda(1-|D\varphi|^{2})\iota^{kl}\varphi_{k}\varphi_{l} - \frac{2}{V}\iota^{kl}\varphi_{l}\eta_{11,k} + \frac{C(\rho)}{V} + C(M_{0}^{n}, \rho)\lambda\frac{\iota_{11} + 1}{\iota_{11}} + C(M_{0}^{n}, \rho)\frac{\iota_{11} + 1}{V\iota_{11}}\\
 &\leq
 2\lambda(1-|D\varphi|^{2})\iota^{kl}\varphi_{k}\varphi_{l} + \frac{C(M_{0}^{n}, \rho)}{V}(\tr\iota^{kl} + 1) + \frac{C(\rho)}{V} + C(M_{0}^{n}, \rho)\lambda,
\end{aligned}
\end{equation*}
where we have used the following inequality
\begin{equation}\label{f19}
\begin{aligned}
\iota^{kl}\varphi_{l}\eta_{11,k} &= \iota^{kl}\varphi_{l}
(\tau^{i}\varphi_{ik} + \tau^{i}_{,k}\varphi_{i})\\
&= (\delta^{l}_{i} - \iota^{kl}\sigma_{ik} +
\iota^{kl}\varphi_{i}\varphi_{k})
\varphi_{l}\tau^{i} + \iota^{kl}\varphi_{l}\varphi_{i}\tau^{i}_{,k}\\
&\leq C(M_{0}^{n}, \rho)(\tr\iota^{kl} + 1)
\end{aligned}
\end{equation}
to get the second inequality. Now, we estimate the last two terms in
the bracket of RHS of \eqref{f16}. Using \eqref{f18} and the
assumption $\lambda\geq 1$ and $\iota_{11}\geq 1$, we have
\begin{equation*}
\begin{aligned}
&\quad
\frac{1}{V\iota_{11}}\left( 2\iota^{kl}\iota_{11,k}\eta_{11,l} + \iota^{kl}\eta_{11,k}\eta_{11,l} \right)\\
&=
\frac{1}{V\iota_{11}}\left( 2\lambda V\iota^{kl}\varphi_{k}(1-|D\varphi|^{2})\eta_{11,l} - 2\lambda V\sigma^{lp}\varphi_{p}\eta_{11,l} - \iota^{kl}\eta_{11,k}\eta_{11,l} \right)\\
&\leq
\frac{1}{V\iota_{11}}\left( \lambda V C(M_{0}^{n}, \rho)(\tr\iota^{kl} + 1) + C(M_{0}^{n}, \rho)\lambda V(\iota_{11} + 1) \right)\\
&\leq C(M_{0}^{n}, \rho)\lambda\left(
\frac{\tr\iota^{kl}}{\iota_{11}} + 1\right)
\end{aligned}
\end{equation*}
in view of \eqref{f19}, and
\begin{equation*}
\begin{aligned}
\sigma^{lp}\varphi_{p}\eta_{11,l} &=
\sigma^{lp}\varphi_{p}(\tau^{i}\varphi_{il} + \tau^{i}_{,l}\varphi_{i})\\
&=
\sigma^{lp}\iota_{il}\varphi_{p}\tau^{i} - (1-|D\varphi|^{2})\varphi_{i}\tau^{i} + \sigma^{lp}\varphi_{p}\varphi_{i}\tau^{i}_{,l}\\
&\leq C(M_{0}^{n}, \rho)(\iota_{11} + 1).
\end{aligned}
\end{equation*}
Inserting the above equality and \eqref{f17} into \eqref{f16}, we
get at $(x_{0}, t_{0})$,
\begin{equation*}\label{f20}
\begin{aligned}
&\quad
\frac{1}{V}\iota^{kl}_{,1}\iota_{kl,1} + \iota^{kl}\frac{V_{k}V_{l}}{V^{2}}\\
&\leq 2\lambda(1-|D\varphi|^{2})\iota^{kl}\varphi_{k}\varphi_{l} +
\frac{C(M_{0}^{n}, \rho)}{V}(\tr\iota^{kl} + 1) + C(M_{0}^{n},
\rho)\lambda\left( \frac{\tr\iota^{kl}}{\iota_{11}} + 1 \right).
\end{aligned}
\end{equation*}
Thus,
\begin{equation*}
\begin{aligned}
\mathcal{L}W &\leq -\frac{\dot{\varphi}}{V}\left( C(n, M_{0}^{n},
\rho)\iota_{11}\tr\iota^{kl} - C(n, M_{0}^{n}, \rho)\iota_{11}
+ C(n, M_{0}^{n}, \rho) \right) \\
&\quad - \frac{\dot{\varphi}}{V}\left(C(n, M_{0}^{n},
\rho)\tr\iota^{ij} + C(n, M_{0}^{n}, \rho)\right)
-\frac{\lambda\dot{\varphi}}{n}\left[ (1-|D\varphi|^{2})\iota^{ij}\varphi_{i}\varphi_{j} - 2|D\varphi|^{2} \right]\\
&\quad
+ \frac{C_{\rho}\lambda}{n}\dot{\varphi}\tr\iota^{ij} - 2\lambda\dot{\varphi} - C(M_{0}^{n})(1-\lambda)\dot{\varphi}\iota_{11}\\
&\quad
- \frac{\dot{\varphi}}{n}\left[ 2\lambda(1-|D\varphi|^{2})\iota^{kl}\varphi_{k}\varphi_{l} + \frac{C(M_{0}^{n}, \rho)}{V}(\tr\iota^{kl} + 1) + C(M_{0}^{n}, \rho)\lambda(\frac{\tr\iota^{kl}}{\iota_{11}} + 1) \right]\\
&\leq
-\frac{\dot{\varphi}}{V}C(n, M_{0}^{n}, \rho)\left( \iota_{11}(\tr\iota^{kl} - 1) + 1 \right) - \frac{\lambda\dot{\varphi}}{n}\left[ 3(1-|D\varphi|^{2})\iota^{ij}\varphi_{i}\varphi_{j} - 2|D\varphi|^{2} \right]\\
&\quad +\frac{C_{\rho}\lambda}{n}\dot{\varphi}\tr\iota^{ij} -
2\lambda\dot{\varphi} -
C(M_{0}^{n})(1-\lambda)\dot{\varphi}\iota_{11}
-\frac{\dot{\varphi}}{n}C(M_{0}^{n}, \rho)\lambda\left( \frac{\tr\iota^{kl}}{\iota_{11}} + 1\right)\\
&\leq -\frac{\dot{\varphi}}{V}C(n, M_{0}^{n}, \rho)\left(
\iota_{11}(\tr\iota^{kl} - 1) + 1\right) -
\frac{C(M_{0}^{n})\lambda\dot{\varphi}}{n}(\iota_{11} + 1) + \frac{C_{\rho}\lambda}{n}\dot{\varphi}\tr\iota^{ij}\\
&\quad
-2\lambda\dot{\varphi} - C(M_{0}^{n})\dot{\varphi}(1-\lambda)\iota_{11} - \frac{\dot{\varphi}}{n}C(M_{0}^{n}, \rho)\lambda\left( \frac{\tr\iota^{kl}}{\iota_{11}} + 1\right)\\
&\leq -\dot{\varphi}\tr\iota^{kl}\left( \frac{C(M_{0}^{n},
\rho)\lambda}{\iota_{11}} + C(n, M_{0}^{n}, \rho)\iota_{11} -
\frac{C_{\rho}\lambda}{n} \right) - C(n, M_{0}^{n},
\rho)\dot{\varphi}\left(1 + \lambda + (1 -
\frac{1}{2}\lambda)\iota_{11}\right).
\end{aligned}
\end{equation*}
Since $\dot{\varphi}<0$, we take $\lambda$ and $\iota_{11}$ large
enough such that $\frac{C(M_{0}^{n}, \rho)\lambda}{\iota_{11}} +
C(n, M_{0}^{n}, \rho)\iota_{11} - \frac{C_{\rho}\lambda}{n} \leq 0$
(otherwise, $\iota_{11}$ is bounded from above and our theorem holds
true). Then, in view of $\mathcal{L}W \geq 0$, we can obtain
$$\iota_{11} \leq C(n, M_{0}^{n}).$$
Therefore, we conclude that $\iota_{11}$ has an upper bound, which
implies that the second covariant derivatives of $\varphi$ is
bounded from above. This completes the proof.
\end{proof}

\subsection{Double normal $C^{2}$ boundary estimates.}
\ Let

\begin{equation*}
\begin{aligned}
\widetilde{\mathcal{L}}U &= \dot{U} - Q^{ij}U_{ij} +
\frac{2(n+1)}{n}\frac{\dot{\varphi}}{1-|D\varphi|^{2}}\varphi^{k}U_{k}\\
&= \dot{U} + \frac{1}{n}\dot{\varphi}\iota^{ij}U_{ij} +
\frac{2(n+1)}{n}\frac{\dot{\varphi}}{1-|D\varphi|^{2}}\varphi^{k}U_{k}
\end{aligned}
\end{equation*}
and
\begin{equation*}
q(x) = d(x) - \vartheta d^{2}(x).
\end{equation*}
Here, $d$ denotes the distance to $\partial M^{n}$, which is smooth
function in $M_{n}^{\delta} = \{ x\in M^{n}:\mathrm{dist}(x,
\partial M^{n}) < \delta \}$ for $\delta$ small enough, and
$\vartheta$ denotes a constant to be chosen sufficiently large.
Thus, $q: M_{n}^{\delta} \to \mathbb{R}$ is a smooth function.

To derive double normal $C^{2}$ boundary estimates, we need the
following lemma.

\begin{lemma}\label{lemma 4.5}
For any solution $\varphi$ of the flow \eqref{Evo-1}, we can choosen
$\vartheta$ so large and $\delta$ so small such that

\begin{equation*}
\widetilde{\mathcal{L}}q(x) \geq
-\frac{1}{4n}\kappa_{0}\dot{\varphi}\tr(\iota^{ij})   ~\qquad in
~\quad M_{n}^{\delta},
\end{equation*}
where $\kappa_{0}$ is a positive constant depending on $\partial
M^{n}$.
\end{lemma}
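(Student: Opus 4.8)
The plan is to compute $\widetilde{\mathcal{L}}q$ explicitly and show that the favourable term produced by the $-\vartheta d^{2}$ piece, reinforced by the convexity of $M^{n}$, dominates everything else on a fixed collar of $\partial M^{n}$. First I would record the derivatives of $q=d-\vartheta d^{2}$: since $q$ is $t$-independent, $\dot q=0$, while $q_{i}=(1-2\vartheta d)d_{i}$ and $q_{ij}=(1-2\vartheta d)d_{ij}-2\vartheta d_{i}d_{j}$. On $M_{n}^{\delta}$ the eikonal identity $\sigma^{ij}d_{i}d_{j}=1$ holds; differentiating it gives $d^{j}d_{ij}=0$, so $D^{2}d$ annihilates $Dd$ and restricts to each level hypersurface $\{d=\mathrm{const}\}$ as (minus) its second fundamental form with respect to $Dd$.

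\textbf{Key geometric input.} Since $M^{n}$ is a \emph{convex} piece of $\mathscr{H}^{n}(1)$ and $\partial M^{n}$ is compact, $\partial M^{n}$ is uniformly convex; tracking the shape operators of the level sets of $d$ via the Riccati equation in the metric $\sigma$ (which has constant curvature $-1$), there exist $\kappa_{0}=\kappa_{0}(\partial M^{n})>0$ and $\delta>0$ small such that, as symmetric $(0,2)$-tensors on $M_{n}^{\delta}$, $d_{ij}\le-\kappa_{0}(\sigma_{ij}-d_{i}d_{j})$. Shrinking $\delta$ further if necessary, we may also arrange $2\vartheta\delta\le\tfrac12$, so that $\tfrac12\le 1-2\vartheta d\le 1$ on $M_{n}^{\delta}$.

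\textbf{Assembling the estimate.} Recall $-Q^{ij}=\tfrac1n\dot\varphi\,\iota^{ij}$, with $\dot\varphi<0$ and $(\iota^{ij})$ positive definite, and write $\tr\iota^{ij}:=\iota^{ij}\sigma_{ij}$. Using $\sigma_{ij}-d_{i}d_{j}\ge 0$ together with the bound from the previous step,
\[
\iota^{ij}q_{ij}\le-\tfrac{\kappa_{0}}{2}\,\iota^{ij}(\sigma_{ij}-d_{i}d_{j})-2\vartheta\,\iota^{ij}d_{i}d_{j}=-\tfrac{\kappa_{0}}{2}\tr\iota^{ij}-\bigl(2\vartheta-\tfrac{\kappa_{0}}{2}\bigr)\iota^{ij}d_{i}d_{j}\le-\tfrac{\kappa_{0}}{2}\tr\iota^{ij}
\]
as soon as $\vartheta\ge\kappa_{0}/4$, hence $-Q^{ij}q_{ij}=\tfrac1n\dot\varphi\,\iota^{ij}q_{ij}\ge-\tfrac{\kappa_{0}}{2n}\dot\varphi\,\tr\iota^{ij}>0$. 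For the first-order term of $\widetilde{\mathcal{L}}$, the gradient estimate \eqref{Gra-est} gives $1-|D\varphi|^{2}\ge 1-\rho^{2}>0$ and $|\varphi^{k}q_{k}|\le|1-2\vartheta d|\,|\langle D\varphi,Dd\rangle|\le\rho$, so this term has modulus at most $C(n,\rho)|\dot\varphi|$; since \eqref{det} forces $\tr\iota^{ij}\ge n(\det\iota_{ij})^{-1/n}\ge n\,c_{4}^{-1/n}$, it is of lower order and is absorbed into the gain after replacing $\kappa_{0}$ by a slightly smaller constant (and enlarging $\vartheta$, shrinking $\delta$). Together with $\dot q=0$ this yields $\widetilde{\mathcal{L}}q\ge-\tfrac{1}{4n}\kappa_{0}\dot\varphi\,\tr\iota^{ij}$ on $M_{n}^{\delta}$, which is the assertion.

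\textbf{Expected main obstacle.} The delicate point is the geometric input: extracting from "$M^{n}$ convex" the \emph{uniform} matrix inequality $d_{ij}\le-\kappa_{0}(\sigma_{ij}-d_{i}d_{j})$ on a collar whose width $\delta$ does not depend on the solution, i.e. controlling the Hessian of the distance function to $\partial M^{n}$ in the hyperbolic metric. Everything after that is the standard barrier bookkeeping — choosing $\vartheta$ large and $\delta$ small so that the gain $-\tfrac{\kappa_{0}}{2n}\dot\varphi\,\tr\iota^{ij}$ outweighs the first-order term and the loss incurred in passing from $\tfrac{\kappa_{0}}{2}$ to $\tfrac{\kappa_{0}}{4}$.
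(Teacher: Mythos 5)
Your setup reproduces the first half of the paper's argument: the derivatives of $q=d-\vartheta d^{2}$, the uniform convexity of $\partial M^{n}$ giving a Hessian bound for $d$ on a thin collar, and the resulting inequality $\iota^{ij}q_{ij}\le-\tfrac{\kappa_{0}}{2}\tr(\iota^{ij})$ for $\vartheta\ge\kappa_{0}/4$, $\vartheta\delta\le\tfrac14$, which is exactly \eqref{f23}. The genuine gap is in your absorption of the first-order term. Once you discard the extra piece $-(2\vartheta-\tfrac{\kappa_{0}}{2})\iota^{ij}d_{i}d_{j}$, the parameter $\vartheta$ no longer appears in your second-order bound, so ``enlarging $\vartheta$'' does nothing; and the claim that a loss of size $C(n,\rho)|\dot\varphi|$ is ``of lower order'' because $\tr(\iota^{ij})\ge n\,c_{4}^{-1/n}$ and can be absorbed by shrinking $\kappa_{0}$ slightly is not valid. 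To absorb $C(n,\rho)|\dot\varphi|$ into $\bigl(\tfrac{\kappa_{0}}{2n}-\tfrac{\kappa_{0}}{4n}\bigr)|\dot\varphi|\tr(\iota^{ij})$ you would need the pointwise bound $\tr(\iota^{ij})\ge 4nC(n,\rho)/\kappa_{0}$, and the only lower bound available at this stage is the fixed number $n\,c_{4}^{-1/n}$ (no upper bound on $\iota_{ij}$, i.e.\ no lower bound on the eigenvalues of $\iota^{ij}$, may be invoked, since that is precisely what the barrier is meant to prove); nothing guarantees this fixed number exceeds the required threshold, no matter how you redistribute constants. Keeping the term $-2\vartheta\,\iota^{ij}d_{i}d_{j}$ by itself does not rescue the argument either, because the smallest eigenvalue of $(\iota^{ij})$ has no a priori positive lower bound, so that term can be arbitrarily small even for large $\vartheta$.

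The missing idea --- and the real reason the barrier contains the large parameter $\vartheta$ --- is the paper's second estimate of $\iota^{ij}q_{ij}$: retain the normal contribution $-2\vartheta\iota^{nn}$, combine it with the tangential trace through the arithmetic--geometric mean inequality, and then use Hadamard's inequality $\det(\iota^{ij})\le\prod_{i}\iota^{ii}$ together with $\det(\iota^{ij})=(\det\iota_{ij})^{-1}\ge 1/c_{4}$ from \eqref{det} to obtain $\iota^{ij}q_{ij}\le-C(n,\kappa_{0})\,c_{4}^{-1/n}\,\vartheta^{1/n}$. This gain grows with $\vartheta$, while the first-order term remains bounded by $c_{5}(n,M_{0}^{n},\rho)(1+\vartheta\delta)\le\tfrac54 c_{5}$ as long as $\vartheta\delta\le\tfrac14$. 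Splitting $\tfrac1n\dot\varphi\,\iota^{ij}q_{ij}$ into two halves, one half (estimated by the $\vartheta^{1/n}$ bound, with $\vartheta$ chosen large and $\delta\sim\vartheta^{-1}$) dominates the first-order term, and the other half (estimated by \eqref{f23}) yields exactly $-\tfrac{1}{4n}\kappa_{0}\dot\varphi\,\tr(\iota^{ij})$. Without this mechanism your final inequality is not established.
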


\begin{proof}
Differentiating the function $q$ twice w.r.t. $x$ yields
\begin{equation}\label{f21-first}
q_{i}(x) = d_{i}(x) - 2\vartheta d(x)d_{i}(x)
\end{equation}
and
\begin{equation}\label{f22-second}
q_{ij}(x) = d_{ij}(x) - 2\vartheta d_{i}(x)d_{j}(x) - 2\vartheta
d(x)d_{ij}(x).
\end{equation}
For any $x_{0}\in\partial M^{n}$, after a rotation of the first
$n-1$ coordinates and remembering that $\mu(x_{0}) = e_{n}$, we have
\begin{equation*}
d_{ij}(x_{0}) =
\begin{pmatrix}
-\kappa_{1} & 0 & \cdots & 0 \\
\cdots & \cdots & \cdots & \cdots \\
0 & 0 & \cdots & -\kappa_{n-1} \\
0 & 0 & \cdots & 0
\end{pmatrix},
\end{equation*}
where there exists a constant $\kappa_{0} = \kappa_{0}(\partial
M^{n})
> 0$ such that $\kappa_{i} \geq \kappa_{0}$ for all principal
curvatures $\kappa_{i}$ of $\partial M^{n}$, $i = 1, 2, \cdots,
n-1$, and for any $x_{0}\in\partial M^{n}$. Since the differential
of the distance coincides with the past-directed spacelike normal
vector $-Dd(x_{0}) = \mu(x_{0}) = e_{n}$. Thus, it holds at $x_{0}$
\begin{equation*}
q_{ij}(x_{0}) =
\begin{pmatrix}
-\kappa_{1}(1-2\vartheta d) & 0 & \cdots & 0 \\
\cdots & \cdots & \cdots & \cdots \\
0 & 0 & \cdots & -\kappa_{n-1}(1-2\vartheta d) \\
0 & 0 & \cdots & -2\vartheta
\end{pmatrix}.
\end{equation*}
Choosing $\vartheta\delta\leq \frac{1}{4}$, we have
$$\iota^{ij}q_{ij} \leq -\frac{1}{2}\kappa_{0}
\left(\iota^{11}  + \iota^{22} + \cdots + \iota^{(n-1)(n-1)}\right)
- 2\vartheta\iota^{nn}.$$
 On the one hand, we can choose $\vartheta
\geq \frac{1}{4}\kappa_{0}$ such that
\begin{equation}\label{f23}
\iota^{ij}q_{ij} \leq -\frac{1}{2}\kappa_{0}\tr(\iota^{ij}).
\end{equation}
On the other hand, using the inequality of arithmetric and geometric
mean, we obtain
\begin{equation*}
\iota^{ij}q_{ij} \leq -C(n, \kappa_{0})\vartheta^{\frac{1}{n}}\left(
\prod_{i=1}^{n}\iota^{ii} \right)^{\frac{1}{n}}.
\end{equation*}
The Hadamand inequality for positive definite matrices
\begin{equation*}
\det(\iota^{ij}) \leq \left( \prod_{i=1}^{n}\iota^{ii} \right),
\end{equation*}
which implies
\begin{equation*}
\iota^{ij}q_{ij} \leq -C(n, \kappa_{0})
\vartheta^{\frac{1}{n}}{\det}^{\frac{1}{n}}(\iota^{ij}).
\end{equation*}
Recalling \eqref{det}, there is a positive constant $c_{4}$ such
that
\begin{equation*}
\det(\iota^{ij}) =\left(\det(\iota_{ij})\right)^{-1} \geq
\frac{1}{c_{4}}
>0,
\end{equation*}
and then it follows that
\begin{equation*}
\iota^{ij}q_{ij} \leq -\frac{1}{c_{4}}C(n,
\kappa_{0})\vartheta^{\frac{1}{n}}.
\end{equation*}
Using the $C^{1}$-estimate \eqref{Gra-est}, we have
\begin{equation*}
\left| \frac{2(n+1)}{n}\frac{1}{1-|D\varphi|^{2}}\varphi^{k}q_{k}
\right| = \left|
\frac{2(n+1)}{n}\frac{1}{1-|D\varphi|^{2}}\varphi^{k}(d_{k} -
2\vartheta d d_{k}) \right| \leq c_{5}(n, M_{0}^{n}, \rho)(1 +
\vartheta\delta),
\end{equation*}
for all $(x, t)\in M_{n}^{\delta}\times [0,T]$. Choose $\vartheta$
so large and $\delta$ so small such that
\begin{equation*}
\frac{1}{2n}\frac{1}{c_{4}}C(n, \kappa_{0})\vartheta^{\frac{1}{n}}
\geq c_{5}(n, M_{0}^{n}, \rho)(1 + \vartheta\delta).
\end{equation*}
Thus,  from \eqref{f23}, we have
\begin{equation*}
\begin{aligned}
\widetilde{\mathcal{L}}q(x) &=
\frac{1}{2n}\dot{\varphi}\iota^{ij}q_{ij} + \frac{2(n+1)}{n}\frac{\dot{\varphi}}{1-|D\varphi|^{2}}\varphi^{k}q_{k} + \frac{1}{2n}\dot{\varphi}\iota^{ij}q_{ij}\\
&\geq
-\frac{1}{2n}\dot{\varphi}\frac{1}{c_{4}}C(n, \kappa_{0})\vartheta^{\frac{1}{n}} + \dot{\varphi}c_{5}(n, M_{0}^{n}, \rho)(1 + \vartheta\delta) + \frac{1}{2n}\dot{\varphi}\iota^{ij}q_{ij}\\
&\geq -\frac{1}{4n}\kappa_{0}\dot{\varphi}\tr(\iota^{ij}).
\end{aligned}
\end{equation*}
 which finishes the proof of Lemma \ref{lemma 4.5}.
\end{proof}

Clearly, choosing $\frac{1}{8}\leq\vartheta\delta\leq\frac{1}{4}$,
from \eqref{f21-first} and \eqref{f22-second}, we can make sure that
$q$ satisfies the following properties in $M_{n}^{\delta}$:
\begin{equation*}
0 \leq q(x) \leq \delta,
\end{equation*}
\begin{equation}\label{f24}
\frac{1}{2} \leq |Dq| \leq 1,
\end{equation}
\begin{equation}\label{f25}
-\frac{\kappa_{0}}{2}\sigma_{ij} \geq D^{2}q \geq -C(\partial
M^{n})(1 + \vartheta)\sigma_{ij},
\end{equation}
and
\begin{equation}\label{f26}
|D^{3}q| \leq C(\partial M^{n})(1 + \vartheta).
\end{equation}
It's easy to see
\begin{equation*}\label{f27}
\frac{Dq}{|Dq|} = -\mu
\end{equation*}
for  future-directed spacelike unit normal vector $\mu$ on the
boundary $\partial M^{n}$. Consider the following function
\begin{equation*}
P(x,t) = \varphi_{i}q^{i} - Bq(x),
\end{equation*}
where the constant $B$ will be chosen later.

\begin{lemma}\label{lemma 4.6}
For any solution $\varphi$ of the flow \eqref{Evo-1} in
$M_{n}\times[0,T]$ for some fixed $T < T^{*}$, we have
\begin{equation*}
\widetilde{\mathcal{L}}P(x,t) \leq 0.
\end{equation*}
\end{lemma}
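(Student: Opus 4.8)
The plan is to compute $\widetilde{\mathcal L}(\varphi_iq^i)$ explicitly, reduce it — modulo terms controlled by the $C^0$, gradient and determinant estimates already proved — to a single favourable term produced by the convexity of $\partial M^n$, and then absorb the remainder using the subsolution inequality for $q$ from Lemma \ref{lemma 4.5}.

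First I would differentiate $\varphi_iq^i$. Since $q=q(x)$ is $t$-independent, the evolution equation in \eqref{Evo-1} gives $(\varphi_iq^i)_t=\dot\varphi_iq^i=\big(Q^{jk}\varphi_{jki}+Q^k\varphi_{ki}\big)q^i$, while $(\varphi_iq^i)_{jk}=\varphi_{ijk}q^i+\varphi_{ij}q^i_{,k}+\varphi_{ik}q^i_{,j}+\varphi_iq^i_{,jk}$. Commuting the third covariant derivatives of $\varphi$ through the Ricci identity together with $R_{ijml}=\sigma_{il}\sigma_{jm}-\sigma_{im}\sigma_{jl}$ on $\mathscr H^n(1)$, exactly as in the derivation of \eqref{evolu 3}, the $Q^{jk}\varphi_{ijk}q^i$ terms cancel, so no third-order derivative of $\varphi$ survives; equivalently, one may note that $\widetilde{\mathcal L}U=\mathcal L U+\tfrac{2}{n}\dot\varphi\,\iota^{kl}\varphi_lU_k$ and apply \eqref{evolu 3} with $\tau^l:=q^l$. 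Either way, inserting $\varphi_{ij}=\iota_{ij}-\sigma_{ij}+\varphi_i\varphi_j$, $Q^{ij}=-\tfrac1n\dot\varphi\,\iota^{ij}$ and $Q^k=-\tfrac{2\dot\varphi}{n}\big(\tfrac{n+1}{1-|D\varphi|^2}\sigma^{kl}-\iota^{kl}\big)\varphi_l$ and using the identity $\iota^{ij}\varphi_{li}=\delta^j_l-\iota^{ij}\sigma_{li}+\iota^{ij}\varphi_l\varphi_i$, the singular factor $\tfrac{1}{1-|D\varphi|^2}$ only ever multiplies bounded quantities of the form $D^2q(D\varphi,D\varphi)$, and the $\varphi_{ij}$-terms collapse — up to bounded and $\iota$-weighted terms — to the distinguished term $-\tfrac2n\dot\varphi\,\iota^{jk}q_{jk}$.

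Next I would estimate. By the $C^0$ estimate, the gradient bound \eqref{Gra-est}, the inequalities \eqref{det} (which in particular force $\tr\iota^{ij}\ge n(\det\iota_{ij})^{-1/n}\ge n\,c_4^{-1/n}>0$, so $\tr\iota^{ij}+1\le C\,\tr\iota^{ij}$) and \eqref{f24}--\eqref{f26}, every term other than the distinguished one is bounded in absolute value by $-C(n,M^n_0,\rho)\,\dot\varphi\,(\tr\iota^{ij}+1)$, which is positive because $\dot\varphi<0$. The distinguished term is controlled by convexity: \eqref{f25} yields $D^2q\le-\tfrac{\kappa_0}{2}\sigma$, hence $-\iota^{jk}q_{jk}\ge\tfrac{\kappa_0}{2}\tr\iota^{jk}$ and thus $-\tfrac2n\dot\varphi\,\iota^{jk}q_{jk}\le\tfrac{\kappa_0}{n}\dot\varphi\,\tr\iota^{jk}<0$. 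Collecting, $\widetilde{\mathcal L}(\varphi_iq^i)\le-C_1(n,M^n_0,\rho)\,\dot\varphi\,(\tr\iota^{ij}+1)$ with $C_1$ independent of $\varphi$ and $t$. Since Lemma \ref{lemma 4.5} gives $\widetilde{\mathcal L}(-Bq)=-B\widetilde{\mathcal L}q\le\tfrac{B\kappa_0}{4n}\dot\varphi\,\tr(\iota^{ij})$, a negative quantity, I would choose $B=B(n,M^n_0,\rho,\partial M^n)$ so large that $\tfrac{B\kappa_0}{4n}\ge C_1$, which gives $\widetilde{\mathcal L}P=\widetilde{\mathcal L}(\varphi_iq^i)-B\widetilde{\mathcal L}q\le0$ on $M_n^\delta$.

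The hard part is the middle step: the long covariant-derivative computation must be organized so that the third-order terms in $\varphi$ visibly cancel, the singular $\tfrac{1}{1-|D\varphi|^2}$-terms drop out through the special first-order part of $\widetilde{\mathcal L}$ (this is precisely what that term is built for), and one is left only with the convexity term $-\tfrac2n\dot\varphi\,\iota^{jk}q_{jk}$ and with terms that become harmless once the $C^0$, gradient and determinant estimates are invoked; after that, the choice of $B$ via Lemma \ref{lemma 4.5} is immediate.
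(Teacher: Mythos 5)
Your proposal is correct and follows essentially the same route as the paper: compute $\widetilde{\mathcal{L}}(\varphi_i q^i)$ via the \eqref{evolu 3}-type calculation with $\tau^l=q^l$ (so the third-order terms cancel and the extra first-order part of $\widetilde{\mathcal{L}}$ removes the $\frac{n+1}{1-|D\varphi|^2}$-contribution of $Q^k$), bound the remaining terms by $-C\dot{\varphi}(\tr\iota^{ij}+1)$ using \eqref{Gra-est} and \eqref{f24}--\eqref{f26}, then invoke Lemma \ref{lemma 4.5} together with the lower bound on $\tr\iota^{ij}$ from \eqref{det} to choose $B$ large. The only cosmetic difference is that you single out the favourable term $-\tfrac{2}{n}\dot{\varphi}\iota^{jk}q_{jk}$ (which you then simply drop as non-positive), whereas the paper bounds it along with the rest; this does not change the argument.
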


\begin{proof}
The calculation of $\widetilde{\mathcal{L}}P(x,t)$ is similar to
that of \eqref{evolu 3}. Differentiating this function $P(x,t)$
w.r.t. $x$ twice yields
\begin{equation*}
P_{i} = \varphi_{li}q^{l} + \varphi_{l}q^{l}_{,i} - Bq_{i}
\end{equation*}
and
\begin{equation*}
P_{ij} = \varphi_{lij}q^{l} + \varphi_{li}q^{l}_{,j} +
\varphi_{lj}q^{l}_{,i} + \varphi^{l}q_{lij} - Bq_{ij}.
\end{equation*}
Differentiating $P(x,t)$ w.r.t. $t$, we have
\begin{equation*}
\begin{aligned}
P_{t} &= \varphi_{lt}q^{l}\\
&=
\left( Q^{ij}\varphi_{ijl} + Q^{k}\varphi_{kl} \right) q^{l}\\
&= \left( Q^{ij}\varphi_{lij} + Q^{ij}\varphi_{l}\sigma_{ij}
 - Q^{ij}\varphi_{j}\sigma_{il} + Q^{k}\varphi_{kl} \right)q^{l}.
\end{aligned}
\end{equation*}
Therefore, we have
\begin{equation*}
\begin{aligned}
\widetilde{\mathcal{L}}P(x,t) &= P_{t} - Q^{ij}P_{ij} +
\frac{2(n+1)}{n}\frac{\dot{\varphi}}{1-|D\varphi|^{2}}
\varphi^{k}P_{k}\\
&=
- 2Q^{ij}\varphi_{li}q^{l}_{,j} + Q^{ij}(\sigma_{ij}\varphi_{l}q^{l} - \sigma_{il}\varphi_{j}q^{l}) - Q^{ij}\varphi^{l}q_{lij}\\
&\quad +
\frac{2(n+1)}{n}\frac{\dot{\varphi}}{1-|D\varphi|^{2}}\varphi^{k}\varphi^{l}q_{lk}
+ \frac{2\dot{\varphi}}{n}\iota^{kl}\varphi_{l}\varphi_{km}q^{m} -
  B\widetilde{\mathcal{L}}q(x).
\end{aligned}
\end{equation*}
Since
\begin{equation*}
\iota^{kl}\varphi_{km} = \delta^{l}_{m} - \iota^{kl}\sigma_{km} +
\iota^{kl}\varphi_{k}\varphi_{m},
\end{equation*}
by using \eqref{f24}, \eqref{f25}, \eqref{f26} and the
$C^{1}$-estimate \eqref{Gra-est}, we get
\begin{equation*}
\widetilde{\mathcal{L}}P(x,t) \leq -C(n, M_{0}^{n}, \rho,
\kappa_{0})(1+\vartheta)\dot{\varphi}\tr\iota^{ij}
 - C(n, M_{0}^{n}, \rho, \kappa_{0})(1+\vartheta)\dot{\varphi} - B\widetilde{\mathcal{L}}q(x).
\end{equation*}
Using Lemma \ref{lemma 4.5}, we have
\begin{equation*}
\widetilde{\mathcal{L}}P(x,t) \leq -C(n, M_{0}^{n}, \rho,
\kappa_{0})\dot{\varphi}\left( (1 + \vartheta - B)\tr\iota^{ij} + (1
+ \vartheta) \right).
\end{equation*}
Recalling \eqref{det}, it follows that
\begin{equation*}
\left(\frac{\tr\iota^{ij}}{n}\right)^{n} \geq \det(\iota^{ij}) =
\left(\det(\iota_{ij})\right)^{-1} \geq \frac{1}{c_{4}} > 0.
\end{equation*}
Since $\dot{\varphi} < 0$, choosing $B \geq
\frac{c_{4}^{\frac{1}{n}}}{n}(1 + \vartheta) + 1 + \vartheta$, we
get
\begin{equation*}
\widetilde{\mathcal{L}}P(x,t) \leq 0,
\end{equation*}
which completes the proof.
\end{proof}

We can obtain:

\begin{proposition}\label{proposition 4.7}
Let $\varphi$ be a solution of the flow \eqref{Evo-1} in
$M^{n}\times[0,T]$ for some fixed $T<T^{*}$. Then $\varphi_{\mu\mu}$
is uniformly bounded from above, i.e., there exists $C=C(n,
M_{0}^{n})$ such that
\begin{equation*}
\varphi_{\mu\mu} \leq C(n, M_{0}^{n}) \qquad
\forall~(x,t)\in\partial M^{n}\times[0,T],
\end{equation*}
where $\varphi_{\mu\mu} := \varphi_{ij}\mu^{i}\mu^{j}.$
\end{proposition}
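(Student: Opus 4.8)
The plan is to combine the differential inequality $\widetilde{\mathcal{L}}P\le 0$ from Lemma \ref{lemma 4.6} with a maximum-principle argument for the barrier $P(x,t)=\varphi_i q^i-Bq(x)$ on the tubular neighbourhood $M_{n}^{\delta}\times[0,T]$, and then to extract $\varphi_{\mu\mu}$ from the inward derivative of $P$ along $\partial M^{n}$.

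First I would examine $P$ on the parabolic boundary of $M_{n}^{\delta}\times[0,T]$, which is $\left(M_{n}^{\delta}\times\{0\}\right)\cup\left((\partial M^{n}\cup\{d=\delta\})\times[0,T]\right)$. On $\partial M^{n}$ one has $q=0$ and, by \eqref{f21-first}, $q_{i}=d_{i}=-\mu_{i}$, so $\varphi_{i}q^{i}=-\nabla_{\mu}\varphi=0$ by the Neumann condition in \eqref{Evo-1}; hence $P\equiv 0$ on $\partial M^{n}\times[0,T]$. On $\{d=\delta\}$ we have $q=\delta(1-\vartheta\delta)\ge \frac{3}{4}\delta$ while $|\varphi_{i}q^{i}|\le|D\varphi|\,|Dq|\le\rho$ by \eqref{Gra-est}, so $P\le\rho-\frac{3}{4}B\delta$. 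On the initial slice, $\varphi_{i}(x,0)q^{i}(x)=\langle D\varphi(\cdot,0),Dq\rangle$ vanishes on $\partial M^{n}$ and is Lipschitz in $x$, hence is bounded by $C(M_{0}^{n})\,d(x)\le 2C(M_{0}^{n})\,q(x)$ using $q\ge\frac{1}{2}d$ from \eqref{f24}; thus $P(\cdot,0)\le\left(2C(M_{0}^{n})-B\right)q\le 0$. Enlarging $B$ beyond all of the thresholds above and beyond the one required in Lemma \ref{lemma 4.6}, I obtain $P\le 0$ on the whole parabolic boundary.

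Next, because $\dot{\varphi}<0$ forces $Q^{ij}=-\frac{1}{n}\dot{\varphi}\,\iota^{ij}$ to be positive definite, the operator $\widetilde{\mathcal{L}}$ is uniformly parabolic with bounded lower-order coefficients (the latter by \eqref{Gra-est}), so $\widetilde{\mathcal{L}}P\le 0$ makes $P$ a subsolution; the parabolic maximum principle then yields $P\le 0$ on all of $M_{n}^{\delta}\times[0,T]$. Since $P\equiv 0$ on $\partial M^{n}\times[0,T]$ and $-\mu=Dd/|Dd|$ points into $M_{n}^{\delta}$, for each $x_{0}\in\partial M^{n}$ the one-sided derivative of $P$ in the direction $-\mu$ is nonpositive, i.e. $\nabla_{\mu}P(x_{0},t)\ge 0$. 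Finally I would differentiate $P$: from $P_{i}=\varphi_{li}q^{l}+\varphi_{l}q^{l}_{,i}-Bq_{i}$ together with $q^{l}=-\mu^{l}$ and $\langle Dq,\mu\rangle=-1$ on $\partial M^{n}$, one gets $\nabla_{\mu}P=-\varphi_{ij}\mu^{i}\mu^{j}+\varphi_{l}q^{l}_{,i}\mu^{i}+B$, and hence, using \eqref{Gra-est} and \eqref{f24}--\eqref{f26},
$$\varphi_{\mu\mu}=-\nabla_{\mu}P+\varphi_{l}q^{l}_{,i}\mu^{i}+B\le C(n,M_{0}^{n}).$$

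Since the substantive computation $\widetilde{\mathcal{L}}P\le 0$ is already furnished by Lemma \ref{lemma 4.6}, the only delicate points remaining are bookkeeping ones: checking that $P$ is a sub- (not super-) solution so that the maximum --- rather than the minimum --- sits on the parabolic boundary, confirming that $-\mu$ is indeed the interior direction, and verifying $P\le 0$ on the initial slice, which relies on the Neumann compatibility of $u_{0}$ and the elementary comparison $d\le 2q$ in $M_{n}^{\delta}$.
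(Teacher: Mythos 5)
Your proof is correct and follows essentially the same route as the paper: the barrier $P=\varphi_{i}q^{i}-Bq$, the differential inequality of Lemma \ref{lemma 4.6}, the parabolic maximum principle on $M_{n}^{\delta}\times[0,T]$ giving $P\le 0$, and the Hopf-type boundary inequality $P_{i}\mu^{i}\ge 0$ at the zero maximum on $\partial M^{n}$ to extract $\varphi_{\mu\mu}\le C(n,M_{0}^{n})$. The only differences are cosmetic: you explicitly verify $P\le 0$ on the initial slice (which the paper leaves implicit), and the comparison $d\le 2q$ you use there really comes from $q=d(1-\vartheta d)$ with $\vartheta\delta\le\frac{1}{4}$ rather than from \eqref{f24}.
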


\begin{proof}
From the boundary condition in \eqref{Evo-1}, it is easy to know
\begin{equation*}
P=0 \qquad \mathrm{on}~~\partial M^{n}\times[0,T].
\end{equation*}
On $(\partial M_{n}^{\delta} \backslash \partial M^{n}
)\times[0,T]$, we have
\begin{equation*}
P \leq C(\rho) - \frac{3}{4}B\varepsilon \leq 0
\end{equation*}
provided $B \geq \frac{4}{3}\frac{C(\rho)}{\varepsilon}$, where
$\varepsilon\in(0, \delta]$. Applying the maximum principle, it
follows that
\begin{equation*}
P \leq 0    \qquad\qquad \mathrm{in}~~M_{n}^{\delta}\times[0,T].
\end{equation*}
We have further that $P(x_{0}, t) = 0$ is a maximum, which implies
\begin{equation*}
P_{i}\mu^{i} \geq 0
\end{equation*}
for the future-directed spacelike unit normal vector $\mu$.
Therefore,
\begin{equation*}
-\varphi_{\mu\mu}|Dq| + \varphi_{j}q^{j}_{,i}\mu^{i} - Bq_{i}\mu^{i}
\geq 0.
\end{equation*}
Finally, using the $C^{1}$-estimate \eqref{Gra-est}, we have
\begin{equation*}
\varphi_{\mu\mu} \leq C(n, M_{0}^{n}),
\end{equation*}
which completes the proof.
\end{proof}

\subsection{Remaining $C^{2}$ boundary estimates.}

We have obtained interior estimates under the assumption that the
maximum of $S$ is attained in the interior of $M^{n}$. Now, we have
to consider the possibility that the maximum of $S$ is not in the
interior of $M^{n}$. Since the double normal boundary estimates have
been done in the previous subsection, we shall try to get remaining
$C^{2}$ boundary estimates by following a similar discussion to that
done by Lions-Trudinger-Urbas in \cite{PL}.

\begin{proposition}\label{proposition 4.8}
Let $\varphi$ be a solution of the flow \eqref{Evo-1} in
$M^{n}\times[0,T]$ for some fixed $T < T^{\ast}$, and assume that
$S$ attains its maximum on $\partial
M^{n}\times\mathscr{H}^{n}(1)\times[0,T]$. Then, there exists $C =
C(n, M_{0}^{n})$ such that
\begin{equation*}
\varphi_{ij}(x,t)\xi^{i}\xi^{j} \leq C(n, M_{0}^{n}) \qquad
\forall~(x, \xi, t)\in \partial
M^{n}\times\mathscr{H}^{n}(1)\times[0,T].
\end{equation*}
\end{proposition}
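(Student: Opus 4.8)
The plan is to carry out the Lions--Trudinger--Urbas boundary argument (cf.\ \cite{PL}), adapted to the scalar flow \eqref{Evo-1} over the hyperbolic base $\mathscr{H}^{n}(1)$, using the double normal bound of Proposition \ref{proposition 4.7} and the convexity of $\partial M^{n}$ as the source of the good sign. Assume the (global) maximum of $S$ over $\overline{M^{n}}\times\mathscr{H}^{n}(1)\times[0,T]$ is attained at $(x_{0},\xi_{0},t_{0})$ with $x_{0}\in\partial M^{n}$ --- the case left open after Proposition \ref{C2 interior estimate}. As in the proof of that proposition, fix normal coordinates at $x_{0}$ with $\sigma_{ij}(x_{0})=\delta_{ij}$, take $\mu(x_{0})=e_{n}$, and choose a vector field $\xi_{1}(x)$ with $\xi_{1}(x_{0})=\xi_{0}$ whose covariant derivatives (up to second order) and first time derivative agree at $(x_{0},t_{0})$ with those of the corresponding diagonal entry, so that
$$W(x,t):=\log\!\Big(\tfrac{[\iota_{ij}(x,t)+\eta_{ij}(x,t)]\xi_{1}^{i}\xi_{1}^{j}}{\sigma_{ij}\xi_{1}^{i}\xi_{1}^{j}}+C\Big)+\tfrac12\lambda|D\varphi|^{2}$$
attains its maximum over $\overline{M^{n}}\times[0,T]$ at the boundary point $(x_{0},t_{0})$.

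First I would use the $\eta$-term to locate $\xi_{0}$. Differentiating the Neumann condition $\mu^{i}\varphi_{i}=0$ once tangentially gives $\varphi_{\mu\tau}=-\mu^{i}_{,\tau}\varphi_{i}$ for $\tau\in T_{x_{0}}\partial M^{n}$, which is exactly $-\eta_{\mu\tau}(x_{0})$; since also $\varphi_{\mu\tau}=\iota_{\mu\tau}(x_{0})$, this means $[\iota_{ij}+\eta_{ij}]\mu^{i}\tau^{j}=0$ on $\partial M^{n}$, i.e.\ the symmetric matrix $\iota+\eta$ is block-diagonal at $(x_{0},t_{0})$ with respect to $\mathbb{R}\mu\oplus T_{x_{0}}\partial M^{n}$. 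Hence its maximal direction $\xi_{0}$ is either $\mu$ --- in which case $[\iota+\eta]_{\xi_{0}\xi_{0}}=\iota_{\mu\mu}+\eta_{\mu\mu}=\varphi_{\mu\mu}+1$ is bounded by Proposition \ref{proposition 4.7} and the gradient estimate \eqref{Gra-est} --- or else tangential to $\partial M^{n}$, the case I now treat. Moreover, as $\xi_{0}$ is the maximal direction it is an eigenvector of $\iota+\eta$, so (using that the tangential block of $\eta$ vanishes at $x_{0}$) $\iota_{k\xi_{0}}(x_{0},t_{0})=0$ for every tangential $k\perp\xi_{0}$, whence $\varphi_{k\xi_{0}}=-\sigma_{k\xi_{0}}+\varphi_{k}\varphi_{\xi_{0}}$ is bounded for such $k$; together with $\varphi_{\mu\xi_{0}}$ and $\varphi_{\mu\mu}$ (both bounded, the latter by Proposition \ref{proposition 4.7}), this means every second derivative $\varphi_{k\xi_{0}}$ with $k\neq\xi_{0}$ is bounded.

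Next, $\mu$ being the outward spacelike unit normal and $W$ having a boundary maximum at $(x_{0},t_{0})$ force $\nabla_{\mu}W(x_{0},t_{0})\geq0$; since $\nabla_{\mu}\big(\tfrac12|D\varphi|^{2}\big)=-\sum_{i,j=1}^{n-1}h^{\partial M^{n}}_{ij}\nabla_{e_{i}}\varphi\,\nabla_{e_{j}}\varphi\leq0$ on $\partial M^{n}$ by the convexity fact already used in Lemma \ref{Gradient}, this yields $\nabla_{\mu}\big([\iota_{ij}+\eta_{ij}]\xi_{0}^{i}\xi_{0}^{j}\big)(x_{0},t_{0})\geq0$. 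I would then compute this normal derivative. Writing $\iota_{ij}=\varphi_{ij}+\sigma_{ij}-\varphi_{i}\varphi_{j}$ and using the Ricci identity with $R_{ijkl}=\sigma_{il}\sigma_{jk}-\sigma_{ik}\sigma_{jl}$ and $\nabla R=0$ on $\mathscr{H}^{n}(1)$, one commutes $\mu^{k}\varphi_{ijk}\xi_{0}^{i}\xi_{0}^{j}$ into $\xi_{0}^{i}\xi_{0}^{j}\nabla_{i}\nabla_{j}(\mu^{k}\varphi_{k})$ plus lower-order and curvature-commutator terms, the latter collapsing --- by virtue of the $\mu=e_{n}$ contraction --- to already-controlled mixed and double-normal second derivatives. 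Since $\mu^{k}\varphi_{k}\equiv0$ on $\partial M^{n}$, the Gauss formula for $\partial M^{n}\subset M^{n}$ gives $\xi_{0}^{i}\xi_{0}^{j}\nabla_{i}\nabla_{j}(\mu^{k}\varphi_{k})=h^{\partial M^{n}}(\xi_{0},\xi_{0})\,\varphi_{\mu\mu}$, which is bounded; the cross term $-2\xi_{0}^{i}\xi_{0}^{j}(\nabla_{i}\mu^{k})\varphi_{kj}$, with $\nabla_{i}\mu$ tangential and equal to the (positive, by convexity) Weingarten operator of $\partial M^{n}$, contributes $-2h^{\partial M^{n}}(\xi_{0},\xi_{0})\varphi_{\xi_{0}\xi_{0}}$ up to bounded off-diagonal terms; and $\nabla_{\mu}\eta_{\xi_{0}\xi_{0}}(x_{0},t_{0})=0$. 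Putting this together, $\nabla_{\mu}\big([\iota+\eta]_{\xi_{0}\xi_{0}}\big)\leq -2\kappa_{0}\varphi_{\xi_{0}\xi_{0}}+C(n,M_{0}^{n})$ (assuming, as we may, that $\varphi_{\xi_{0}\xi_{0}}>0$ is large). Combined with $\nabla_{\mu}(\cdots)\geq0$ this gives $\varphi_{\xi_{0}\xi_{0}}(x_{0},t_{0})\leq C(n,M_{0}^{n})$, hence $[\iota+\eta]_{\xi_{0}\xi_{0}}(x_{0},t_{0})\leq C(n,M_{0}^{n})$; since $(x_{0},\xi_{0},t_{0})$ realizes the maximum of $S$ and $\eta,|D\varphi|^{2}$ are bounded, $\varphi_{ij}\xi^{i}\xi^{j}\leq C(n,M_{0}^{n})$ on all of $\partial M^{n}\times\mathscr{H}^{n}(1)\times[0,T]$.

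The main obstacle is the normal-derivative computation in the third step: organizing the twice-differentiated Neumann condition together with the third-order commutators on $\mathscr{H}^{n}(1)$, and checking that every term not absorbed by the $C^{1}$-estimate, by Proposition \ref{proposition 4.7}, or by the vanishing of $\nabla_{\mu}\eta_{\xi_{0}\xi_{0}}$ appears with the favorable sign coming from the convexity of $\partial M^{n}$. The bookkeeping of the off-diagonal second derivatives --- which are controlled precisely because $\xi_{0}$ is an eigendirection of $\iota+\eta$ and because $\eta$ annihilates the mixed normal--tangential entries on $\partial M^{n}$ --- is the other point that requires care.
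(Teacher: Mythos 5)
Your proposal is correct and is essentially the paper's own Lions--Trudinger--Urbas argument: the block-diagonality of $\iota+\eta$ at boundary points that you extract from the once-differentiated Neumann condition is exactly the paper's \textbf{Case 2} computation with $\xi_{0}=\ell\gamma+\sqrt{1-\ell^{2}}\,\mu$, and your tangential case (Hopf inequality $\nabla_{\mu}W\geq 0$, the convexity sign of $\nabla_{\mu}\bigl(\tfrac12|D\varphi|^{2}\bigr)$, the twice-differentiated Neumann condition phrased via the Gauss formula for $\mu^{k}\varphi_{k}$, the eigendirection property of $\xi_{0}$, and the Ricci identity) is a reordering of the paper's \textbf{Case 1}. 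The only slip is the assertion $\nabla_{\mu}\eta_{\xi_{0}\xi_{0}}(x_{0},t_{0})=0$, which need not hold for a general extension of $\mu$; however this term is bounded by the $C^{1}$-estimate \eqref{Gra-est}, which is all your inequality requires.
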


\begin{proof}
Assume that $S$ attains its maximum at a point $(x_{0}, \xi_{0},
t_{0})\in \partial M^{n}\times\mathscr{H}^{n}(1)\times[0,T]$. By
proposition \ref{proposition 4.7}, we know
\begin{equation*}
\varphi_{\mu\mu} \leq C(n, M_{0}^{n}) \qquad \forall~(x,t)\in
\partial M^{n}\times[0, T].
\end{equation*}
Thus, the remaining case is $\xi_{0} \neq \mu$. Without loss of
generality, assume that $S$ attains its maximum at a point $(x_{0},
\xi_{0}, t_{0})\in \partial
M^{n}\times\mathscr{H}^{n}(1)\times[0,T]$ with $\xi_{0} \neq \mu$.
Let $x_{0}\in\partial M^{n}$ be fixed. We choose a boundary
coordinate chart containing $x_{0}$ so that $\partial M^{n}$ is
represented locally as a graph of function $f$ over its tangent
plane at $x_{0} = (\widehat{x}_{0}, x_{0}^{n})$, and then locally
$M^{n} = \{(\widehat{x}, x^{n}) | x^{n} < f(\widehat{x})\}$,
$Df(\widehat{x}_{0}) = 0$. We divide the rest discussion into two
cases:

\textbf{Case 1}. Assume that $\xi_{0}$ is tangential. If $\xi_{0}$
is tangential to $\partial M^{n}$, differentiating the boundary
condition
$$\mu^{i}\varphi_{i} = 0$$
w.r.t. tangential directions $\xi_{0}$ yields
$$\mu^{i}_{,\xi_{0}}\varphi_{i} + \mu^{i}\varphi_{i\xi_{0}} + \mu^{i}\varphi_{in}f_{\xi_{0}} = 0,$$
 and then at $x_{0}$, it follows that
$$\mu^{i}_{,\xi_{0}}\varphi_{i} + \mu^{i}\varphi_{i\xi_{0}} = 0$$
in view of $Df(\widehat{x}_{0}) = 0$. This, together with the
$C^{1}$-estimate \eqref{Gra-est},  implies
$$|\mu^{i}\varphi_{i\xi_{0}}| \leq C(n, M_{0}^{n}).$$
Differentiating the boundary condition again, we can get at $x_{0}$,
$$\mu^{i}_{,\xi_{0}\xi_{0}}\varphi_{i} + 2\mu^{i}_{,\xi_{0}}\varphi_{i\xi_{0}} + \mu^{i}\varphi_{i\xi_{0}\xi_{0}} + \mu^{i}\varphi_{in}f_{\xi_{0}\xi_{0}} = 0$$
in view of $Df(\widehat{x}_{0}) = 0$.

In the rest part of the proof, we make the following agreement:
 \begin{itemize}
 \item We put vectors as indices to indicate products as
$$\varphi_{\mu\xi\xi} := \mu^{i}\varphi_{ijk}\xi^{j}\xi^{k}$$
but not covariant derivatives in the corresponding direction
$$\varphi_{i\xi\xi} \neq (\xi^{j}\varphi_{ij})_{,\xi} = \xi^{k}\xi^{j}_{,k}\varphi_{ij} + \xi^{k}\xi^{j}\varphi_{ijk}.$$
Analogously, we have
$$\varphi_{\xi\xi} := \varphi_{ij}\xi^{i}\xi^{j}, \quad \iota_{\xi\xi} :=\iota_{ij}\xi^{i}\xi^{j}, \quad \iota_{\mu\mu} := \iota_{ij}\mu^{i}\mu^{j}, \quad \sigma_{\xi\xi} := \sigma_{ij}\xi^{i}\xi^{j}, \quad \cdots$$
 \end{itemize}
At $x_{0}$, $C^{1}$-estimate \eqref{Gra-est} and double normal
estimates provide
$$\mu^{i}_{,\xi_{0}\xi_{0}}\varphi_{i} \geq -C(\partial M^{n}, \rho)$$
and
$$\mu^{i}\varphi_{in}f_{\xi_{0}\xi_{0}} \geq -C(n, M_{0}^{n}, \partial M^{n})$$
in view of $D^{2}f(\widehat{x}_{0}) < 0$. So, we have
\begin{equation}\label{f28}
\begin{aligned}
\varphi_{\mu\xi_{0}\xi_{0}} &\leq
-2\mu^{i}_{,\xi_{0}}\varphi_{i\xi_{0}} + C(n, M_{0}^{n}, \rho, \partial M^{n})\\
&\leq
-2\mu^{i}_{,\xi_{0}}(1 + \varphi_{i\xi_{0}} - \varphi_{i}\varphi_{\xi_{0}}) + C(n, M_{0}^{n}, \partial M^{n}, \rho)\\
&= -2\mu^{i}_{,\xi_{0}}\iota_{i\xi_{0}} + C(n, M_{0}^{n}, \partial
M^{n}, \rho).
\end{aligned}
\end{equation}
As already noted, $\xi_{0}$ is an eigenvector of $\iota_{ij}(x_{0},
t_{0}) + \eta_{ij}(x_{0}, t_{0})$ with an eigenvalue $\lambda_{0}$,
since it corresponds to a maximal direction. Therefore, it follows
that
\begin{equation*}
\begin{aligned}
-\mu^{i}_{,\xi_{0}}\iota_{i\xi_{0}}(x_{0}, t_{0}) &=
-\xi^{j}_{0}\mu^{i}_{,j}(\iota_{ik} + \eta_{ik})\xi^{k}_{0} + \xi^{j}_{0}\mu^{i}_{,j}\eta_{ik}\xi^{k}_{0}\\
&= -\lambda_{0}\xi^{j}_{0}\mu^{i}_{,j}\sigma_{ik}\xi^{k}_{0} +
\xi^{j}_{0}\mu^{i}_{,j}\eta_{ik}\xi^{k}_{0},
\end{aligned}
\end{equation*}
where we may assume that $\lambda_{0}$ is nonnegative, because
otherwise $\iota_{ik} + \eta_{ik}$ would be negative definite and
the needed estimate would follow immediately. Moreover, the strict
convexity of $\partial M^{n}$ implies the existence of a constant
$C>0$ such that
$$\xi^{j}\mu^{i}_{,j}\sigma_{ik}\xi^{k} \geq C(\partial M^{n})\xi^{i}\xi^{k}\sigma_{ik}$$
for all tangential vectors $\xi$. Thus, the inequality \eqref{f28}
degenerates into
\begin{equation*}
\begin{aligned}
\varphi_{\mu\xi_{0}\xi_{0}} &\leq -2\lambda_{0}\xi^{j}_{0}\mu^{i}_{,j}\sigma_{ik}\xi^{k}_{0} + 2\xi^{j}_{0}\mu^{i}_{,j}\eta_{ik}\xi^{k}_{0} + C(n, M_{0}^{n}, \partial M^{n}, \rho)\\
&=
-2\xi^{j}_{0}(\iota_{ik} + \eta_{ik})\mu^{i}_{,j}\xi^{k}_{0} + 2\xi^{j}_{0}\mu^{i}_{,j}\eta_{ik}\xi^{k}_{0} + C(n, M_{0}^{n}, \partial M^{n}, \rho)\\
&\leq -2C(\partial M^{n})\iota_{\xi_{0}\xi_{0}} + C(n, M_{0}^{n},
\partial M^{n}, \rho).
\end{aligned}
\end{equation*}
On the other hand, since $S$ achieves its maximal value at $x_{0}$,
it gives $0 \leq S_{\mu}$,
$$0 \leq \frac{\iota_{\xi_{0}\xi_{0},\mu} + \eta_{\xi_{0}\xi_{0}, \mu}}{V_{\xi_{0}}} + \lambda\varphi^{i}\varphi_{i\mu},$$
where $V_{\xi_{0}} := \iota_{\xi_{0}\xi_{0}} + \eta_{\xi_{0}\xi_{0}}
+ C$. Since $\partial M^{n}$ is strictly convex, we have
$$\lambda\varphi^{i}\varphi_{i\mu} = -\lambda\varphi^{i}\mu^{j}_{,i}\varphi_{j} \leq -C(\partial M^{n})\lambda|D\varphi|^{2} \leq 0,$$
which implies
$$0 \leq \varphi_{\xi_{0}\xi_{0}\mu} + C(n, M_{0}^{n}, \partial M^{n}).$$
Together with
$$\varphi_{\xi_{0}\xi_{0}\mu} = \varphi_{\mu\xi_{0}\xi_{0}} + R_{i\xi_{0}\xi_{0}\mu}\varphi^{i},$$
 we can get
$$S(x_{0}, \xi_{0}, t_{0}) \leq C(n, M_{0}^{n}).$$
So, the desired estimate
$$\varphi_{ij}(x,t)\xi^{i}\xi^{j} \leq C(n, M_{0}^{n}), \qquad \forall~(x, \xi, t)\in \overline{M^{n}}\times\mathscr{H}^{n}(1)\times[0,T]$$
follows in \textbf{Case 1}.

\textbf{Case 2}. Assume that $\xi_{0}$ is non-tangential. If
$\xi_{0}$ is neither tangential nor normal, we use the tricky choice
introduced in \cite{PL}. We find $0<\ell<1$ and a tangential
direction $\gamma$ such that
$$\xi_{0} = \ell\gamma + \sqrt{1-\ell^{2}}\mu.$$
Thus,
\begin{equation*}
\begin{aligned}
\varphi_{\xi_{0}\xi_{0}}&=
\varphi_{ij}\xi^{i}_{0}\xi^{j}_{0} = \varphi_{ij}\left(\ell\gamma^{i} + \sqrt{1-\ell^{2}} \mu^{i}\right)\left(\ell\gamma^{j} + \sqrt{1-\ell^{2}}\mu^{j} \right)\\
&=
\varphi_{ij}\left(\ell^{2}\gamma^{i}\gamma^{j} + 2p\sqrt{1-\ell^{2}}\gamma^{i}\mu^{j} + (1-\ell^{2})\mu^{i}\mu^{j} \right)\\
&=\ell^{2}\varphi_{\gamma\gamma} +
2\ell\sqrt{1-\ell^{2}}\varphi_{\gamma\mu} +
(1-\ell^{2})\varphi_{\mu\mu}.
\end{aligned}
\end{equation*}
Differentiating the boundary condition at a boundary point, we have
$$\mu^{i}_{,j}\varphi_{i} = -\mu^{i}\varphi_{ij}.$$
Therefore, at the boundary point, one has
$$\eta(x, \xi_{0}, \xi_{0}, t) = 2\mu^{i}_{,j}\varphi_{i}\langle\xi_{0},
\mu\rangle = -2\ell\sqrt{1-\ell^{2}}\varphi_{\gamma\mu},$$ and
consequently,
$$\varphi_{\xi_{0}\xi_{0}} = \ell^{2}\varphi_{\gamma\gamma} + (1-\ell^{2})\varphi_{\mu\mu} - \eta_{\xi_{0}\xi_{0}}.$$
Thus, in view of the NBC in \eqref{Evo-1}, one has
\begin{equation*}
\begin{aligned}
\iota_{\xi_{0}\xi_{0}} + \eta_{\xi_{0}\xi_{0}} &= 1-\ell^{2}\varphi_{\gamma}\varphi_{\gamma} + \left(\ell^{2}\varphi_{\gamma\gamma} + (1-\ell^{2})\varphi_{\mu\mu}\right)\\
&= \ell^{2}\iota_{\gamma\gamma} + (1-\ell^{2})\iota_{\mu\mu}.
\end{aligned}
\end{equation*}
Therefore, the identity $\eta(x, \gamma, \gamma, t) = \eta(x, \mu,
\mu, t) = 0$ for all $(x,t)\in\overline{M^{n}}\times[0,T]$ provides
\begin{equation*}
\begin{aligned}
\iota_{\xi_{0}\xi_{0}} + \eta_{\xi_{0}\xi_{0}} &=
\ell^{2}(\iota_{\gamma\gamma }+ \eta_{\gamma\gamma}) + (1-\ell^{2})(\iota_{\mu\mu} + \eta_{\mu\mu})\\
&\leq \ell^{2}(\iota_{\xi_{0}\xi_{0}} + \eta_{\xi_{0}\xi_{0}}) +
(1-\ell^{2})(\iota_{\mu\mu} + \eta_{\mu\mu}).
\end{aligned}
\end{equation*}
It follows immediately
$$\iota_{\xi_{0}\xi_{0}} + \eta_{\xi_{0}\xi_{0}} \leq \iota_{\mu\mu} + \eta_{\mu\mu}.$$
 Then we have
$$\iota_{\xi_{0}\xi_{0}}(x_{0}, t_{0}) \leq \iota_{\mu\mu}(x_{0}, t_{0}) - \eta_{\xi_{0}\xi_{0}} \leq \iota_{\mu\mu}(x_{0}, t_{0}) + C(M_{0}^{n}).$$
This implies
$$\iota_{\xi_{0}\xi_{0}} \leq C(n, M_{0}^{n})$$
in view of Proposition \ref{proposition 4.7}, and the desired
estimate
$$\varphi_{ij}\xi^{i}\xi^{j} \leq C(n, M_{0}^{n})   \qquad \forall~(x, \xi, t)\in\partial M^{n}\times\mathscr{H}^{n}(1)\times[0,T]$$
follows in \textbf{Case 2}. Our proof is finished.
\end{proof}

\begin{theorem}\label{theorem 4.9}
Under the hypothesis of Theorem \ref{main1.1}, we conclude
$$T^{*} = + \infty.$$
\end{theorem}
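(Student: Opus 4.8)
\textbf{Proof of Theorem \ref{theorem 4.9} (plan).}
The plan is to run the usual bootstrap-and-continuation argument, feeding the a priori estimates already established into the regularity theory for fully nonlinear uniformly parabolic equations with oblique boundary data. First I would collect all the uniform bounds obtained so far: the $C^{0}$-bound (Lemma \ref{$C^{0}$ estimate}), the gradient bound $|D\varphi|\le\rho<1$ (Lemma \ref{Gradient}), the $\dot\varphi$-bound (Lemma \ref{lemma3.2}), and the $C^{2}$-bound $|D^{2}\varphi|\le C(n,M^{n}_{0})$ (Theorem \ref{$C^{2}$ estimate}). Combined with the two-sided bound \eqref{det} on $\det(\iota_{ij})$ and with $\iota_{ij}>0$, these give uniform two-sided bounds on the eigenvalues of $\iota_{ij}$ on $M^{n}\times[0,T]$ for every $T<T^{\ast}$, with constants independent of $T$. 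In particular $Q(D\varphi,D^{2}\varphi)$ is uniformly parabolic on $[0,T^{\ast})$, its linearization $Q^{ij}=-\frac1n\dot\varphi\,\iota^{ij}$ has ellipticity constants controlled from above and below, $Q$ is concave in the Hessian variable (since $\det^{1/n}$ is concave on positive definite matrices), and the condition $\nabla_{\mu}\varphi=0$ is a genuinely oblique boundary condition because $\mu(x)\notin T_{x}\partial M^{n}$.

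With these structural facts in hand I would invoke the Krylov--Safonov interior estimate together with its boundary counterpart for oblique problems to obtain a uniform Hölder bound on $D^{2}\varphi$ and $\dot\varphi$, i.e. $\|\varphi\|_{C^{2+\beta,1+\beta/2}(M^{n}\times[0,T])}\le C$ for some $\beta\in(0,1)$ and $C$ independent of $T<T^{\ast}$; here the concavity of $Q$ is exactly what yields the Evans--Krylov type gain of a Hölder derivative. Differentiating the equation and treating the resulting equations as linear uniformly parabolic equations with Hölder coefficients and oblique Hölder boundary data, the linear parabolic Schauder theory then gives, by induction on the order of differentiation, uniform $C^{k,\alpha}$ bounds on $M^{n}\times[\varepsilon,T]$ for every $k$ and every $\varepsilon>0$, and uniform $C^{2+\alpha,1+\alpha/2}(M^{n}\times[0,T])$ bounds up to the initial slice (using $u_{0}\in C^{2,\alpha}$ and the compatibility of the NBC). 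All these bounds are independent of $T<T^{\ast}$.

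Finally I would close with the continuation argument. Suppose, for contradiction, that $T^{\ast}<\infty$. By the uniform estimates above, $\varphi(\cdot,t)$ converges in $C^{2+\alpha}(M^{n})$ as $t\uparrow T^{\ast}$ to a limit $\varphi(\cdot,T^{\ast})\in C^{2+\alpha}(M^{n})$ satisfying $\nabla_{\mu}\varphi(\cdot,T^{\ast})=0$, and $\iota_{ij}(\cdot,T^{\ast})$ is still positive definite up to $\partial M^{n}$ (with the same uniform lower bound). Hence $\varphi(\cdot,T^{\ast})$ is an admissible initial datum for \eqref{Evo-1}, and the short-time existence and uniqueness lemma, applied with $M_{0}^{n}$ replaced by the graph of $u(\cdot,T^{\ast})=e^{\varphi(\cdot,T^{\ast})}$, produces a solution on $[T^{\ast},T^{\ast}+\varepsilon_{0})$ for some $\varepsilon_{0}>0$; by uniqueness this glues with the original solution to give an admissible solution of \eqref{Evo-1} on $[0,T^{\ast}+\varepsilon_{0})$, contradicting the maximality of $T^{\ast}$. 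Therefore $T^{\ast}=+\infty$. The genuinely hard input is the $C^{2}$-estimate of Theorem \ref{$C^{2}$ estimate}, already proved via the interior, double-normal and tangential boundary analysis of the preceding subsections; the only point requiring care here is verifying that the hypotheses of the fully nonlinear parabolic regularity theory hold on the manifold-with-boundary $M^{n}$ with an oblique (rather than conormal) boundary condition — concavity of $Q$ in $D^{2}\varphi$, uniform obliqueness of $\mu$, and smoothness of $\partial M^{n}$ — so that the Krylov--Safonov/Evans--Krylov machinery and the Schauder bootstrap apply with constants uniform in $T$. $\hfill\Box$
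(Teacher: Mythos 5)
Your proposal is correct and follows essentially the same route as the paper: feed the $C^{0}$, gradient, $\dot{\varphi}$, $C^{2}$ and $\det(\iota_{ij})$ bounds into uniform parabolicity of $Q$, apply the Krylov--Safonov/Evans--Krylov theory (the paper cites \cite[Chapter 14]{GM}) to get uniform $C^{2,\alpha}$ bounds, and bootstrap via Schauder estimates. The only difference is that you spell out the continuation-past-$T^{\ast}$ argument explicitly, which the paper leaves implicit in the phrase ``which implies the maximal time interval is unbounded''; this is a welcome but not substantively different elaboration.
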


\begin{proof}
Recalling that $\varphi$ satisfies the system \eqref{Evo-1},
$$\frac{\partial\varphi}{\partial t} = Q(D\varphi, D^{2}\varphi).$$
By a simple calculation, we get
$$\frac{\partial Q}{\partial\varphi_{ij}} = \frac{1}{n}(1-|D\varphi|^{2})^{\frac{n+1}{n}}\frac{\det^{\frac{1}{n}}(\sigma_{ij})}{\det^{\frac{1}{n}}(\iota_{ij})}\iota^{ij},$$
which is uniformly parabolic on finite intervals from
$C^{0}$-estimate, $C^{1}$-estimate \eqref{Gra-est} and the estimate
\eqref{det}. Then by the Krylov-Safanov estimates\footnote{~Using
Harnack inequality and Alexandrov's maximum principle, which yields
an estimate on the $L^{\infty}$-norms of solutions in terms of
$L^{n}$-norms of nonhomogeneous terms, the related H\"{o}lder
estimate can be obtained.} (see, e.g., \cite[Chapter 14]{GM}), we
have
\begin{equation*}
\Vert\varphi \Vert_{C^{2, \alpha}(\overline{M^{n}})} \leq C(n,
M_{0}^{n})
\end{equation*}
which implies the maximal time interval is unbounded, i.e.,
$T^{\ast} = +\infty$.
\end{proof}

Taking the derivatives to the evolution equation in \eqref{Evo-1},
and then using a similar argument as above (i.e., using the method
of Krylov-Safanov estimates), we can increase the regularity of the
solution of the flow from $C^{2,\alpha}$ to $C^{3,\alpha}$, and so
on, the smoothness of the solution to the flow can be obtained.

\section{Convergence of the rescaled flow} \label{se6}

Now, we define the rescaled flow by
\begin{equation*}
\widetilde{X} = X\Lambda^{-1},
\end{equation*}
where $\Lambda:=e^{-t}$. Thus,
\begin{equation*}
\widetilde{u} = u\Lambda^{-1},
\end{equation*}
\begin{equation*}
\widetilde{\varphi} = \varphi - \log\Lambda,
\end{equation*}
and the rescaled Gaussian curvature is
\begin{equation*}
\widetilde{K} = K\Lambda^{n}
\end{equation*}
Then, the rescaled scalar curvature equation takes the form
\begin{equation*}
\frac{\partial}{\partial t}\widetilde{u} = -
v\widetilde{K}^{-\frac{1}{n}} + \widetilde{u},
\end{equation*}
or equivalently, with $\widetilde{\varphi} = \log\widetilde{u}$,
\begin{equation}\label{rescale 1}
\frac{\partial}{\partial t}\widetilde{\varphi} =
-v\widetilde{u}^{-1}\widetilde{K}^{-\frac{1}{n}} + 1 =
\widetilde{Q}(D\widetilde{\varphi}, D^{2}\widetilde{\varphi}).
\end{equation}
Since the spatial derivatives of $\widetilde{\varphi}$ are equal to
those of $\varphi$, \eqref{rescale 1} is a second-order nonlinear
parabolic PDE with a uniformly parabolic and concave operator
$\widetilde{K}$. The rescaled version of the system \eqref{Evo-1}
satisfies
\begin{equation*}
\left\{
\begin{aligned}
&\frac{\partial}{\partial
t}\widetilde{\varphi}=\widetilde{Q}(D\widetilde{\varphi},
D^2\widetilde{\varphi})  \qquad &&\mathrm{in}~
M^n\times(0,T)\\
&\nabla_{\mu} \widetilde{\varphi}=0  \qquad &&\mathrm{on}~ \partial M^n\times(0,T)\\
&\widetilde{\varphi}(\cdot,0)=\widetilde{\varphi}_{0} \qquad
&&\mathrm{in}~M^n,
\end{aligned}
\right.
\end{equation*}
where
$$\widetilde{Q}(D\widetilde{\varphi}, D^{2}\widetilde{\varphi}) :=
-(1-|D\widetilde{\varphi}|^{2})^{\frac{n+1}{n}}\frac{{\det}^{\frac{1}{n}}(\sigma_{ij})}{{\det}^{\frac{1}{n}}(\iota_{ij})}
+ 1.$$
  Then, using the decay estimate \eqref{Gra-est} of
$|D\varphi|$, we can deduce a decay estimate of
$|D\widetilde{\varphi}(\cdot, t)|$ as follows:

\begin{lemma}\label{lemma 5.1}
Let $\varphi$ be a solution of \eqref{Evo-1}, then we have
\begin{equation*}\label{rescale 2}
|D\widetilde{\varphi}(x,t)| \leq
\sup\limits_{\overline{M^{n}}}|D\widetilde{\varphi}(\cdot, 0)| \leq
\rho < 1.
\end{equation*}
\end{lemma}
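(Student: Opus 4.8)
The plan is to reduce the claim directly to the gradient estimate already proved in Lemma~\ref{Gradient}. The key observation is that the rescaling only shifts $\varphi$ by a function of $t$ alone: since $\Lambda=e^{-t}$ does not depend on $x$, we have $\widetilde{\varphi}(x,t)=\varphi(x,t)-\log\Lambda(t)=\varphi(x,t)+t$, and hence $D_i\widetilde{\varphi}(x,t)=D_i\varphi(x,t)$ for every index $i$ and every $(x,t)\in M^n\times[0,T)$. Consequently $|D\widetilde{\varphi}(x,t)|^2=\sigma^{ij}\widetilde{\varphi}_i\widetilde{\varphi}_j=\sigma^{ij}\varphi_i\varphi_j=|D\varphi(x,t)|^2$ pointwise, which is exactly the remark recorded just before the statement.

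First I would invoke Lemma~\ref{Gradient}, which gives $|D\varphi(x,t)|\leq\sup_{M^n}|D\varphi(\cdot,0)|\leq\rho<1$ for all $(x,t)$; combined with the identity above this yields $|D\widetilde{\varphi}(x,t)|\leq\sup_{M^n}|D\varphi(\cdot,0)|$. Then, since $\Lambda(0)=1$, we have $\widetilde{\varphi}(\cdot,0)=\varphi(\cdot,0)=\varphi_0$, so $\sup_{\overline{M^n}}|D\widetilde{\varphi}(\cdot,0)|=\sup_{\overline{M^n}}|D\varphi_0|\leq\rho<1$ by the spacelike-graph hypothesis $|Du_0|/u_0\leq\rho<1$ imposed in Theorem~\ref{main1.1}. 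Chaining these inequalities gives precisely $|D\widetilde{\varphi}(x,t)|\leq\sup_{\overline{M^n}}|D\widetilde{\varphi}(\cdot,0)|\leq\rho<1$.

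Alternatively, one may run the maximum-principle argument of Lemma~\ref{Gradient} directly on \eqref{rescale 1}. Because the spatial derivatives of $\widetilde{\varphi}$ and $\varphi$ agree, one has $\widetilde{Q}=Q+1$, hence $\partial\widetilde{Q}/\partial\widetilde{\varphi}_{ij}=Q^{ij}$ and $\partial\widetilde{Q}/\partial\widetilde{\varphi}_k=Q^k$; the evolution equation for $\Phi:=\tfrac12|D\widetilde{\varphi}|^2$ is then verbatim \eqref{gra}, the two structural terms on the right-hand side are again non-positive since $Q^{ij}$ is positive definite and $R_{ijml}=\sigma_{il}\sigma_{jm}-\sigma_{im}\sigma_{jl}$ on $\mathscr{H}^{n}(1)$, and the Neumann boundary term is $\nabla_\mu\Phi=-\sum_{i,j=1}^{n-1}h_{ij}^{\partial M^{n}}\nabla_{e_i}\widetilde{\varphi}\,\nabla_{e_j}\widetilde{\varphi}\leq0$ by convexity of $\partial M^{n}$, so the parabolic maximum principle together with Hopf's lemma again forces $|D\widetilde{\varphi}|\leq\sup_{\overline{M^n}}|D\widetilde{\varphi}(\cdot,0)|$.

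There is no genuine obstacle here: the only thing one has to notice is that rescaling by a purely time-dependent factor leaves the spatial gradient untouched, after which the conclusion is immediate from Lemma~\ref{Gradient} (or from a line-by-line repetition of its proof). The single point worth stating with care is the base case at $t=0$, where one uses $\Lambda(0)=1$ to identify $\widetilde{\varphi}_0$ with $\varphi_0$, i.e.\ with a spacelike graph, which is what guarantees the bound by $\rho<1$.
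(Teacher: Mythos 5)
Your proposal is correct, and your primary argument is actually more direct than the paper's own proof. The paper proves this lemma by re-running the maximum-principle machinery of Lemma \ref{Gradient} on the rescaled equation: it sets $\widetilde{\Phi}=\tfrac12|D\widetilde{\varphi}|^{2}$, derives the analogue of \eqref{gra} with $\widetilde{Q}^{ij}$, $\widetilde{Q}^{k}$, checks $\nabla_{\mu}\widetilde{\Phi}\leq 0$ on the boundary via convexity of $\partial M^{n}$, and concludes by the parabolic maximum principle and Hopf's lemma --- i.e.\ exactly your ``alternative'' route. Your first route short-circuits all of this by the observation that the rescaling factor $\Lambda=e^{-t}$ is purely time-dependent, so $\widetilde{\varphi}=\varphi+t$, $D\widetilde{\varphi}=D\varphi$ pointwise, and $\widetilde{\varphi}(\cdot,0)=\varphi(\cdot,0)$ because $\Lambda(0)=1$; the claim is then literally Lemma \ref{Gradient} restated (valid on all of $[0,\infty)$ once $T^{\ast}=+\infty$ is known from Theorem \ref{theorem 4.9}). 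This is the reduction the paper itself hints at in the sentence ``the spatial derivatives of $\widetilde{\varphi}$ are equal to those of $\varphi$'' preceding the lemma, but does not exploit in the proof. What your shorter route buys is economy and transparency; what the paper's route buys is a self-contained verification that the rescaled operator $\widetilde{Q}$ has the same structural properties (positivity of $\widetilde{Q}^{ij}$, the sign of the boundary term), which is then reused in the convergence discussion. Both are sound, and your handling of the base case at $t=0$ is the right point to make explicit.
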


\begin{proof}
Set $\widetilde{\Phi} = \frac{|D\widetilde{\varphi}|^{2}}{2}$.
Similar to the argument in Lemma \ref{Gradient}, we can obtain
\begin{equation*}
\frac{\partial\widetilde{\Phi}}{\partial t} =
\widetilde{Q}^{ij}\widetilde{\Phi}_{ij} +
\widetilde{Q}^{k}\widetilde{\Phi}_{k} - \widetilde{Q}^{ij}\left(
\widetilde{\varphi}_{i}\widetilde{\varphi}_{j} -
\sigma_{ij}|D\widetilde{\varphi}|^{2} \right) -
\widetilde{Q}^{ij}\widetilde{\varphi}_{mi}\widetilde{\varphi}^{m}_{j},
\end{equation*}
with the boundary condition
$$\nabla_{\mu} \widetilde{\Phi} \leq 0.$$
So, we have
\begin{equation*}
\left\{
\begin{aligned}
&\frac{\partial}{\partial t}\widetilde{\Phi} \leq
\widetilde{Q}^{ij}\widetilde{\Phi}_{ij} +
\widetilde{Q}^{k}\widetilde{\Phi}_{k}  \qquad &&\mathrm{in}~
M^n\times(0,\infty)\\
&\nabla_{\mu} \widetilde{\Phi} \leq 0  \qquad &&\mathrm{on}~ \partial M^n\times(0,\infty)\\
&\widetilde{\Phi}(\cdot,0)= \frac{|D\widetilde{\varphi}(\cdot,
0)|^{2}}{2} \qquad &&\mathrm{in}~M^n.
\end{aligned}
\right.
\end{equation*}
Using the maximum principle and Hopf's Lemma, we can get the
gradient estimates of $\widetilde{\varphi}$.
\end{proof}

\begin{lemma}\label{lemma 5.2}
Let $\varphi$ be a solution of the inverse Gauss curvature flow
\eqref{Evo-1}. Then,
\begin{equation*}
\widetilde{\varphi}(\cdot, t) = \varphi(\cdot, t) + t
\end{equation*}
converges to a real number for $t\to +\infty$.
\end{lemma}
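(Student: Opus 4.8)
The plan is to show that $\widetilde\varphi(\cdot,t)$ converges, uniformly and then in $C^{\infty}(\overline{M^{n}})$, to a single constant, which is the required real number. First I would record two elementary ingredients. On one hand, $\widetilde\varphi=\varphi+t=\log(ue^{t})$, so the $C^{0}$-estimate (the bound $c_{1}\le ue^{t}\le c_{2}$) gives $\log c_{1}\le\widetilde\varphi(x,t)\le\log c_{2}$ for all $(x,t)$; in particular $\widetilde\varphi$ is uniformly bounded. On the other hand, $\dot\varphi=\partial_{t}\varphi=-v\,e^{-\widetilde\varphi}\widetilde{K}^{-1/n}<0$ is, by Lemma~\ref{Gradient}, the $C^{0}$- and $C^{2}$-estimates and \eqref{det}, both uniformly bounded and bounded away from $0$, and it solves the \emph{linear} uniformly parabolic equation $\partial_{t}\dot\varphi=Q^{ij}D_{ij}\dot\varphi+Q^{k}D_{k}\dot\varphi$ with $\nabla_{\mu}\dot\varphi=0$ (this is exactly the content of the proof of Lemma~\ref{lemma3.2}), whose leading coefficient $Q^{ij}=-\tfrac1n\dot\varphi\,\iota^{ij}$ is uniformly elliptic on $M^{n}\times[0,\infty)$ since $\iota^{ij}$ is two-sided bounded by the same estimates. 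Hence, by the parabolic maximum principle, $M(t):=\max_{M^{n}}\dot\varphi(\cdot,t)$ is non-increasing and $m(t):=\min_{M^{n}}\dot\varphi(\cdot,t)$ is non-decreasing, and by the parabolic Harnack / De Giorgi--Nash--Moser oscillation estimate for such equations with the Neumann condition on the compact manifold $M^{n}$, $\mathrm{osc}_{M^{n}}\dot\varphi(\cdot,t)\le Ce^{-ct}$ for some $c>0$. In particular $M(t)$ and $m(t)$ share a common limit $\bar c$ and $|\dot\varphi(\cdot,t)-\bar c|\le Ce^{-ct}$.

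Next, integrating $\partial_{t}\widetilde\varphi=\dot\varphi+1$ gives $\widetilde\varphi(x,t)=\widetilde\varphi(x,0)+\int_{0}^{t}\big(\dot\varphi(x,s)+1\big)\,ds$. If $\bar c+1\ne0$, the integrand would be within $Ce^{-cs}$ of the nonzero number $\bar c+1$ for large $s$, so $\widetilde\varphi(x,t)$ would escape to $\pm\infty$, contradicting the uniform bound; hence $\bar c=-1$ and $|\dot\varphi(\cdot,t)+1|\le Ce^{-ct}$. Consequently $\int_{0}^{\infty}\big(\dot\varphi(x,s)+1\big)\,ds$ converges absolutely and uniformly in $x$, so $\widetilde\varphi(\cdot,t)$ converges uniformly as $t\to\infty$ to $\widetilde\varphi_{\infty}:=\widetilde\varphi(\cdot,0)+\int_{0}^{\infty}(\dot\varphi(\cdot,s)+1)\,ds$; interpolating this $C^{0}$-decay with the uniform higher-order bounds from Theorem~\ref{theorem 4.9} (and the subsequent bootstrap), the convergence is in $C^{\infty}(\overline{M^{n}})$. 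Passing to the limit in the rescaled equation \eqref{rescale 1} and using $\partial_{t}\widetilde\varphi\to0$, the limit is stationary: $\widetilde Q(D\widetilde\varphi_{\infty},D^{2}\widetilde\varphi_{\infty})=0$ in $M^{n}$ and $\nabla_{\mu}\widetilde\varphi_{\infty}=0$ on $\partial M^{n}$.

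It remains to see that $\widetilde\varphi_{\infty}$ is constant. Put $c_{*}:=\max_{\overline{M^{n}}}\widetilde\varphi_{\infty}$, which as a constant is also a stationary solution of the Neumann problem (because $\widetilde Q(0,0)=0$). Then $w:=c_{*}-\widetilde\varphi_{\infty}\ge0$ satisfies a linear, uniformly elliptic equation $a^{ij}w_{ij}+b^{k}w_{k}=0$ with $\nabla_{\mu}w=0$ on $\partial M^{n}$, obtained by integrating $\partial\widetilde Q/\partial(\cdot)$ along the segment joining $\widetilde\varphi_{\infty}$ to $c_{*}$ (the matrices $\iota$ associated to points of this segment stay positive definite because they dominate a convex combination of $\sigma_{ij}$ and $\iota_{ij}$, and they are two-sided bounded by the a priori estimates). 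Since $w\ge0$ vanishes somewhere and the equation has no zeroth-order term, the strong maximum principle together with Hopf's lemma (at a boundary zero the condition $\nabla_{\mu}w=0$ rules out the Hopf alternative) forces $w\equiv0$. Hence $\widetilde\varphi_{\infty}\equiv c_{*}$ is a real number, which is exactly the assertion; combined with the uniform convergence this also records that $\mathrm{osc}_{M^{n}}\widetilde\varphi(\cdot,t)\to0$.

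The main obstacle is the second half of the first step: one must use \emph{all} the a priori estimates of Sections~\ref{se4}--\ref{sec5} to make the $\dot\varphi$-equation uniformly parabolic on the whole half-line, and then invoke the correct Neumann form of the parabolic Harnack inequality to obtain the \emph{exponential} decay of $\mathrm{osc}\,\dot\varphi$ — mere convergence of $M(t)$ and $m(t)$ is not enough, since one genuinely needs a rate to conclude that $\int_{0}^{\infty}(\dot\varphi+1)\,ds$ converges. The remaining ingredients (monotonicity of $M(t),m(t)$, identification of the limit as $-1$, and the uniqueness of stationary solutions via the linearized operator) are routine maximum-principle arguments.
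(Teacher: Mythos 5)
Your proposal is correct, but it follows a genuinely different (and far more explicit) route than the paper. The paper's own proof of this lemma is essentially a citation: the $C^{0}$-estimate, the Evans--Krylov theorem (applicable because $\widetilde{Q}$ in \eqref{rescale 1} is concave and uniformly parabolic thanks to the estimates of Sections \ref{se4}--\ref{sec5}), and parabolic Schauder theory are invoked to give uniform $C^{\infty}$-bounds for $\widetilde{\varphi}$, and the convergence to a real number is then asserted as the standard Gerhardt--Marquardt-type conclusion, with the actual convergence mechanism left implicit. You instead work with $\dot{\varphi}$: it solves the linear uniformly parabolic Neumann problem appearing in the proof of Lemma \ref{lemma3.2}, whose coefficients are uniformly controlled on $[0,\infty)$ by the $\dot{\varphi}$-estimate, the gradient estimate \eqref{Gra-est}, the $C^{2}$-estimate of Theorem \ref{$C^{2}$ estimate} and \eqref{det}; the Krylov--Safonov/Harnack oscillation estimate in its conormal-boundary version (available in Lieberman's book \cite{GM}, which the paper already uses for Theorem \ref{theorem 4.9}) gives exponential decay of $\mathrm{osc}\,\dot{\varphi}$, the uniform bound on $\widetilde{\varphi}$ pins the common limit of $\max\dot\varphi$ and $\min\dot\varphi$ at $-1$, integration in time gives uniform convergence of $\widetilde{\varphi}(\cdot,t)$ with a rate, interpolation with the uniform higher-order bounds upgrades this to $C^{\infty}$, and your strong-maximum-principle/Hopf argument for the limiting stationary Neumann problem (with the correct observation that the matrices $\iota[\psi_{s}]$ along the segment dominate a convex combination of $\sigma_{ij}$ and $\iota_{ij}[\widetilde{\varphi}_{\infty}]$, hence stay positive definite) forces the limit to be a constant. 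What your route buys is exactly what the paper leaves unsaid: a concrete reason why the full limit exists, why it is spatially constant, and even an exponential rate of convergence of $\dot{\varphi}+1$; what it costs is the boundary Harnack/oscillation estimate for oblique-derivative problems, a nontrivial but standard and citable input. Since both arguments rest on the same a priori estimates, your proof can fairly be read as a completion of the paper's two-line sketch rather than a departure from its framework.
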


\begin{proof}
Using Lemma \ref{$C^{0}$ estimate}, the Evans-Krylov theorem (see
\cite{Eck,LCEvans}) and thereafter the parabolic Schauder estimate,
we can prove this lemma.
\end{proof}

So, we have:

\begin{theorem}\label{theorem 5.3}
The rescaled flow
\begin{equation*}
\frac{d\widetilde{X}}{dt} = \widetilde{K}^{-\frac{1}{n}}\nu +
\widetilde{X}
\end{equation*}
exists for all time and the leaves converge in $C^{\infty}$ to a
piece of the spacelike graph of some positive constant function
defined over $M^{n}$, i.e. a piece of hyperbolic plane of center at
origin and prescribed radius.
\end{theorem}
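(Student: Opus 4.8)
The plan is to combine the long-time existence already obtained with the uniform-in-time a priori estimates, pass to a subsequential $C^\infty$ limit, and then pin the limit down using Lemma \ref{lemma 5.2}. Long-time existence of the rescaled flow is immediate: at each fixed time the map $\widetilde X=\Lambda^{-1}X$ is just a homothety, so \eqref{rescale 1} and \eqref{Evo-1} have the same maximal existence interval, which is $[0,+\infty)$ by Theorem \ref{theorem 4.9}. For the estimates, note that $\widetilde u=ue^{t}$ lies between $c_1$ and $c_2$ by the $C^0$ estimate (Lemma \ref{$C^{0}$ estimate}), so $\log c_1\le\widetilde\varphi\le\log c_2$ for all $t$; Lemma \ref{lemma 5.1} gives $|D\widetilde\varphi|\le\rho<1$; and since the spatial derivatives of $\widetilde\varphi$ coincide with those of $\varphi$, the bound of Theorem \ref{$C^{2}$ estimate} yields a uniform bound on $|D^2\widetilde\varphi|$. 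On this range of $(D\widetilde\varphi,D^2\widetilde\varphi)$ the operator $\widetilde Q$ is uniformly parabolic and concave, so the Krylov--Safonov and Evans--Krylov theory furnishes a uniform $C^{2,\alpha}$ bound, and then differentiating \eqref{rescale 1} and applying parabolic Schauder estimates (exactly as in the proof of Theorem \ref{theorem 4.9}) bootstraps this to uniform $C^{k,\alpha}(\overline{M^n}\times[0,\infty))$ bounds for every $k$.

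Granting these bounds, the family $\{\widetilde\varphi(\cdot,t)\}_{t\ge 0}$ is precompact in $C^{k}(\overline{M^n})$ for every $k$. By Lemma \ref{lemma 5.2}, $\widetilde\varphi(\cdot,t)=\varphi(\cdot,t)+t$ converges pointwise, as $t\to+\infty$, to a constant $\widetilde\varphi_\infty\in\mathbb{R}$; combined with the uniform $C^1$ bound this convergence is uniform. Hence every $C^\infty$ subsequential limit of $\{\widetilde\varphi(\cdot,t_j)\}$ must equal the constant $\widetilde\varphi_\infty$, so by a standard interpolation argument the whole family converges in $C^\infty(\overline{M^n})$ to $\widetilde\varphi_\infty$; in particular the homogeneous Neumann condition $\nabla_\mu\widetilde\varphi=0$ passes to the limit. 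Setting $r_\infty:=e^{\widetilde\varphi_\infty}>0$ we get $\widetilde u(\cdot,t)\to r_\infty$ in $C^\infty$, so in the time-cone coordinates the rescaled leaves $\widetilde X_t(x)=\widetilde u(x,t)\,x$ converge smoothly to $\{r_\infty x:x\in M^n\}$, whose induced metric is $r_\infty^2 g_{\mathscr H^n(1)}$; that is, a piece of the hyperbolic plane of center the origin and radius $r_\infty$, which is exactly the spacelike graph over $M^n$ of the positive constant $r_\infty$. Correspondingly $\widetilde K^{-1/n}\nu+\widetilde X\to 0$ in $C^\infty$, consistent with the limit being stationary for the rescaled flow.

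Most of the real difficulty lies behind us: the genuinely hard step was the $C^2$ estimate of Section \ref{sec5} (together with the interior and boundary estimates it rests on), since that is what makes the operator amenable to the Krylov--Safonov machinery. Within the present theorem the only point requiring a little care is verifying that Lemma \ref{lemma 5.2} really forces the limit to be a \emph{constant} function --- equivalently that $\mathrm{osc}_{M^n}\widetilde\varphi(\cdot,t)\to 0$ --- rather than merely that $\widetilde\varphi(\cdot,t)$ stays bounded; once that is in hand, upgrading the $C^0$ convergence to $C^\infty$ by interpolation with the uniform higher-order bounds, and reading off the geometric statement in the cone coordinates, are routine.
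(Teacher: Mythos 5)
Your proposal is correct and follows essentially the same route as the paper, which states Theorem \ref{theorem 5.3} as a direct consequence of the uniform estimates, the uniform parabolicity and concavity of $\widetilde{Q}$, the rescaled gradient estimate (Lemma \ref{lemma 5.1}), and the convergence of $\widetilde{\varphi}$ to a real number (Lemma \ref{lemma 5.2}) obtained via Evans--Krylov and parabolic Schauder theory. Your write-up simply fleshes out this assembly (homothety argument for long-time existence, uniform $C^{k,\alpha}$ bounds, compactness plus identification of the constant limit, and the geometric reading in cone coordinates), which is exactly the intended argument.
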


\vspace{5mm}

\section*{Acknowledgments}
This work is partially supported by the NSF of China (Grant Nos.
11801496 and 11926352), the Fok Ying-Tung Education Foundation
(China) and  Hubei Key Laboratory of Applied Mathematics (Hubei
University).

\vspace {5mm}

\end{document}